\spnewtheorem*{nota}{Notation}{\it}{\rm}
\newcommand{\Span}[1]{\operatorname{span}\set{#1}}
\DeclareFontFamily{OMX}{yhex}{}
\DeclareFontShape{OMX}{yhex}{m}{n}{<->yhcmex10}{}
\DeclareSymbolFont{yhlargesymbols}{OMX}{yhex}{m}{n}
\DeclareMathAccent{\wip}{\mathord}{yhlargesymbols}{"F3}
\DeclareMathOperator{\arccosh}{arccosh}
\DeclareMathOperator{\dev}{\mathrm{dev}}
\begin{document}

\title{Parametrisation of decorated Margulis spacetimes using strip deformations
}


\author{Pallavi Panda}


\institute{Pallavi Panda \at
              University of Luxembourg\\
              \email{pallavipanda04@gmail.com} }

\date{Received: date / Accepted: date}

\maketitle

\begin{abstract}
Danciger-Guéritaud-Kassel parametrised the moduli space of Margulis spacetimes, with a fixed convex cocompact linear part. The parametrisation is done by gluing infinitesimal hyperbolic strips along a family of embedded, pairwise disjoint finite geodesic segments of the hyperbolic surface that decompose it into topological disks. We generalise this result to complete finite-area crowned hyperbolic surfaces with their spikes decorated with horoballs, by using hyperbolic as well as parabolic strips. This gives a parametrisation of Margulis spacetimes, decorated with finitely many pairwise disjoint affine light-like lines, called photons.
\keywords{Strip deformations \and arc complex \and Margulis spacetimes}
\end{abstract}

\section{Introduction}
In this paper, we give a parametrisation of the moduli space of a Margulis spacetime decorated by finitely many affine light-like lines. This is done by using strip deformations of the underlying hyperbolic surface. In the first part of this section, we recall the historical context and the recent developments that serve as the motivation and groundwork for our results. In the second part, we summarise our main results.
\label{intro}
\subsection{Historical context}
Bieberbach proved (1910-1912) that any group $\Gamma$ of affine isometries of the $n$-dimensional Euclidean space $\R^n$ that acts properly discontinuously on $\R^n$ contains a finite-index subgroup isomorphic to $\Z^m$, $m\leq n$. Moreover, the quotient $\R^n/\Ga$ is compact if and only if $m=n$. In 1964, Auslander proposed the following conjecture:

	If $\Gamma \subset \operatorname{GL}(n,\R)\ltimes \R^n$ is a finitely generated group that acts on $\R^n$ properly discontinuously and cocompactly, then $\Gamma$ is virtually solvable. 

The conjecture has been proven to be true up to $n=6$. In 1977, Milnor \cite{milnor} asked whether the conjecture remains true if the cocompactness condition is dropped. 
Meanwhile, in 1972, Tits proved that: 

	Let $\Gamma\subset\mathrm{GL}(n, \mathbb F)$ be a finitely generated group, where $\mathbb F$ is a field. Then $\Gamma$ is either virtually solvable or it contains a free group of rank >1. 

Hence the Tits alternative implies that the answer to Milnor's question is negative if and only if there exists a properly discontinuous affine action of a free group (of rank >1).

\paragraph{Margulis spacetimes.} In 1983, Margulis came up with examples for $n=3$. These were complete non-compact Lorentzian manifolds, called \emph{Margulis spacetimes}, obtained as a quotient of the (2,1)-Minkowski space $\Min$ by a free group $\Ga$, acting properly discontinuously by orientation-preserving affine isometries. The group of orientation-preserving affine isometries of $\Min$ is given by $\so\ltimes \R^3$. The special linear group $\so$ is the isometry group of the two-dimensional hyperbolic space $\HP$, which lies in the Minkowski space. Its Lie algebra $\mathfrak{so}_{2,1}$, equipped with its Killing form, is isomorphic to $\Min$. The linear action of $\so$ on $\Min$ coincides with its adjoint action on $\mathfrak{so}_{2,1}$. Consequently, the tangent bundle $\mathrm T(\so)$ is isomorphic to $\so\ltimes \R^3$. We shall denote by $G$ the isomorphic groups $\so,\pgl$ and by $\lalg$, their Lie algebra. Abels-Margulis-Soifer \cite{amssemi}, \cite{amszariski} generalised these examples to higher dimension by showing the existence of properly discontinuous action of non-abelian free Zariski-dense subgroups of $\mathrm{SO}^0(n-1,n)$, with $n$ even. They also showed that such actions cannot exist when $n$ is odd. Margulis introduced an invariant (see Section \ref{mst}), which was later named after him, to detect whether a given action on $\Min$ is proper. This invariant can be interpreted as the first length derivative of a closed geodesic. He proves that (\emph{Opposite Sign Lemma}) the sign of the Margulis invariant must remain constant for a proper action. 
\paragraph{Affine deformations.} Consider the representation $\rho_0:\Ga \hookrightarrow G\ltimes \lalg\simeq \mathrm T(G)$ of a discrete not virtually solvable $\Ga$ acting properly discontinuously on $\Min$, like in the examples of Margulis. Fried and Goldman \cite{fried} proved that by projecting $\Ga$ onto its first coordinate, we virtually get the holonomy representation $\rho: \fg S \rightarrow G$ of a finite-type complete hyperbolic surface $S$. The projection onto the second coordinate $u:\Gamma\rightarrow\lalg$ is a $\rho$-cocycle: for every $\ga,\ga'\in \Ga$, $u$ satisfies $u(\ga\ga')=u(\ga)+\rho(\ga)\cdot u(\ga')$. It gives an infinitesimal deformation of $\rho$. The group $\Gamma$ can thus be written as $\Gamma^{(\rho,u)}:=\{(\rho(\ga), u(\ga)) \mid \ga\in \fg S \}$, which gives an \emph{affine deformation} of $\rho$. 
In the paper \cite{glm}, the authors Goldman, Labourie and Margulis have studied affine deformations of free, discrete subgroups of $G$. An infinitesimal deformation $u$ of $\rho$ is said to be \emph{proper} if $\Gamma^{(\rho,u)}$ acts properly discontinuously on $\Min$.
They \cite{glm} prove that for $\rho$ convex cocompact, the corresponding $u$ is proper if and only if $u$ or $-u$ uniformly lengthens all closed geodesics: 
\begin{equation}\label{ul}
	\inf_{\ga\in \Gamma\smallsetminus \{Id\}} \frac{\mathrm{d}l_{\ga}(\rho)(u)}{l_{\ga}(\rho)} >0
\end{equation} where $l_{\ga}$ is the length function.
In both cases, the set of all such infinitesimal deformations forms an open, convex cone; the cone corresponding to the first case is called the \emph{admissible cone}. This was proved by using a diffused version of the Margulis invariant, which now measures the variations of geodesic currents. 

\paragraph{Strip deformations of compact surfaces.} Danciger-Gu\'eritaud-Kassel \cite{dgk}
study admissible deformations of finite-type hyperbolic surfaces with non-empty boundary and without punctures, using \emph{strip deformations} (Section \ref{sd}), first introduced by Thurston in \cite{thurston}. This is done by cutting the surface along a non-trivial embedded arc, with endpoints on the boundary, and gluing a hyperbolic strip along it. Using the isotopy classes of these arcs, one can construct a simplicial complex called the \emph{arc complex} (Definition \ref{ac}) which depends only on the topology of the surface. Harer \cite{harer} has proved that a certain open dense subset, called the \emph{pruned arc complex} (Definition \ref{pac}) of the arc complex is homeomorphic to an open ball of dimension one less than that of the deformation space. The authors uniquely realised an admissible deformation of the surface by performing strip deformations along families of pairwise disjoint finite arcs, corresponding to a point in the pruned arc complex. 

Drumm \cite{drumm} constructed fundamental domains of some Margulis spacetimes with a convex cocompact linear part using specially crafted piecewise linear surfaces called \emph{crooked planes}. The Crooked Plane Conjecture says that every Margulis spacetime is amenable to such a treatment. Charette, Drumm and Goldman proved this conjecture for rank two free groups in \cite{cdg3}. The general case (with convex cocompact linear part) follows from \cite{dgk} which provides a dictionary between strip deformations and crooked planes. More generally, Smilga \cite{SmilgaFD} gave a construction of fundamental domains for the action of the Abesl-Margulis-Soifer subgroups mentioned above.


\subsection{Main results of the paper}
\paragraph{Surfaces with decorated spikes.}  Crowned surfaces are complete finite-area non-compact surfaces that can be seen as limits of a compact surface with convex polygonal boundary where the vertices (called \emph{spikes}) become ideal.  These were first introduced by Thurston \cite{thurston} as the complement of a geodesic lamination on a closed hyperbolic surface.

Our main aim is to generalise the parametrisation result of Danciger--Gu\'eritaud--Kassel to crowned hyperbolic surfaces whose spikes are decorated with pairwise disjoint horoballs. These decorated surfaces were first introduced by Penner in his study of Decorated Teichmüller Theory \cite{Pennerbordered}, \cite{Pennerpunc}.  A \emph{horoball connection} is a geodesic arc on the surface that joins two decorated spikes. Its length is given by the geodesic segment intercepted by the two horoballs decorating its endpoints.  Penner defined the \emph{lambda length} of a horoball connection and used these lengths to get coordinates to the decorated Teichmuller space. The lambda lengths were one of the primary motivations for the development of the field of cluster algebra. Furthermore, using the arc complex, he gave a cell-decomposition of the decorated Teichmuller space of the surface. 

We define the \emph{admissible cone} of a hyperbolic surface with decorated spikes to be the set of all infinitesimal deformations that uniformly lengthen every horoball connection and every closed geodesic. More precisely, an element $(m,v)$ in the tangent space over a decorated metric $m$ is admissible if and only if it satisfies:
\begin{equation*}\label{ulhc}
	\inf_{\be\in \mathcal{H}}\frac{\mathrm{d}l_{\be}(m)(v)}{l_{\be}(v)} >0,
\end{equation*} where $\mathcal{H}$ is the set of all horoball connections and closed geodesics. 

On the decorated surface, we consider more arcs than in the compact case. In addition to the arcs already mentioned, we allow two new types: finite arcs that are isotopic to a horoball neighbourhood of a spike, and infinite arcs that are embeddings of $[0,\infty)$ such that the finite end is on the boundary and the infinite end converges to a spike. This time the pruned arc complex is defined to be the subspace of the arc complex formed by taking the union of all those simplices $\sigma$ such that the arcs corresponding to the 0-skeleton of $\sigma$ decompose the surface into topological disks with at most one decorated spike. We prove that the pruned arc complex is an open ball for the crowned surfaces and for the decorated surfaces with spikes using Harer's result \cite{harer}.

The strip added along an infinite arc is the region in $\HP$ bounded by two geodesics with the spike as the common endpoint. A strip deformation along a finite arc is defined as in the previous case. 

The main result of this paper gives a parametrisation of the admissible cone of a surface with decorated spikes using the strip deformations:
\begin{theorem} 
	Let $\sh$ be a hyperbolic surface with decorated spikes equipped with a decorated metric $m\in \tei\sh $. Let $\sac \sh$ be its pruned arc complex. Choose $m$-geodesic representatives from the isotopy classes of arcs. Then, the projectivised infinitesimal strip map $\mathbb{P}f:\sac \sh \longrightarrow \ptan \sh$ is a homeomorphism onto its image  $\mathbb{P}^+(\adm m)$, where $\adm m$ denotes the admissible cone over $m$.
\end{theorem}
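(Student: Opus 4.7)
The plan is to adapt the Danciger--Guéritaud--Kassel strategy for compact convex cocompact surfaces to the decorated crowned setting, broken into the following steps.

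First, I would verify that the (non-projectivised) strip map $f$ actually lands in the admissible cone $\adm m$. For each closed geodesic $\be$ and each horoball connection $\be\in\mathcal{H}$, I would compute the infinitesimal length variation induced by a strip deformation of width vector $(w_\alpha)_\alpha$. The derivative $\mathrm{d}l_\be(m)(f(w))$ should decompose as a positive linear combination of the $w_\alpha$'s indexed by the geometric intersection points of $\be$ with the arcs $\alpha$, with coefficients given by trigonometric quantities depending on the crossing angle (for hyperbolic strips across closed geodesics and finite horoball connections) or by a suitable exponential weight (for parabolic strips attached at a spike, where the contribution to a horoball connection terminating at that spike is governed by the horocyclic displacement). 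The pruned-arc condition guarantees that every $\be\in\mathcal{H}$ meets at least one arc of the triangulation, so $\mathrm{d}l_\be(m)(f(w))/l_\be(m)$ is bounded below by a positive constant uniform in $\be$; this yields admissibility and in particular projectivisation is well-defined.

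Second, I would prove continuity of $f$ (and hence of $\mathbb Pf$) by a direct inspection of the gluing construction, then establish \emph{injectivity} of $\mathbb Pf$, which I expect to be the main technical obstacle. Following the DGK philosophy, the widths $w_\alpha$ can be recovered (up to global rescaling) from the length derivatives of a sufficiently rich family of test arcs or loops dual to the triangulation; in the decorated case one must enlarge this family to include horoball connections dual to the new arcs (spike-bounded infinite arcs and spike-encircling finite arcs). The delicate point is near the spikes: parabolic strip deformations produce length variations with a different asymptotic behaviour than hyperbolic ones, and I would need a local model near a decorated spike showing that the contributions of an infinite arc and of a nearby finite arc encircling the same horoball are linearly independent as functionals on the width variables. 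Once injectivity is in hand, continuity of the inverse on compacta of $\sac \sh$ follows from the same computation.

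Third, I would handle the topological conclusion by combining two ingredients. On the one hand, the generalised Harer result cited in the introduction says that $\sac \sh$ is an open ball whose dimension equals $\dim \tei\sh - 1$, which matches $\dim \mathbb P^+(\adm m)$. On the other hand, $\adm m$ is an open convex cone in $\mathrm T_m\tei\sh$ by the decorated analogue of the Goldman--Labourie--Margulis criterion, so $\mathbb P^+(\adm m)$ is an open ball of the same dimension. An invariance-of-domain argument applied to the injective continuous map $\mathbb P f$ between manifolds of equal dimension then shows that $\mathbb P f$ is an open embedding.

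Finally, for surjectivity onto $\mathbb P^+(\adm m)$, I would argue by a connectedness and properness argument: the image is open in $\mathbb P^+(\adm m)$ by the previous step, and it is also closed, because any sequence of projective width data escaping to the boundary of $\sac \sh$ corresponds to triangulations whose arc families degenerate, forcing at least one horoball connection or closed geodesic in $\mathcal H$ to have vanishing normalised length derivative, hence leaving the admissible cone. Since $\mathbb P^+(\adm m)$ is connected and the image is non-empty, open and closed, they coincide. Together with the open embedding property, this yields the claimed homeomorphism $\mathbb P f:\sac \sh \xrightarrow{\ \sim\ }\mathbb P^+(\adm m)$.
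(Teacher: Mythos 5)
Your overall scaffolding (admissibility of the image, the Harer-type statement that $\sac\sh$ is an open ball of dimension $\dim\tei\sh-1$, convexity of $\adm m$, and a properness-type argument) matches ingredients of the paper, but the load-bearing step of your argument --- a \emph{direct global} proof of injectivity of $\mathbb{P}f$ by recovering the widths from length derivatives of a dual test family --- is a genuine gap, and it is precisely the step that neither this paper nor Danciger--Gu\'eritaud--Kassel attempts. Recovering widths from length derivatives is only meaningful within the interior of a single simplex of $\sac\sh$, where $f$ is linear in the weights; there it amounts to showing that the strip deformations along the arcs of one triangulation form a basis of $\tang\sh$ (Theorem \ref{codim0deco}, proved via tile maps and the longitudinal-motion argument of Lemma \ref{longi}). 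Two points of $\sac\sh$ lying in different simplices are supported on different, mutually intersecting arc systems, and your proposal exhibits no family of test curves separating their images, nor is one known. The paper sidesteps global injectivity entirely: it proves that $\mathbb{P}f$ is a \emph{local} homeomorphism (codimensions $0$, $1$, $2$ treated separately, the latter two quoted from \cite{ppstrip}) and that it is \emph{proper}; a proper local homeomorphism between manifolds is a covering map, and a covering between open balls of the same dimension is a homeomorphism. Your invariance-of-domain step should be replaced by this covering-space argument, with properness promoted from an auxiliary closedness device to one of the two pillars of the proof.

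A secondary gap: your closedness sketch only treats degenerating sequences whose supports miss some curve or horoball connection in the limit (the paper's Case 1 of Theorem \ref{properdeco}). When the supports remain filling but contain arcs of unbounded length, no curve is missed; instead one must produce, for each long arc, a closed geodesic meeting the whole arc system at uniformly small angles (Lemma \ref{maxangle}), so that the normalised length derivative tends to $0$ via the $\sin$ factor in the derivative formula combined with the upper bound of Lemma \ref{ineq}. One must also check separately that $f(x_n)$ stays bounded away from $\vec 0$, using boundary components of the surface; otherwise approaching $\partial\adm m$ does not imply leaving every compact subset of $\mathbb{P}^+(\adm m)$.
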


In \cite{ppstrip}, we proved the above parametrisation result for the decorated ideal polygons, and decorated once-punctured polygons. Their arc complexes are finite, contrary to the bigger surfaces, so some of the methods used in the proofs there are different than those in this paper. 

\paragraph{Decorated Margulis Spacetimes.}
We interpret admissible deformations of surfaces with decorated spikes as Margulis spacetimes with a certain type of decoration by pairwise disjoint lightlike lines (photons), one photon per decorated spike. The photons in the decoration all have the same \emph{handedness} (see Section \ref{hand}) which translates to having the same sign for a quantity for every pair of photons. This can be viewed as analogous to the opposite sign Lemma of Margulis. In this context, the above theorem provides fundamental domains of the Margulis spacetimes, adapted to the photons. We prove the following theorem:

\begin{theorem}\label{dms}
	Let $\sh$ be a hyperbolic surface with decorated spikes and let $\rho:\fg {\sh} \rightarrow G$ be a holonomy representation.
	Let $\mst$ be the space of all decorated Margulis spacetimes with convex cocompact linear part as $\rho$. Then there is a natural bijection $\Psi: \sac\sh \rightarrow\mst$ induced by the projectivised strip map.
\end{theorem}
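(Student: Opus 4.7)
My plan is to factor the claimed bijection $\Psi$ as the composition
\[
\sac\sh \xrightarrow{\mathbb{P}f} \mathbb{P}^+(\adm m) \xrightarrow{\Phi} \mst,
\]
where the first arrow is the projectivised infinitesimal strip map treated in the previous theorem and $\Phi$ is a natural identification between the projectivised admissible cone and the moduli of decorated Margulis spacetimes with linear part $\rho$. Since the previous theorem already supplies a homeomorphism for $\mathbb{P}f$, the work reduces to constructing $\Phi$ and checking that it is a bijection.

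The construction of $\Phi$ goes through the Fried--Goldman dictionary: an affine deformation of $\rho$ is encoded by a $\rho$-cocycle $u:\fg{\sh}\to\lalg$, and by the Goldman--Labourie--Margulis criterion \cite{glm} the action of $\Gamma^{(\rho,u)}$ on $\Min$ is proper iff $u$ (or $-u$) uniformly lengthens every closed geodesic. In the decorated setting one must incorporate the datum of one photon per spike of common handedness. Here I would argue, via the geometric dictionary summarised in Section~\ref{hand}, that each such photon corresponds canonically to an infinitesimal horoball decoration at the associated spike, and that the common-handedness condition picks out one component of the admissible cone rather than its negative (an analogue of Margulis's Opposite Sign Lemma). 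Consequently a decorated Margulis spacetime corresponds exactly to a cocycle $u\in\adm m$, which by definition uniformly lengthens every closed geodesic \emph{and} every horoball connection.

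It remains to see that this correspondence descends to projectivisation and is a bijection. Rescaling $u$ by any $\lambda>0$ rescales the cocycle, the positions of the photons, and all lengthening rates by the same factor, so it defines the same element of $\mst$; this yields a well-defined map $\Phi:\mathbb{P}^+(\adm m)\to\mst$. Surjectivity is then a direct consequence of the above dictionary and the convex cocompactness of $\rho$, while injectivity follows because the pair (cocycle up to positive scalar, photon decoration) is determined by the underlying decorated spacetime. Composing with $\mathbb{P}f$ yields the desired bijection $\Psi$, manifestly induced by the projectivised strip map.

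The main obstacle is the middle paragraph: establishing the natural correspondence between horoball decorations on $\sh$ and photons in $\Min$, together with the equivalence between the common-handedness condition on those photons and membership in the positive cone $\adm m$. This is where the decorated machinery of the paper is essential and where the novelty beyond \cite{dgk} lies, since the Goldman--Labourie--Margulis characterisation of properness must be refined so that uniform lengthening of horoball connections reflects admissibility of the photon decoration. I expect this compatibility check—rather than any purely algebraic part of the argument—to be the technical heart of the proof.
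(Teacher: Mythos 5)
Your proposal follows essentially the same route as the paper: the paper also factors $\Psi$ through the homeomorphism $\mathbb{P}f:\sac\sh\to\mathbb{P}^+(\adm m)$ of Theorem \ref{maindec} and then identifies admissible deformations with decorated spacetimes via the dictionary "photon at a spike $=$ set of Killing fields realising a given infinitesimal motion of the decorating horoball," with pairwise disjointness of the photons corresponding to nonzero length variation of horoball connections and common handedness corresponding to uniform lengthening. The step you flag as the technical heart is carried out in the paper exactly along the lines you sketch, made concrete through tile maps (the photon at the spike $x_i=[\mathbf v_i]$ is $\phi(d_i)+\R\mathbf v_i$) and the explicit handedness computation of Section \ref{hand}.
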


The paper is structured into sections in the following way:
Sections  \ref{prelim}-\ref{surfspike} recapitulate the necessary vocabulary from hyperbolic, Lorentzian and projective geometry, and introduces every type of surface mentioned above along with their deformation spaces and admissible cones. In Section \ref{arc}, we discuss the arcs and the arc complexes of the different types of surfaces and study their topology. Section \ref{sd} gives the definitions of the various strip deformations along different types arcs and some estimations that will be required in the proofs. We also give a recap of the main steps of the proof of their main result in \cite{dgk}. Section \ref{main} contains the proof of our main parametrisation theorem for surfaces with decorated spikes. Finally, in Section \ref{dmst} we introduce decorated Margulis spacetimes and give the proof of Theorem \ref{dms}.

\section{Preliminaries}\label{prelim}
In this section we recall the necessary vocabulary and notions and also prove some results in hyperbolic geometry that will be used in the rest of the paper.
\subsection{Minkowski space $\Min$}
\begin{definition}
	The \emph{Minkowski space} $\R^{2,1}$ is the affine space $\R^3$ endowed with the quadratic form $\norm \cdot$ of signature $(2,1)$: \[\text{for } v=(x_1,x_2,x_3)\in \R^3,\quad \norm v ^2=x_1^2+x_2^2-x_3^2.\] 
	
\end{definition}
There is the following classification of points in the Minkowski space: a non-zero vector $\mathbf{v}\in \Min$ is said to be
\begin{itemize}
	\item \emph{space-like} if and only if $\norm {\mathbf v}^2>0$,
	\item \emph{light-like} if and only if $\norm {\mathbf v}^2=0$,
	\item \emph{time-like} if and only if $\norm {\mathbf v}^2<0$.
\end{itemize}
A vector $\mathbf v$ is said to be \emph{causal} if it is time-like or light-like. A causal vector $\mathbf v=(x,y,z)$ is called \emph{positive} (resp.\ \emph{negative}) if $z>0$ (resp.\ $z<0$). Note that by definition of the norm, every causal vector is either positive or negative. 
The set of all light-like points forms the \emph{light-cone}, denoted by $$L:=\{\mathbf v=(x,y,z)\in \Min \mid x^2+y^2-z^2=0\}.$$
The \emph{positive} (resp.\ \emph{negative}) cone is defined as the set of all positive (resp.\ \emph{negative}) light-like vectors.
\paragraph{Subspaces.} A vector subspace $W$ of $\Min$ is said to be 
\begin{itemize}
	\item \emph{space-like} if $W\cap C=\{(0,0,0)\}$,
	\item \emph{light-like} if $W\cap C=\Span {\mathbf v}$ where $\mathbf v$ is light-like,
	\item \emph{time-like} if $W$ contains at least one time-like vector.
\end{itemize}
A subspace of dimension one is going to be called a line and a subspace of dimension two a plane. The adjective "affine" will be added before the words "line" and "plane" when we are referring to some affine subspace of the corresponding dimension.
\paragraph{Duals.} Given a vector $\mathbf {v}\in\Min$, its dual with respect to the bilinear form of $\Min$ is denoted $\bf v^{\perp}$. For a light-like vector $\bf v$, the dual is given by the light-like hyperplane tangent to $C$ along $\Span{\bf v}$. For a space-like vector $\bf v$, the dual is given by the time-like plane that intersects $C$ along two light-like lines, respectively generated by two light-like vectors $\bf v_1$ and $\bf v_2$ such that $\Span {\bf v}=\bf v_1^{\perp}\cap \bf v_2^{\perp}$. Finally, the dual of a time-like vector $\bf v$ is given by a space-like plane. One way to construct it is to take two time-like planes $W_1,W_2$ passing through $\bf v$. Then the space $\bf v^\perp$ is the vectorial plane containing the space-like lines $W_1^\perp$ and $W_2^\perp$.

\subsection{The different models of the hyperbolic 2-space}
In this section we recall some vocabulary and introduce notations related to the different models for the hyperbolic plane, that will be used in the calculations and proofs later.
\paragraph{Hyperboloid model.} The classical hyperbolic space of dimension two $\HP$ can be identified with the upper sheet of the two-sheeted hyperboloid  $\{\mathbf v=(x,y,z)\in\Min \mid \norm{\mathbf v}^2=-1\},$ along with the restriction of the bilinear form. It is the unique (up to isometry) complete simply-connected Riemannian 2-manifold of constant curvature equal to -1. Its isometry group is isomorphic to $\so$ and the identity component $\sop$ of this group forms the group of its orientation-preserving isometries; they preserve each of the two sheets of the hyperboloid individually. If the hyperbolic distance between two points $\mathbf u,\mathbf v\in \HP$ is denoted by $d_{\HP}(\mathbf u,\mathbf v)$, then $\cosh d_{\HP}(\mathbf u,\mathbf v)= -\bil {\mathbf u }{\mathbf v}$. The geodesics of this model are given by the intersections of time-like hyperplanes with $\HP$.

\paragraph{Klein's disk model.} This model is the projectivisation of the hyperboloid model. Let $\mathbb{P}:\Min\smallsetminus \{\mathbf 0\} \longrightarrow \pp$ be the projectivisation of the Minkowski space. The projective plane $\pp$ can be considered as the set $A \cup \mathbb{RP}^1$, where $A:=\set{(x,y,1)\,|\,x,y\in \R}$ is an affine chart and the one-dimensional projective space represents the line at infinity, denoted by $\pli$. The $\p$-image of a point $\mathbf v\in \Min$ is denoted by $[\mathbf v]$. A line in $A$, denoted by $\pl$, is defined as $A\cap V$ where $V$ is a two-dimensional vector subspace of $\Min$, not parallel to $A$. 

In the affine chart $A$, the light cone is mapped to the unit circle and the hyperboloid is embedded onto its interior.  This is the Klein model of the hyperbolic plane; its boundary a circle. This model is non-conformal. The geodesics are given by open finite Euclidean straight line segments, denoted by $l$, lying inside $\HP$, such that the endpoints of the closed segment $\pls$ lie on $\HPb$. 
The distance metric is given by the Hilbert metric 
$d_{\HP}(w_1,w_2)=\frac{1}{2}\log [p,w_1;w_2,q]$, where $p$ and $q$ are the endpoints of $\pls$, $l$ being the unique hyperbolic geodesic passing through $w_1,w_2\in \HP$, and the cross-ratio $[a,b;c,d]$ is defined as $\frac{(c-a)(d-b)}{(b-a)(d-c)}$. The group of orientation-preserving isometries is identified with $\mathrm{PSU}(1,1)$. A point $p$ is called \emph{real} (\emph{ideal}, \emph{hyperideal}) if $p\in \HP$ (resp. $p\in \HPb$, $p\in \pl\cup A\backslash \cHP$).

The dual of $\pli$ is the point $(0,0,1)$ in $A$.
The dual of any other projective line $\pl=A\cap V$ is given by the point $A\cap V^{\perp}$. The dual $p^{\perp}$ of a point $p\in \pp$ is the projective line $A\cap {\Span p}^{\perp}$. If $l$ is a hyperbolic geodesic, then $l^{\perp}$ is defined to be $\pl^{\perp}$; it is given by the intersection point in $\pp$ of the two tangents to $\HPb$ at the endpoints of $\pls$.

\emph{Notation:} We shall use the symbol $\cdot^{\perp}$ for referring to the duals of both linear subspaces as well as their projectivisations.

\paragraph{Upper Half-plane Model.} The subset $\{z=x+iy\in \C \,| \,y>0\}$ of the complex plane is the upper half-space model of the hyperbolic space of dimension 2.
The geodesics are given by semi-circles whose centres lie on $\R$ or straight lines that are perpendicular to $\R$. We shall call the former as \emph{horizontal} and the latter as $vertical$ geodesics. The boundary at infinity $\HPb$ is given by $\R \cup \{\infty\}$. The orientation-preserving isometry group is given by $\psl$ that acts by Möbius transformations on $\HP$. 

Recall that we denote by $G$ the isomorphic groups $\mathrm{Isom}(\HP), \so,\pgl$ and by $\lalg$ the Lie algebra of $G$.

An \emph{open horoball} $h$ based at $p\in\HPb$ is the projective image of $H (\mathbf v)=\{\mathbf w\in \HP\mid \bil{\mathbf w}{\mathbf{v}}>-1 \}$ where $\mathbf v$ is a future-pointing light-like point in $\pinv p$. 
If $k\geq k'>0$, then $H(k\mathbf v_0)\subset H(k'\mathbf v_0)$.

The boundary of an open horoball $h(p)\subset \HP$ based at $p\in \HPb$ is called a \emph{horocycle}. It is the projective image of the set $$h(\mathbf v):=\{\mathbf w\in \HP \mid \ang{\mathbf w,\mathbf v}=-1 \}.$$
In the projective disk model, it is a Euclidean ellipse inside $\HP$, tangent to $\HPb$ at $p$. In the upper half-plane model, horocycles are either Euclidean circles tangent to a point on the real line or horizontal lines which are horocycles based at $\infty$. In the Poincaré disk model, a horocycle is an Euclidean circle tangent to $\HPb$ at $[p]$. A geodesic, one of whose endpoints is the centre of a horocycle, intersects the horocycles perpendicularly. Note that any horoball is completely determined by a future-pointing light-like vector in $\Min$ and vice-versa. From now onwards, we shall use either of the notations introduced above to denote a horoball. Finally, the set of all horoballs of $\HP$ forms an open cone (the positive light cone). 

Given an ideal point $p\in \HPb$, a \emph{decoration} of $p$ is the specification of an open horoball centred at $p$. A geodesic, whose endpoints are decorated, is called a \emph{horoball connection}.
The following definition is due to Penner \cite{penner}.

\begin{definition}\label{horolength}
	The length of a horoball connection joining two horoballs $\mathbf v_1,\mathbf v_2$ is given by
	\begin{equation*}
		l:= \ln(-\frac{  \ang{\mathbf v_1,\mathbf v_2}}{2}).
	\end{equation*}
\end{definition}

It is the signed length of the geodesic segment intercepted by the corresponding horocycles. In particular, is the horoballs are not disjoint, then the length of the horoball connection is negative. 

\subsection{Killing Vector Fields of $\HP$}
The Minkowski space $\Min$ is isomorphic to $(\lalg, \kappa)$ where $\lalg$ is the Lie algebra of $G=\pgl$ and $\kappa$ is its Killing form, via the following map:
\[\mathbf v=(x,y,z)\mapsto V=\begin{pmatrix}
	y & x+z\\
	x-z & -y
\end{pmatrix} .\]

The Lie algebra $\lalg$ is also isomorphic to the set $\mathscr{X}$ of all Killing vector fields  of $\HP$:
\[V\mapsto \bra{ 
	\begin{array}{ccc}
		X_v:&\HP\longrightarrow &\mathrm T\HP\\
		&\mathbf p\mapsto &\frac{d}{dt} (e^{tV}\cdot \mathbf p)|_{t=0}
\end{array}}\]
Next, one can identify $\Min$ with $\mathscr{X}$ via the map:
\[ \bf v\mapsto \bra{
	\begin{array}{ccc}
		X_v:&\HP\longrightarrow &\mathrm T\HP\\
		&p\mapsto &\bf {v} \mcp \mathbf p
\end{array}}
\] where $\mcp$ is the Minkowski cross product:
\[(x_1,y_1,z_1)\mcp(x_2,y_2,z_2):=(-y_1z_2+z_1y_2,-z_1x_2+x_1z_2, x_1y_2-y_1x_2).\]

\begin{property}
	
	Using these isomorphisms, we have that 
	\begin{itemize}
		\item A spacelike vector $\mathbf v$ corresponds, in $\mathscr{X}$, to an infinitesimal hyperbolic translation whose axis is given by $\mathbf v^{\perp}\cap \HP$. If $\mathbf v^+$ and $\mathbf v^-$ are respectively its attracting and repelling fixed points in $C^+$, then $(\bf v^-, v,v^+)$ are positively oriented in $\Min$.
		\item A lightlike vector $\mathbf v$ corresponds, in $\mathscr{X}$, to an infinitesimal parabolic element that fixes the light-like line $\Span {\mathbf v}$.
		\item A timelike vector $\mathbf v$ corresponds, in $\mathscr{X}$, to an infinitesimal rotation of $\HP$ that fixes the point $\frac{\mathbf v}{\sqrt {-\norm{\mathbf v}}}$ in $\HP$.
	\end{itemize} 
\end{property}

\begin{property}\label{tang}~
	\begin{enumerate}
		\item Given a light-like vector $\mathbf v\in\Min$, the set of all Killing vector fields that fix $\Span {\mathbf v}$ is given by its dual $\mathbf v^{\perp}$. In $\pp$, the set of projectivised Killing vector fields that fix $[\mathbf v] \in \HPb$ is given by the tangent line at $[\mathbf v]$.
		\item The set of all Killing vector fields that fix a given ideal point $p\in\HPb$ and a horocycle in $\HP$ with centre at $p$ is given by $\Span {\mathbf v}$, where $\mathbf v\in \mathbb{P}^{-1}(p)$ in $\Min$.
		\item The set of all Killing vector fields that fix a given hyperbolic geodesic $l$ in $\HP$ is given by $\mathbb{P}^{-1}(l^{\perp})$.
	\end{enumerate}
	
\end{property}
\subsection{Convex cocompact surfaces}\label{ccc}
Any orientable compact surface is of the form $S_{g,n}:=\mathbb{S}^2\#((\mathbb{T}^2)^{\#g})\#((\mathbb{D}^2)^{\# n})$ where
\begin{itemize}
	\item $\s 2$ is a sphere of dimension 2,
	\item $\mathbb{T}^2$ is the topological surface of $\R^2/\Z^2$,
	\item $\mathbb{D}$ is a closed 1-disk,
	\item  the variable $g\in \N$ is called the genus of the surface and is additive under the connected sum, i.e, $S_g \# S_{g'}=S_{g+g'}$. 
	\item the variable $n\in \N$ denotes that number of boundary components.
\end{itemize}
Next, we shall look at some examples and their common names.
\begin{example}
	When $n=0$, the surface is called \emph{closed}.
\end{example}
\begin{example}
	Suppose that $g=0$. 	
	\begin{enumerate}
		\item When $n=1$, we get back the disk $\mathbb D$.  
		\item When $n=2$, we get an \emph{annulus}.
		\item When $n=3$, the surface is called a \emph{pair of pants}.
	\end{enumerate}
\end{example}

\begin{example}
	When $g=1,n=1$, we shall call the surface a \emph{one-holed} torus.
\end{example}

The Euler characteristic of such a surface is given by $\chi(S_{g,n})=2-2g-n$.

\subsection{Non-orientable surfaces}
Any compact non-orientable surface is of the form $T_{h,n}=(\pp)^{\# h}\#((\mathbb{D}^2)^{\# n})$ where 
\begin{itemize}
	\item $\pp$ is the projective plane,
	\item the variable $h\in \N$ here is again additive under the connected sum; the surface corresponding to $h=0$ is the 2-sphere, which is orientable; so when we write $T_{h,n}$, we implicitly assume that $h>0$. Also, we have the equality $T_h\#S_g=T_{h+2g}$, for any $h>0$.  
\end{itemize}
\begin{example}
	When $h=1, n=1$, we get the \emph {Möbius strip}.
\end{example}
\begin{example}
	When $h=2, n=0$, we get the \emph{Klein's bottle}.
\end{example}

We are primarily interested in those compact surfaces $S$ which are hyperbolic and have non-empty boundary. From the Uniformisation Theorem, we know that the Euler characteristic of such a surface, denoted by $\chi(S)$, is negative. The following is the list of all the connected orientable and non-orientable surfaces that aren't hyperbolic, and hence excluded from the discussion:
\[  
\begin{array}{l@{\quad}l}
	S_{0,0}:\text{ a sphere }\s 2, &S_{0,2}: \text{ annulus,}\\
	S_{1,0}:\text{ a torus $\mathbb{T}^2$},&T_{1,1}: \text{ a closed M\"{o}bius Strip}\\
	T_{1,0}: \text{a projective plane $\pp$},&T_{2,0}: \text{Klein's bottle.}
\end{array}
\]

A complete finite-area hyperbolic metric with totally geodesic boundary on a compact hyperbolic surface $S_c:=S_{g,n}$ or $T_{h,n}$ ($n>0$) is given by the following information:
\begin{itemize}
	\item A discrete faithful representation, called a holonomy representation 
	\[ 
	\rho:\fg{S} \longrightarrow \pgl,
	\]
	that maps each boundary component $b_i$ to a hyperbolic element. When $S_c=S_{g,n}$, the image $\rho(\fg {S_c})$ is a Fuchsian subgroup of $\psl$.
	\item A developing map $\dev: \wt{S_c}\longrightarrow \HP$, such that the following diagram commutes: for all $\ga \in \fg{S_c}$
	\[  
	\begin{tikzcd}
		\wt{S_c}\ar[d,"\ga"']\ar[r,"\dev "]&\HP\ar[d,"\rho(\ga)"]\\
		\wt{S_c}\ar[r,"\dev"']&\HP
	\end{tikzcd}
	\] for all $\ga\in\fg{S}$.
	Here, $\wt{S_c}$ is the universal cover of $S_c$, on which an element $\ga\in \fg{S_c}$ acts by deck transformations. 
\end{itemize}

It follows from these conditions that the group $\Gamma:=\rho(\fg {S_c})$ is a discrete finitely generated free group of $\pgl$ containing only hyperbolic elements. The $\dev$-image is a simply-connected region in $\HP$ bounded by infinite geodesics corresponding to the lifts of its boundary components $\partial_i S$, for every  $i=1,\ldots,n$. These geodesics are pairwise disjoint in $\ol{\HP}$.

The \emph{deformation space} $\tei {S_c}$ of the surface is the set of conjugacy classes of all possible holonomy representations. It is a connected component of the set $$\set{[\rho]: \rho \text{ is discrete, faithful}; \forall i, \rho(\partial_i {S_c} ) \text{ is hyperbolic}}\subset\mathrm{Hom}(\fg {S_c}, G)/G,$$ where the action of $G$ is by conjugation.

Let $S_c$ be a compact hyperbolic surface endowed with a metric $m=[\rho]\in \tei {S_c}$. Given an element $[\ga ]\in \fg {S_c}\smallsetminus \{Id\}$, there exists a unique closed $m$-geodesic in this homotopy class, denoted by $\ga_g$.  
\begin{definition}\label{length}
	The \emph{length function} is defined in the following way:
	\[  
	\begin{array}{rrcl}
		l_\ga:&\tei {S_c}&\to& \R_{>0}\\
		&[\rho]&\mapsto & 2\arccosh\pa{\frac{\tr{\rho(\ga_g)}}{2}}.
	\end{array}
	\]
\end{definition}

The following is a well-known result (for e.g. see \cite{flp}) which is usually proved using Fenchel-Nielson coordinates:
\begin{theorem}\label{defc}
	Let ${S_c}$ be a compact hyperbolic surface with geodesic boundary. 
	\begin{enumerate}
		\item If $S_c=S_{g,n}$, then its deformation space $\tei {S_{g,n}}$ is homeomorphic to an open ball of dimension $6g-6+3n$.
		\item If $S_c=T_{h,n}$, then its deformation space $\tei {T_{h,n}}$ is homeomorphic to an open ball of dimension $3h-6+3n$.
	\end{enumerate}
\end{theorem}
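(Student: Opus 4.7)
The plan is to establish the result by constructing explicit Fenchel--Nielsen coordinates on $\tei{S_c}$ and showing these give a homeomorphism with a Euclidean open ball of the claimed dimension. The key building block is the hyperbolic pair of pants: by writing it as the double of a right-angled hyperbolic hexagon, one shows that the moduli space of hyperbolic pairs of pants with totally geodesic boundary is homeomorphic to $\mathbb{R}_{>0}^3$, parametrised by the lengths of the three boundary components, and that there are no nontrivial isometries fixing the boundary pointwise. This ``rigidity'' of pants is the fundamental ingredient that makes the global parametrisation work.

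For the orientable case $S_c = S_{g,n}$, I would fix a pants decomposition: a maximal family $\Gamma = \{\gamma_1,\dots,\gamma_{3g-3+n}\}$ of pairwise disjoint, pairwise non-isotopic, essential simple closed curves cutting $S_{g,n}$ into $2g-2+n$ topological pairs of pants. Given a metric $m \in \tei{S_{g,n}}$, realise each $\gamma_i$ by its unique $m$-geodesic representative; this cuts $(S_c,m)$ into hyperbolic pairs of pants whose boundary lengths are the $\ell_{\gamma_i}(m)$ together with the $n$ boundary lengths $\ell_{b_j}(m)$, giving $3g-3+2n$ length parameters in $\mathbb{R}_{>0}$. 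For each interior curve $\gamma_i$, an additional twist parameter $\tau_i \in \mathbb{R}$ records how the two adjacent pants are glued along $\gamma_i$ (well-defined after fixing a reference marking). This produces a continuous map $\mathrm{FN} : \tei{S_{g,n}} \to \mathbb{R}_{>0}^{3g-3+2n} \times \mathbb{R}^{3g-3+n}$. Injectivity follows from the rigidity of pants; surjectivity is shown by the reverse construction, gluing pants with prescribed boundary lengths and twists; continuity of the inverse comes from the smooth dependence of the gluing data on the representation. The total dimension is $(3g-3+2n)+(3g-3+n) = 6g-6+3n$.

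For the non-orientable case $T_{h,n}$, I would prefer to proceed via the orientation double cover $\pi : \widetilde{T_{h,n}} \to T_{h,n}$, an orientable surface equipped with a fixed-point-free orientation-reversing involution $\iota$. Pulling back metrics identifies $\tei{T_{h,n}}$ with the fixed-point set of the induced action $\iota^{*}$ on $\tei{\widetilde{T_{h,n}}}$. Choosing an $\iota$-invariant pants decomposition, the involution permutes the pants and acts on the Fenchel--Nielsen coordinates by an explicit linear involution on lengths and an affine involution on twists. Computing the fixed set of this involution yields a Euclidean space of the expected dimension; using $\chi(T_{h,n}) = 2-h-n$, one verifies $3h-6+3n$ matches $-3\chi(T_{h,n})$, paralleling the orientable count $-3\chi(S_{g,n}) = 6g-6+3n$. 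Alternatively, one can do a direct pants-and-Möbius-band decomposition, where each hyperbolic Möbius band with geodesic boundary contributes a single length parameter (that of its core geodesic).

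The main obstacle is surjectivity of the Fenchel--Nielsen map, i.e.\ showing that every choice of lengths and twists arises from an honest discrete faithful representation $\rho$ with the correct boundary behaviour. This requires a careful gluing argument: one must check that the pants, glued according to the prescribed combinatorics and twist parameters, yield a complete hyperbolic metric whose holonomy is discrete and maps each boundary curve to a hyperbolic element, and that the resulting conjugacy class lies in the correct connected component of the representation variety. A secondary technical point is the twist parameter's dependence on a reference choice, and in the non-orientable case the verification that an $\iota$-invariant pants decomposition always exists and that the fixed-point computation is correctly dimensioned.
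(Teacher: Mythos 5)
Your proposal is correct and follows exactly the route the paper itself indicates: the paper does not prove Theorem \ref{defc} but cites it as a well-known result (referring to \cite{flp}) ``usually proved using Fenchel--Nielsen coordinates,'' which is precisely your pants-decomposition argument, with the orientation double cover handling the non-orientable case. Your dimension counts $(3g-3+2n)+(3g-3+n)=6g-6+3n$ and $-3\chi(T_{h,n})=3h-6+3n$ are both right, so nothing further is needed.
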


Next we recall infinitesimal deformations and cocycles.
\begin{definition}
	An infinitesimal deformation of a metric $m\in \tei {S_c}$ is a vector of the tangent space $\tang {S_c}$.
\end{definition}

Let $S_c=S_{g,n}$ or $T_{h,n}$ be a compact surface with non-empty boundary equipped with a hyperbolic structure as above. An \emph{infinitesimal deformation} of its holonomy representation $\rho:\fg {S_c}\longrightarrow G$ is a vector of $T_{\rho}\mathrm{Hom}(\fg {S}, G)$. It can be seen as an equivalence class of smooth paths $\{\rho_t \}_{t\in\R}$ with $\rho_0=\rho$. Given such a path, we have that
\begin{align*}
	\ddt{\rho_t}\Bigr\rvert_{t = 0}(\fg{S_c})\subset TG\simeq G\ltimes \lalg,
\end{align*} 
in other words,  for every $\ga\in \fg S_c$, $\ddt{\rho_t}\Bigr\rvert_{t = 0}(\ga)= (\rho(\ga),u(\ga)),$

The map $u:\fg{S_c}\longrightarrow \lalg$ defined above satisfies the \emph{cocycle} condition:
\begin{align}\label{cocycle}
	\text{for every $\ga_1,\ga_2\in \fg{S_c}$, }& u(\ga_1\ga_2)=u(\ga_1)+\mathrm{Ad}(\rho(\ga_1))\cdot u(\ga_2).
\end{align}

A map $u:\fg{S_c}\longrightarrow \soal$, satisfying \eqref{cocycle} is called a $\rho$-cocycle. 

\begin{definition}
	A \emph{$\rho$-coboundary} is a $\rho$-cocycle $u$ such that for some $v_0\in \lalg$
	\begin{align}
		u(\ga)=\mathrm{Ad}(\rho(\ga))v_0-v_0,&\text{ for every $\ga\in \fg{S_c}$}.
	\end{align}  
\end{definition}
Two $\rho$-cocycles are equivalent if they differ by a coboundary. The set of equivalence classes of all $\rho$-cocycles forms the first cohomology group $\mathrm{H}^1_{\rho}(\fg {S_c},\lalg)$. An element $[u]$ of this group is an infinitesimal deformation of the metric $[\rho]$, \ie $[u]\in \tang {S_c}$.

Next, we will define a specific type of infinitesimal deformation of a compact surface, known as an admissible deformation.
\begin{definition}
	Let $S_c$ be a compact (possibly non-orientable) hyperbolic surface with non-empty boundary. Let $m\in \tei {S_c}$ and $v\in \tang {S_c}$. Then $v$ is said to be an \emph{admissible} deformation of $m$ if it satisfies:
	\begin{align}\label{adm}
		\inf_{\ga\in \Gamma\smallsetminus \{Id\}} \frac{\mathrm{d}l_{\ga}(m)(v)}{l_{\ga}(m)} >0,
	\end{align} where $l_{\ga}$ is the length function as introduced in Definition \eqref{length}.
\end{definition}
In other words, an infinitesimal deformation is admissible if and only if the length of every non-trivial closed loop of $S_c$ is uniformly lengthened. The following theorem was proved by Goldman-Labourie-Margulis \cite{glm}:
\begin{theorem}\label{glm}
	The set of all admissible deformations of a compact hyperbolic surface $S_c$ with non-empty totally geodesic boundary forms an open convex cone of $\tang {S_c}$.
\end{theorem}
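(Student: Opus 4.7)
The plan is to split the statement into two parts: first that the admissible set is a convex cone (which follows directly from the definition), then that it is open (which is the main content).

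For the cone and convexity properties, observe that for each closed geodesic $\gamma$ the differential $\mathrm{d}l_\gamma(m):\tang{S_c}\to \mathbb{R}$ is a linear functional, so $v\mapsto \mathrm{d}l_\gamma(m)(v)/l_\gamma(m)$ is linear in $v$. Uniform positivity over all $\gamma$ is therefore preserved under scaling by any $\lambda>0$, and if $v_1,v_2$ are admissible with uniform lower bounds $c_1,c_2>0$, then the convex combination $tv_1+(1-t)v_2$ satisfies the admissibility condition with uniform bound $tc_1+(1-t)c_2>0$ for every $t\in[0,1]$. Hence the admissible set is a convex cone.

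For openness, the plan is to rewrite the admissibility condition using Bonahon's theory of geodesic currents. Let $\mathcal{C}(S_c)$ denote the space of geodesic currents on $(S_c,m)$ with the weak-$*$ topology, in which each closed geodesic embeds canonically. The hyperbolic length extends to a continuous positive linear functional $\ell:\mathcal{C}(S_c)\to\mathbb{R}_{>0}$ with $\ell(\gamma)=l_\gamma(m)$. Interpreting the length derivative through the Margulis invariant of the infinitesimal affine deformation $(\rho,u)$ representing $v$, one then extends $\gamma\mapsto\mathrm{d}l_\gamma(m)(v)$ to a continuous linear functional $\Phi_v:\mathcal{C}(S_c)\to\mathbb{R}$; this is precisely the diffused Margulis invariant alluded to in the introduction.

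Since $\rho$ is convex cocompact, the projectivized space $\mathbb{P}\mathcal{C}(S_c)$ of geodesic currents supported in the convex core is compact, and projective classes of closed geodesics form a dense subset. The continuous ratio $F_v([\mu]):=\Phi_v(\mu)/\ell(\mu)$ is therefore strictly positive on $\mathbb{P}\mathcal{C}(S_c)$ if and only if $v$ is admissible, in which case the infimum in (\ref{adm}) is actually a positive minimum attained on this compactum. The assignment $v\mapsto \min_{[\mu]\in\mathbb{P}\mathcal{C}(S_c)} F_v([\mu])$ is continuous in $v$, being a minimum of a jointly continuous family of linear functionals indexed by a compact space, so its preimage of $(0,\infty)$ is open in $\tang{S_c}$, yielding openness of the admissible cone. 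The main obstacle in this program is the construction of the continuous extension $\Phi_v$: while $\ell$ extends straightforwardly in Bonahon's framework, extending the length derivative requires a careful geometric interpretation via the Margulis invariant of the associated affine action on $\Min$, which is the central technical ingredient of Goldman--Labourie--Margulis.
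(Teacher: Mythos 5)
You should first note that the paper itself offers no proof of this statement: it is quoted as a theorem of Goldman--Labourie--Margulis \cite{glm}, so the only in-paper point of comparison is the proof of the analogous openness statement for decorated surfaces (the unnumbered theorem proved right after Theorem \ref{infdef}). Your first paragraph is correct and complete: since $v\mapsto \mathrm{d}l_{\gamma}(m)(v)$ is linear for each $\gamma$, positivity of the infimum in \eqref{adm} is preserved under positive scaling and under convex combination, and these two closures give a convex cone.

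For openness you reproduce the Goldman--Labourie--Margulis strategy (diffused Margulis invariant on the compact projectivised space of geodesic currents), which is indeed the route the introduction of this paper alludes to, and your compactness/continuity bookkeeping (density of closed geodesics, continuity of the minimum of a family linear in $v$ over a compactum) is sound. But the argument has a genuine gap exactly where you flag it: the existence of a continuous extension $\Phi_v$ of $\gamma\mapsto \mathrm{d}l_{\gamma}(m)(v)$ to geodesic currents is the main analytic content of \cite{glm} --- it requires realising the Margulis invariant as the integral over the unit tangent bundle of the convex core of a suitably constructed continuous function against the current --- and deferring it to the very reference whose theorem you are proving leaves the openness claim unestablished. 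This machinery is avoidable: the paper's own proof of the decorated analogue first establishes a uniform Lipschitz estimate $\left| \mathrm{d}l_{\gamma}(m)(v) \right|\leq K\, l_{\gamma}(m)\,\|v\|$ (Theorem \ref{infdef}, proved by a tile-map count along a fundamental domain), from which openness is immediate: if $\inf_{\gamma} \mathrm{d}l_{\gamma}(m)(v)/l_{\gamma}(m)=\epsilon_0>0$, then every $u$ with $\|u\|<\epsilon_0/K$ gives $\mathrm{d}l_{\gamma}(m)(v+u)/l_{\gamma}(m)\geq \epsilon_0-K\|u\|>0$. The same estimate holds in the undecorated compact case, since $\|u(\gamma)\|$ grows at most linearly in the word length of $\gamma$, which for a convex cocompact holonomy is comparable to $l_{\gamma}(m)$; adopting this elementary route would close your gap without invoking Bonahon's theory at all.
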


\subsection{Margulis spacetimes}\label{mst}
\begin{definition}Let $\rho_0:\Ga \hookrightarrow G\ltimes \lalg$ be the representation of a discrete not virtually solvable group $\Ga$ acting properly discontinuously and freely on $\Min$.
	Then the quotient manifold $M:=\Min/\rho_0(\Ga)$ is called a \emph{Margulis spacetime}. 
\end{definition}

As mentioned in the introduction, Fried and Goldman \cite{fried} proved that by projecting $\Ga$ onto its first coordinate, we get the holonomy representation $\rho: \fg S \rightarrow G$ of a finite-type complete hyperbolic surface $S$. The projection onto the second coordinate $u:\Gamma\rightarrow\lalg$ is a $\rho$-cocycle, which is also an infinitesimal deformation of $\rho$.
The group $\Gamma$ can thus be written as $\Gamma^{(\rho,u)}:=\{(\rho(\ga), u(\ga)) \mid \ga\in \fg S \}$, which gives an \emph{affine deformation} of $\rho$. 
Goldman--Labourie--Margulis proved in \cite{glm} that for $\rho$ convex cocompact, then the group $\Gamma^{(\rho,u)}$ acts properly if the $\rho$-cycle $u$ or $-u$ uniformly lengthens all closed geodesics of the hyperbolic surface, \ie the equivalence class $[u]$ of $u$, modulo coboundaries, lies in the admissible cone $\adm {[\rho]}$ of the hyperbolic surface $\HP/\rho(\fg S)$.

Let $\rho$ be a convex cocompact representation as before and $u$ be a $\rho$-cocycle. Margulis defined an invariant that is used to detect the properness of such a cocycle. For every non-trivial $\ga\in \fg S$, its image $\rho(\ga)$ is a hyperbolic element of $G$ with eigenvalues of the form $\lambda, 1, \lambda^{-1}$. Let $v_1,v_2$ be two future-pointing light-like eigenvectors corresponding to the eigenvalues $\lambda, \lambda^{-1}$, and let $v_0(\ga)$ be the eigenvector of unit norm with eigenvalue 1 such that $(v_1,v_0,v_2)$ is positively oriented. Then the \emph{Margulis invariant} is defined as the map:
\[ 
\begin{array}{rrcl}
	\al_u:&\fg S&\longrightarrow &\R\\
	& \ga &\mapsto &\ang {u(\ga),v_0(\ga)}.
\end{array}
\]
The map $\al_u$ depends only on the cohomology class of $u$. Margulis showed the following lemma about the properness of a cocycle and the sign of the invariant:
\begin{lemma}[Opposite sign lemma, Margulis \cite{margulis}]
	Suppose that $\Gamma^{(\rho,u)}\subset \mathrm{Isom}^+(\Min)$ acts properly on $\Min$. Then either for every $\ga\in \fg S$, $\al_u(\ga)>0$ or for every $\ga\in \fg S$, $\al_u(\ga)<0$.
\end{lemma}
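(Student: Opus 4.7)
The plan is to argue by contradiction: assume there exist $\ga_1,\ga_2\in \fg S$ with $\al_u(\ga_1)>0>\al_u(\ga_2)$, and produce a sequence $\{\eta_n\}\subset \Gamma^{(\rho,u)}$ and a compact set $K\subset \Min$ such that $\eta_n K\cap K\neq \emptyset$ for infinitely many $n$, contradicting proper discontinuity. The geometric substance is that the Margulis invariant is exactly the signed translation length of the unique invariant affine line of a hyperbolic affine isometry, so a sign change produces an element which translates by an arbitrarily small amount.

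First I would make this picture precise. For $g=(\rho(\ga),u(\ga))$ with $\rho(\ga)$ hyperbolic, the decomposition $\Min=\R v_0(\ga)\oplus (\R v_1\oplus \R v_2)$ is $\rho(\ga)$-invariant; projecting the translation part $u(\ga)$ onto $\R v_0(\ga)$ along this splitting shows that $g$ preserves a unique spacelike affine line $\mathcal{A}(\ga)$ parallel to $v_0(\ga)$, and $g$ acts on $\mathcal{A}(\ga)$ by translation of (signed) length $\al_u(\ga)$. Off $\mathcal{A}(\ga)$ the action is a "boost" along $v_1,v_2$ that, crucially, preserves any tubular neighbourhood of $\mathcal{A}(\ga)$ of appropriate shape and moves points \emph{towards} $\mathcal{A}(\ga)$ under suitable iteration. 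This is the standard affine normal form and it is essentially the content of Margulis's construction of $\al_u$.

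Next I would use the dynamics of $\rho(\Gamma)$ on $\HP$ (which is convex cocompact, hence the $\rho(\ga_i)$ generate a Schottky-like system) to build, from $\ga_1$ and $\ga_2$, a family of conjugates and products $\eta_{n}=w_n(\ga_1,\ga_2)$ whose linear axes in $\HP$ accumulate on a common geodesic. Since $v_0(\ga)$, $v_1(\ga)$, $v_2(\ga)$ depend continuously on the pair of attracting/repelling fixed points in $\HPb$, the associated Margulis axes $\mathcal{A}(\eta_n)$ in $\Min$ can be arranged to stay in a bounded tube. Because $\al_u$ restricted to primitive conjugacy classes of such words takes both positive and negative values (it inherits signs from $\ga_1,\ga_2$ via additivity along repeated syllables, and varies continuously over the accumulation), an intermediate value/pigeonhole argument over the compact space of limiting axis configurations yields a subsequence along which $\al_u(\eta_n)\to 0$ while $\mathcal{A}(\eta_n)$ stays in a fixed compact region of $\Min$.

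Finally I would combine these two ingredients to produce the contradiction: take $K$ to be a large compact neighbourhood in $\Min$ of the common limit axis that contains $\mathcal{A}(\eta_n)$ for all large $n$; by the normal form in Step~1, the element $\eta_n$ translates along $\mathcal{A}(\eta_n)$ by $\al_u(\eta_n)\to 0$ and contracts transversally, so $\eta_n$ maps $K$ into itself (or at least intersects $K$) for infinitely many $n$. This violates the assumed properness of $\Gamma^{(\rho,u)}$ on $\Min$, so $\al_u$ cannot change sign. The main obstacle is the middle step: controlling both the geometric position of the Margulis axes and the sign of $\al_u$ simultaneously under word operations, since the Margulis invariant is additive only on powers of a single element and not under general products. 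The cleanest way around this is to exploit density of fixed-point pairs of conjugates of $\ga_1$ and $\ga_2$ on $\HPb\times \HPb$ together with continuity of $(\text{fixed points})\mapsto (\mathcal{A}(\cdot),\al_u(\cdot))$, reducing the problem to a continuity/intermediate-value statement over the limit set.
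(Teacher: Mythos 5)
First, note that the paper does not prove this lemma at all: it is stated as a classical result and attributed directly to Margulis \cite{margulis}, so there is no in-paper argument to compare against. Your sketch does follow the general shape of Margulis's original proof (affine normal form of a hyperbolic affine isometry, production of elements with small invariant, contradiction with properness), and your first and third steps are essentially sound: a hyperbolic element $(\rho(\ga),u(\ga))$ preserves a unique affine line parallel to $v_0(\ga)$ and translates along it by exactly $\al_u(\ga)$, and infinitely many distinct elements whose affine axes meet a fixed compact set and whose invariants stay bounded do violate proper discontinuity. (One small correction there: the transverse dynamics is a boost that expands one lightlike direction and contracts the other, so it does not move points ``towards'' the axis; but you only need points \emph{on} the axis, which are displaced by $|\al_u|$.)

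The genuine gap is exactly the one you flag in the middle step, and the workaround you propose does not close it. An intermediate-value or continuity argument ``over the limit set'' cannot work as stated, because $\al_u$ is a function on the discrete group $\fg S$, not on a connected parameter space: its value depends not only on the pair of fixed points in $\HPb$ (which do vary continuously and densely) but also on the translation length and on the cocycle value $u(\ga)$, neither of which extends continuously over the limit set. What Margulis actually proves, and what your argument needs, is a quantitative \emph{approximate additivity} estimate: for $\ga_1,\ga_2$ in transversal (Schottky) position there is a constant $C$, depending only on the geometry of the configuration, with $|\al_u(\ga_1^n\ga_2^m)-n\,\al_u(\ga_1)-m\,\al_u(\ga_2)|\le C$ for all large $n,m$, together with the fact that the affine axes of $\ga_1^n\ga_2^m$ remain in a fixed compact subset of $\Min$. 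With opposite signs one then chooses $m\approx n\,\al_u(\ga_1)/|\al_u(\ga_2)|$ to keep the invariant bounded while the elements are pairwise distinct, which yields the contradiction. This additivity estimate (or its diffused reformulation via geodesic currents in Goldman--Labourie--Margulis) is the technical heart of the proof and is precisely what your proposal leaves unproved; without it the sign hypothesis on $\ga_1,\ga_2$ is never actually converted into an element with controlled invariant.
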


Next, we  recall \emph{crooked planes}.

Take any space-like vector $\mathbf v\in \Min$. Then the associated Killing field is hyperbolic and has an attracting and a repelling fixed point at $p_+,p_-\in \HPb$, respectively. Their preimages are light-like lines: $\mathbb{P}^{-1} p_+=\R \mathbf{v_+}$, $\mathbb{P}^{-1} p_-=\R \mathbf{v_-}$, where $\mathbf{v_+},\mathbf{v_-}$ are future-pointing light-like vectors. Denote by ${l_{\mathbf {v}}}$, the oriented hyperbolic geodesic from endpoints $p_-$ to $p_+$. It divides $\HP$ into two half-spaces — the one lying to the right of ${l_{\mathbf {v}}}$ is denoted by $H_+(\mathbf v)$, the one to the left is denoted by $H_-(\mathbf v)$. The geodesic ${l_{\mathbf {v}}}$ is transversely oriented in the following way: a directed geodesic $l$ in $\HP$ transverse to ${l_{\mathbf {v}}}$ is said to be pointing in the positive direction if the point $[p_+]\in \HP$ lies to its left. When we refer to the geodesic ${l_{\mathbf {v}}}$ along with its transverse orientation, we shall denote it by $\vec{{l_{\mathbf {v}}}}$.
\begin{definition}
	A \emph{left crooked plane $\mathcal P(\mathbf v)$ centered at 0, directed by a space-like vector $\mathbf v$} is a subset of $\Min$ that is the union of the following sets:
	\begin{itemize}
		\item A \emph{stem}, defined as $\stem:=\{\mathbf w\in \Min\mid \norm {\mathbf w}^2\leq 0\}\cap \du {\mathbf v}$. It meets the light-cone along the two light-like lines $\R \mathbf{v_+}$, $\R \mathbf{v_-}$.
		\item Two \emph{wings}: The connected component of $\du {\mathbf{v_+}}\smallsetminus \R \mathbf{v_+}$ (resp.\ of $\du {\mathbf{v_-}}\backslash \R \mathbf{v_-}$), that contains all the hyperbolic Killing fields whose attracting fixed point is given by $p_+$ (resp.\ $p_-$), is called a \emph{positive wing} (resp.\ a \emph{negative wing}). They are denoted by $\mathcal{W}^+(\mathbf v)$ and $\mathcal{W}^-(\mathbf v)$, respectively.
	\end{itemize}
\end{definition} 
For any vector $\mathbf {v_0}\in \Min$, the subset $\mathcal{P}(\mathbf {v_0},\mathbf v):=\mathbf{v_0}+\mathcal P(\mathbf v)$ is an \emph{affine} left crooked plane \emph{centered at $\mathbf{v_0}$} and directed by a space-like vector $\mathbf v$. Then, $\mathcal{P}(\mathbf 0,\mathbf v)=\mathcal P(\mathbf v)$.

\paragraph{Crooked Halfspaces:} The connected component of $\Min\backslash \mathcal{P}(\mathbf{v_0},\mathbf v)$ containing the Killing fields whose non-repelling fixed points (space-like, time-like, light-like) lie in the half-plane $H_+(\mathbf v)\subset\HP$ (resp.\ $H_-(\mathbf v)$) is called the \emph{positive crooked half-space} (resp.\ \emph{negative crooked half-space)}, denoted by $\mathcal{H}^+(\mathbf v)$ (resp.\ $\mathcal{H}^-(\mathbf v)$).

Next we recall the definition of stem quadrant of a transversely oriented hyperbolic geodesic, as defined in \cite{bcdg}.

Let $\mathbf v\in \Min$ be a space-like vector, $\mathbf{v_+},\mathbf{ v_-}$ be future-pointing light-like vectors in $\du{\mathbf v}$ and $\vect{ {l_{\mathbf {v}}}}$ be the hyperbolic geodesic in $\HP$ with endpoints at $[\mathbf{v_+}], [\mathbf{v_-}]$ and oriented towards $[\mathbf{v_+}]$. 
\begin{definition}
	The set $\sq:=\R_{>0}\mathbf{v_+}-\R_{>0}\mathbf{v_-}$ is called the \emph{stem quadrant} of the transversely oriented geodesic ${l_{\mathbf {v}}}$, associated to the positively oriented triplet $(\mathbf{v_+},\mathbf v, \mathbf{v_-})$.
\end{definition}


\section{Crowned surfaces and their decorations}\label{surfspike}
In this section we introduce crowned surfaces and give a construction of hyperbolic metrics.  For that first we need to define crowned hyperbolic surfaces. We start with the description of the simplest surface of this type and then gradually increase the topological complexity to obtain more generic examples. 

\paragraph{Ideal Polygons.} Let $D^q$ be a closed 2-disk with $q\ (\geq 0)$ points removed from its boundary. When $q \geq3$, we can put a complete finite-area hyperbolic metric on $D^q$ by taking the convex hull in $\HP$ of $q$ distinct points on $\HPb$. These points are called \emph{vertices}, and the \emph{edges} are the infinite geodesics of $\HP$ joining two consecutive vertices. Figure \ref{idealpolygon} shows an ideal pentagon in the projective model of $\HP$.
\begin{figure}[h!]
	\centering
	\frame{	\includegraphics[width=8cm]{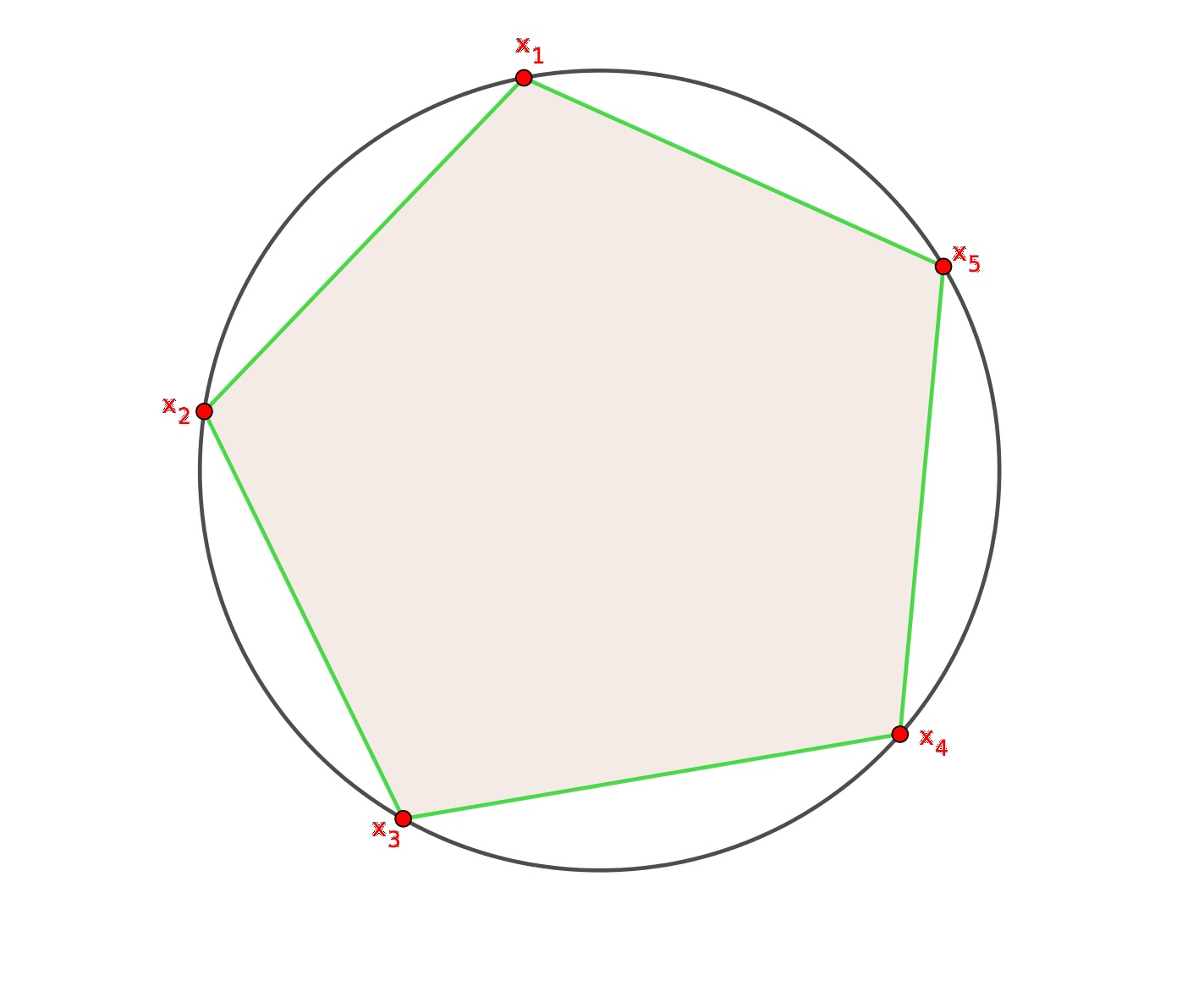}}
	\caption{An ideal pentagon}
	\label{idealpolygon}
\end{figure}

\begin{definition}
	Let $S_c$ be $S_{g,m}$ or $T_{h,m}$, with $m\geq 0$. Consider $k\ (>0)$ disks $D^{q_1},\ldots, D^{q_k}$, with $q_j\geq1$. 
	Then, the surface $\spike$ obtained by taking the connected sum $S_c\#D^{q_1}\#\ldots\#D^{q_k}$ is called a \emph{crowned surface}.
\end{definition}
Let $n:=m+k$ and $Q:=\sum\limits_{i=1}^k q_i$.
Given an orientable (resp.\ non-orientable) crowned surface such that $6g-6+3n+Q>0$ (resp.\ $3h-6+3n+Q>0$), we can put a complete finite-area hyperbolic metric on it. Firstly we give a construction of the hyperbolic metric in the case of "small" hyperbolic surfaces:


\begin{figure}[h!]
	\centering
	\frame{\includegraphics[width=7cm]{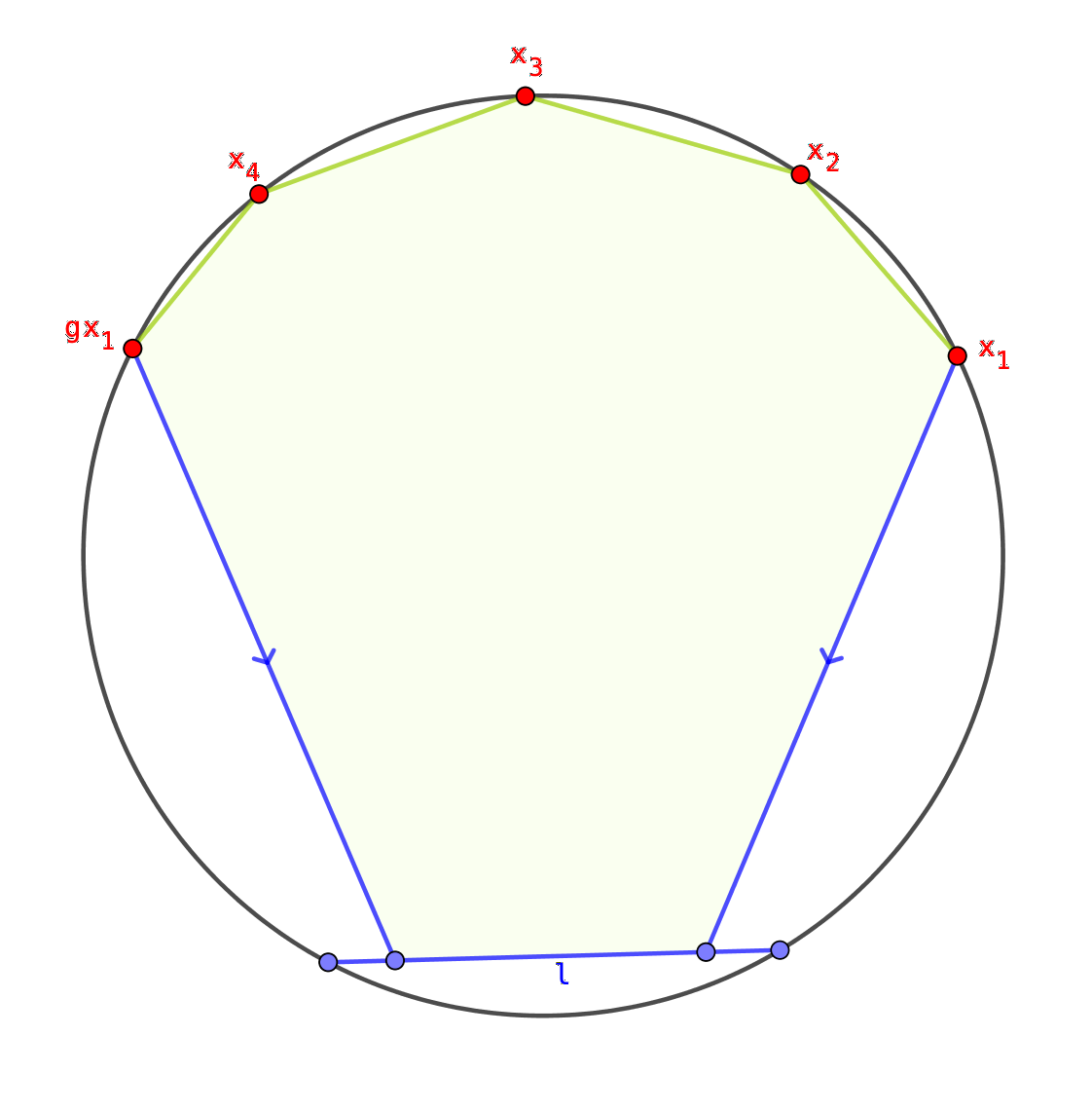}
		\includegraphics[width=8cm]{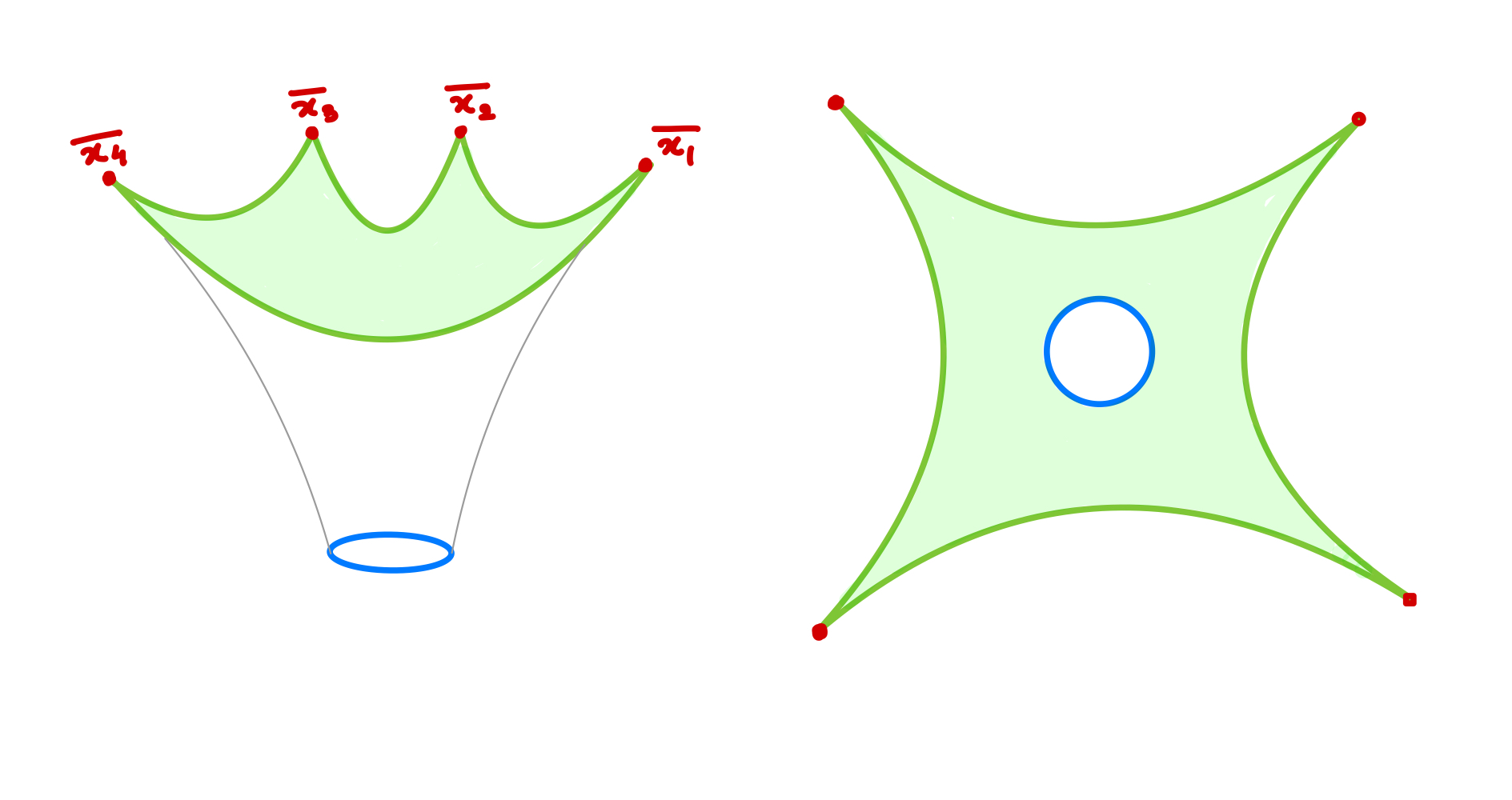}}
	\caption{A fundamental domain for an ideal one-holed square}
	\label{holed}
\end{figure}
\begin{example}
	The orientable surface $\hd^2 \# D^{q}$, for $q>0$, is called a \emph{crown} and is denoted by $\holed q$. Its boundary consists of one simple closed curve, denoted by $\ga$, and $q$ open intervals. Its fundamental group $\fg{\holed q}$ is generated by a loop $b$ in the free  homotopy class of $\ga$ and is isomorphic to $\Z$. Next, we put a hyperbolic structure on $\holed q$ in the following way:
	
	Let $\ga\in \psl$ be a hyperbolic element whose axis is a bi-infinite geodesic, denoted by $l$. See Fig.\ \ref{holed}. It divides the boundary circle $\HPb$ into two open intervals. Choose a point $x_1$ in any one of them and take $(q-1)$ distinct points $x_2,\ldots,x_q$ on the same interval between $x_1$ and its image $\ga\cdot x_1$. Mark all the points of their $\ang{\ga}$-orbit. All of them lie on the same side of $l$ as the initial points. Join consecutive pairs using infinite geodesics. Drop two perpendiculars from $x_1$ and $\ga\cdot x_1$ to $l$ and identify them using $\ga$. The quotient surface is a complete finite-area hyperbolic surface with geodesic boundary. If $\rho:\fg {\holed n}\longrightarrow \psl$ is the holonomy representation, then $\rho([b])=g$. The images of the ideal points $x_1,\ldots,x_q$ in the quotient are called \emph{spikes}.
\end{example}

\begin{example}
	The orientable surface $S:=\s 2\# D^{q_1}\# D^{q_2}$ for $q_1,q_2>0$ is called a ($q_1,q_2$)-\emph{spiked annulus.} Any connected component of its boundary is homeomorphic to an open interval. It contains exactly one isotopy class $[\ga]$ of (non-trivial) simple curves.
	Its fundamental group is again isomorphic to $\Z$. 
	\begin{figure}[h!]
		\centering
		\frame{	\includegraphics[width=10cm]{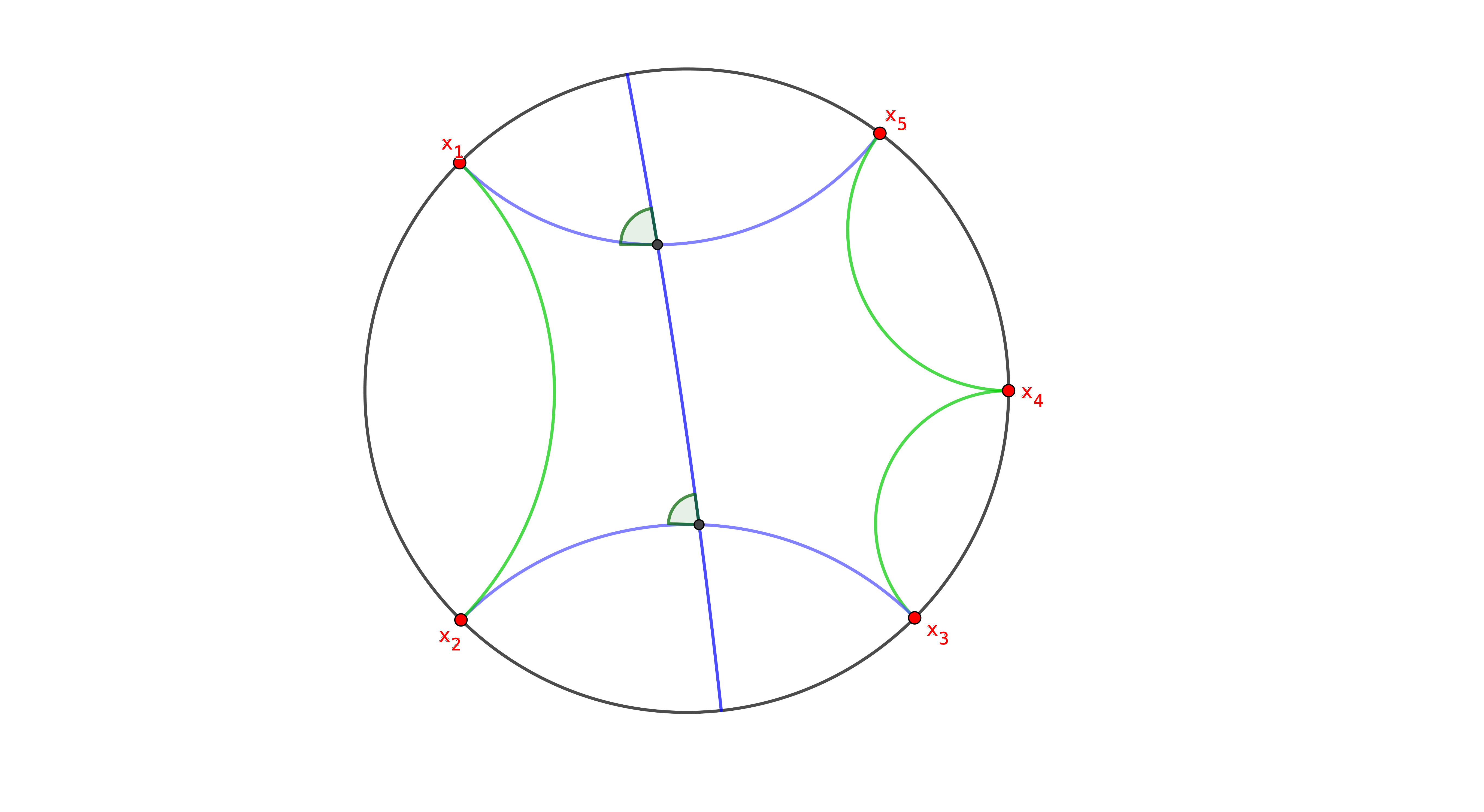}}
		\caption{A fundamental domain for a (1,2)-spiked annulus}
		\label{spann}
	\end{figure}
	Take an ideal $q$-gon in $\HP$, where $q=q_1+q_2+2.$ Its vertices are denoted by $x_1,\ldots,x_{q}$ in the anti-clockwise direction. Let $l_1,l_2$ be the edges joining the pairs of vertices ($x_{q_1+1},x_{q_1+2}$) and $(x_{q},x_1)$, respectively. Let $g\in\psl$ be a hyperbolic isometry whose axis intersects both $l_1$ and $l_2$ at the same angle, and  whose translation length is given by the distance between the two points of intersection. Then the quotient surface $S=\ip q /\sim$, where for every $z\in l_1$, $z\sim g\cdot z$, is a complete finite-area hyperbolic surface with geodesic boundary, homeomorphic to a ($q_1,q_2$)-spiked annulus. Its holonomy representation $\rho:\fg {S}\longrightarrow \psl$ maps the generator $[\ga]$ to $g$.
\end{example}
\begin{figure}[h!]
	\centering
	\frame{\includegraphics[width=10cm]{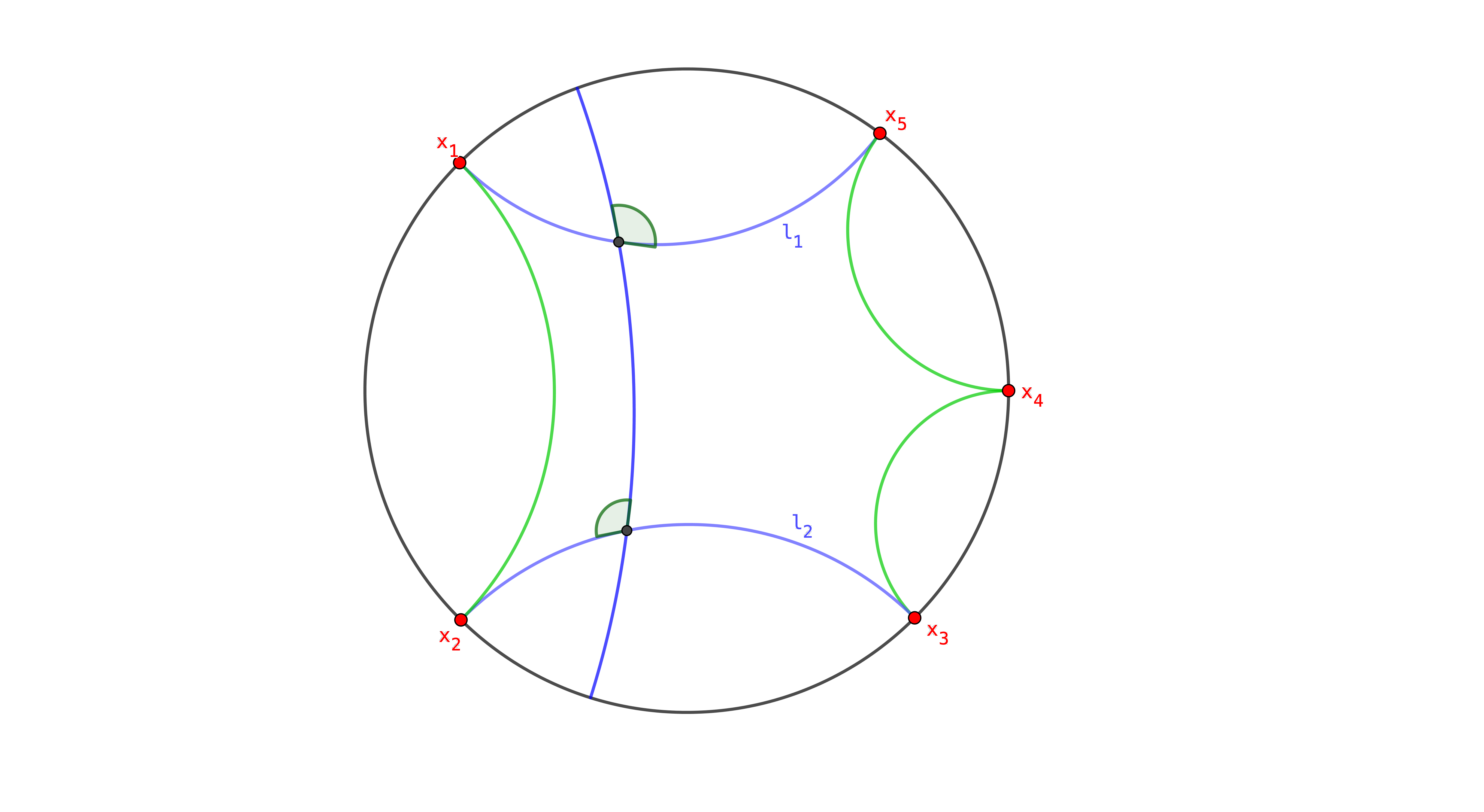}}
	\caption{A fundamental domain for a 3-spiked Möbius strip}
	\label{spmob}
\end{figure}
\begin{example}
	The non-orientable surface $\pp\# D^{q}$, for $q>0$, is called a {spiked M\"obius strip}. Its orientation double cover is a ($q,q$)-spiked annulus. Similar to the previous example, we consider an ideal $q+2$-gon with marked vertices $x_1,\ldots,x_{q+2}$. See Fig.\ \ref{spmob}. Take any two distinct edges $l_1,l_2$ of the polygon. 
	Let $l$ be a geodesic that intersects $l_1$ and $l_2$ such that the angles of intersections are complementary, and let $d$ be the distance between 
	$l\cap l_1$ and $l\cap l_2$. Let $h\in \pgl$ be the gliding reflection along $l$ with translation length $d$. Then the quotient $\ip n/\sim $, where for every $z\in l_1$, $z \sim h\cdot z$, is a complete finite-area crowned hyperbolic surface.
\end{example}

 A simple closed loop on $\spike=S_c\#D^{q_1}\#\ldots\#D^{q_k}$ is called a \emph{peripheral loop} if it is either freely homotopic to a boundary curve of the compact surface $S_c$ or separates a crown from the rest of the surface $\spike$. 

\begin{nota}
	For every $i=1,\ldots,n:=k+m$,
	let $q_i \geq 1$ be the number of spikes of the $i$-th crown if $1\leq i \leq k$, and $q_i=0$ if $k<i\leq m$ (i.e.\ if the $i$-th peripheral loop is isotopic to a boundary component of $S_c$). 
	 Define the spike vector $\vec{q}:=(q_1,\ldots,q_n)$ and let $Q$ be the total numberof spikes. Finally, an orientable (resp.\ non-orientable) surface with genus $g$ (resp.\ $h$), $n$ peripheral loops and spike vector $\vec{q}$ is denoted by $\gensurf$ (resp.\ $\gensurfn$).
\end{nota}
\begin{nota}
	Now we extend the above notation for the exceptional cases that do not have hyperbolic closures. For ideal $q$-gons, ($q\geq 3$), we shall use the notation $S_{0,1}^{(q)}$. For a  ($q_1,q_2$)-spiked annulus, we shall use $S_{0,2}^{(q_1,q_2)}$. Finally, for a M\"obius strip with $q\,  (\geq 1)$ spikes, we shall use $T_{1,1}^{(q)}$.
\end{nota}


Next we 
parametrize all marked, complete, finite-area hyperbolic metrics
on a generic crowned surface $\spike$. Let $\partial_1,\ldots,\partial_n$ be the peripheral loops of this surface, where $n=m+k$. We shall assume that the closure $S_c$ is hyperbolic since we have already treated the cases where it is not hyperbolic. Choose a base point $p\in S_c$. For $i=1,\ldots, n$, let $b_i$ be a loop in $S_c$ based at $p$ that is freely homotopic to $\partial_i$. Start with a hyperbolic metric on the surface $S_c$ with totally geodesic boundary. Let $\rho:\fg{S_c,\ p}\rightarrow \pgl$ be a holonomy representation. Let $\ga_i:=\rho([b_i]) \in \pgl $.
\begin{figure}[!ht]
	\centering
	\frame{\includegraphics[width=12cm]{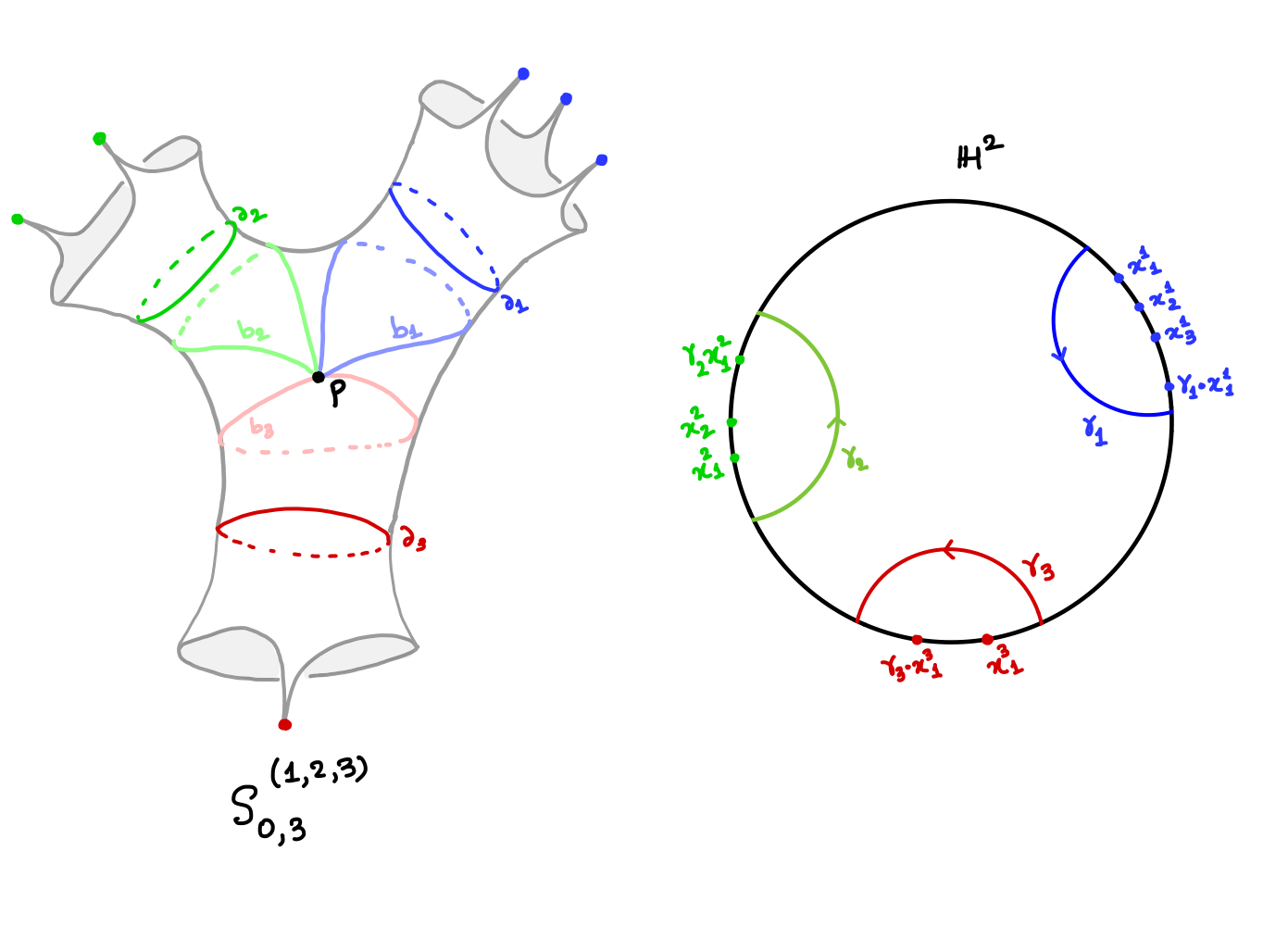}}
	\caption{A metric on $S^{1,2,3}_{0,3}$}
	\label{ucspikepants}
\end{figure}

Now we will choose the ideal points representing the spikes on $\spike$. See Fig.\ref{ucspikepants}. In the universal cover, we look at the oriented axes of $\ga_i$. They separate the lift of $S_c$ from a halfplane. We choose $Q$ distinct points on $\HPb$ in the following way— whenever $q_i>0$, take $q_i$ ordered ideal points $\ul{x^i}=(x^i_1,\ldots,x^i_{q_i})$ on the same side of the axis of $\ga_i$ as the halfplane, such that $x^i_{q_i}$ lies between $x^i_{q_{i}-1}$ and $\ga_i\cdot x^i_{1}$. Denote by $\mathbf x$ the tuples of ideal points $\ul{x^1},\ldots,\ul{x^k}$.

A 
(marked)
metric on a surface with spikes $\spike$ can be seen as an ordered pair $(\rho,\mathbf{x})$.  Two pairs $(\rho,\mathbf{x}), \, (\rho',\mathbf{x'})$ are said to be equivalent if there exists an element $g\in \pgl$ such that for all $\ga\in \fg \spike$, $\rho'(\ga)=g\rho(\ga)g^{-1}$ and $\mathbf{x'}=g\cdot \mathbf{x}$. The set of all the equivalence classes forms the deformation space of a crowned surface $\spike$, denoted by $\tei\spike.$

The following theorem about the dimension of the deformation space of a surface with non-decorated spikes is analogous to Theorem \ref{defc}.
\begin{theorem}\label{defsp}
	Let $\spike$ be a crowned hyperbolic surface with $Q$ spikes. 
	\begin{enumerate}
		\item If $\spike=\gensurf$, then its deformation space $\tei \spike$ is homeomorphic to an open ball of dimension $6g-6+3n+Q$.
		\item If $\spike=\gensurfn$, then its deformation space $\tei \spike$ is homeomorphic to an open ball of dimension $3h-6+3n+Q$.
	\end{enumerate}
\end{theorem}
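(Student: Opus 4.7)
The plan is to reduce the theorem to the compact case of Theorem~\ref{defc} by exhibiting $\tei\spike$ as a fiber bundle over the Teichm\"uller space of an auxiliary compact hyperbolic surface with $n$ geodesic boundary components, whose fibers are open $Q$-balls parameterising spike positions. Let $S^*$ denote the compact surface obtained by cutting $\spike$ along its $k$ type-(b) peripheral loops (the simple closed geodesics separating each crown from the rest of $\spike$) and discarding the crown pieces; this is a compact surface homeomorphic to $S_{g,n}$ (resp.\ $T_{h,n}$) with $n=m+k$ geodesic boundary circles, and it inherits a hyperbolic structure from that of $\spike$. By Theorem~\ref{defc}, $\tei{S^*}$ is an open ball of dimension $6g-6+3n$ (resp.\ $3h-6+3n$).

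Next I would define a forgetful map $F\colon \tei\spike \to \tei{S^*}$ sending a marked metric $(\rho,\mathbf{x})$ to the underlying hyperbolic structure on $S^*$, obtained by restricting $\rho$ to $\fg{S^*}\hookrightarrow \fg\spike$ and discarding the spike data. Over a fixed $[\rho]\in \tei{S^*}$, the fiber $F^{-1}([\rho])$ is the space of admissible spike tuples: for each crown $i$, an ordered $q_i$-tuple $(x^i_1,\ldots,x^i_{q_i})$ of distinct ideal points on the appropriate open interval of $\HPb$ cut out by the axis of $\rho(\partial_i)$, subject to the cyclic-ordering constraint $x^i_1<x^i_2<\cdots<x^i_{q_i}<\rho(\partial_i)\cdot x^i_1$. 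Each single-crown configuration space is a convex open subset of $\R^{q_i}$, so the full fiber is homeomorphic to an open $Q$-ball. A continuous choice of reference spike positions, depending smoothly on the attracting/repelling fixed points of $\rho(\partial_i)\in \pgl$, trivialises $F$ locally, and since both base and fiber are open balls the total space $\tei\spike$ is homeomorphic to an open ball of the claimed dimension $6g-6+3n+Q$ (resp.\ $3h-6+3n+Q$).

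The main obstacle is twofold. First, one must verify that the diagonal $\pgl$-action on pairs $(\rho,\mathbf{x})$ is free and proper on the relevant open subset of $\mathrm{Hom}(\fg\spike,\pgl)\times (\HPb)^Q$, so that the quotient is a genuine manifold rather than an orbifold; this holds whenever $\fg{S^*}$ is non-abelian, equivalently whenever $S^*$ is itself hyperbolic. Second, the exceptional cases $S_{0,1}^{(q)}$, $S_{0,2}^{(q_1,q_2)}$ and $T_{1,1}^{(q)}$, in which $S^*$ is not hyperbolic and Theorem~\ref{defc} does not apply, must be handled directly by parameter counting modulo the $\pgl$-action. For an ideal polygon $S_{0,1}^{(q)}$ the parameter space is the configuration space of $q$ cyclically ordered ideal points on $\HPb$ quotiented by the three-dimensional $\pgl$-action, which is an open ball of dimension $q-3$. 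For the spiked annulus and spiked M\"obius strip the peripheral holonomy is a single hyperbolic element or gliding reflection whose one-dimensional centraliser in $\pgl$ absorbs one spike parameter after fixing the translation length; the resulting parameter spaces are convex open subsets of Euclidean space of dimensions $q_1+q_2$ and $q$ respectively, matching the formula.
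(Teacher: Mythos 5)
Your proposal is correct and follows essentially the same route the paper takes: the paper defines a marked metric on $\spike$ precisely as a pair $(\rho,\mathbf{x})$ consisting of a holonomy of the compact core $S_c$ together with the ordered tuples of spike positions on $\HPb$ modulo conjugation, so $\tei\spike$ fibres over $\tei{S_c}$ (an open ball of dimension $6g-6+3n$, resp.\ $3h-6+3n$, by Theorem~\ref{defc}) with fibre the open $Q$-ball of admissible spike configurations, and the exceptional cases $S_{0,1}^{(q)}$, $S_{0,2}^{(q_1,q_2)}$, $T_{1,1}^{(q)}$ are treated separately via the explicit constructions. Your dimension counts, including the absorption of one parameter by the one-dimensional centraliser in the annulus and M\"obius cases, agree with the paper's.
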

\begin{definition}
	The smallest closed geodesically convex subset of a crowned hyperbolic surface $\spike$ which contains all of its closed geodesics is called the \emph{convex core} of $\spike$.
\end{definition} 	
Given a crowned hyperbolic surface $\spike=\gensurf$ or $\gensurfn$, its convex core is a compact surface homeomorphic to $S_{g,n}$ or $T_{h,n}$. 
The following is the list of all crowned surfaces whose convex cores are not hyperbolic:
\begin{itemize}
\item Ideal polygons and ideal punctured polygons have empty convex cores.
\item The convex cores of a spiked M\"obius strip, of a crown and of a spiked annulus are all homeomorphic to a circle.
\end{itemize}

Now we define a decorated hyperbolic surface.

\begin{definition}
	A hyperbolic surface with decorated spikes is obtained from a crowned hyperbolic surface of type $\gensurf$ or $\gensurfn$ by decorating each spike with a horoball. Such a surface is denoted by $\decogs$ (when orientable) and $\decogsn$ (when non-orientable).
\end{definition}

We shall use the symbol $\sh$ for referring to both cases at once.

The deformation space $\tei {\sh}$ of such a surface is the trivial $\R_{>0}^{Q}$ bundle over $\tei{\spike}$ where the fibre over a point in the base space is given by the Busemann functions of the horoballs based at the $Q$ spikes of the surface $\spike$. A point in $\tei\sh$ is called a \emph{decorated metric} and is given by the equivalence class $[\rho,\mathbf{x}, \mathbf{h}]$, where the vector $\mathbf{h}=(h_1,\ldots,h_Q)$ records the horoball at every spike.

A \emph{horoball connection} on a surface $\sh$ is a geodesic path joining two not necessarily distinct decorated spikes.
It is the image in the quotient of a horoball connection (see Definition \ref{horolength}) in $\HP$ joining a pair of decorated ideal vertices in $\HPb$. 

Recall that a decorated ideal vertex corresponds to a unique light-like vector. 
Let $(p_1,h_1)$ and $(p_2,h_2)$ be lifts of the two decorated vertices that are joined by the horoball connection $\beta$. Let $v_1,v_2$ be the corresponding light-like vectors. Then we have the following length function for horoball connections:
\[ 
\begin{array}{rrcl}
	l_{\be}: &\tei {\sh}&\longrightarrow &\R\\
	& m &\mapsto&\ln(-\frac{  \ang{\mathbf v_1,\mathbf v_2}}{2}).
\end{array}
\]
The set of all horoball connections is denoted by $\mathcal{H}$.
Using Theorem \ref{defsp}, we have that 
\begin{theorem}\label{defh}
	Let $\sh$ be a hyperbolic surface with $Q$ decorated spikes. 
	\begin{enumerate}
		\item If $\sh=\decogs$, then its deformation space $\tei \decogs$ is homeomorphic to an open ball of dimension $6g-6+3n+2Q$.
		\item If $\sh=\decogsn$, then its deformation space $\tei \decogsn$ is homeomorphic to an open ball of dimension $3h-6+3n+2Q$.
	\end{enumerate}
\end{theorem}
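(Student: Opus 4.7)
The proof plan is essentially to combine the undecorated dimension count of Theorem \ref{defsp} with the extra parameters introduced by the horoballs, exploiting the trivial bundle structure on $\tei{\sh}$ that was observed in the paragraph immediately preceding the theorem.

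First I would recall that, by construction, a point of $\tei{\sh}$ is an equivalence class $[\rho,\mathbf{x},\mathbf{h}]$ where $[\rho,\mathbf{x}]\in\tei{\spike}$ and $\mathbf{h}=(h_1,\dots,h_Q)\in\R_{>0}^Q$ is the vector of Busemann-function parameters of the horoballs decorating the $Q$ spikes. Forgetting $\mathbf{h}$ gives the forgetful projection
\[
\pi:\tei{\sh}\longrightarrow\tei{\spike},
\]
and the remark preceding the theorem states that this is a \emph{trivial} $\R_{>0}^Q$-bundle. Concretely, once the spike $x_i$ is fixed in $\HP$, the set of horoballs based at $x_i$ is canonically identified with $\R_{>0}$ (via the positive scaling parameter of the corresponding future-pointing light-like vector in $\mathbb{P}^{-1}(x_i)$), and because $\pgl$ acts by isometries that map horoballs to horoballs compatibly with this parameter, these coordinates descend to give a global trivialization $\tei{\sh}\cong\tei{\spike}\times\R_{>0}^Q$.

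From there the conclusion is immediate. By Theorem \ref{defsp}, $\tei{\spike}$ is homeomorphic to an open ball of dimension $6g-6+3n+Q$ in the orientable case and $3h-6+3n+Q$ in the non-orientable case. Since $\R_{>0}^Q$ is homeomorphic to $\R^Q$ via the componentwise logarithm, and the product of two open balls of dimensions $d_1,d_2$ is homeomorphic to $\R^{d_1}\times\R^{d_2}\cong\R^{d_1+d_2}$, hence to an open ball of dimension $d_1+d_2$, the total space $\tei{\sh}$ is an open ball of dimension
\[
(6g-6+3n+Q)+Q \;=\; 6g-6+3n+2Q
\]
in the orientable case and $3h-6+3n+2Q$ in the non-orientable case.

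The only point that requires any care is the triviality of the bundle $\pi$, which is really the only thing separating the statement from a one-line corollary of Theorem \ref{defsp}. I would address it by noting that the choice of horoball at each spike $x_i$ is determined by a single positive real (the Busemann parameter, or equivalently the norm of the associated light-like vector once a reference scaling is chosen), and that these parameters are invariant under the diagonal $\pgl$-action used in the equivalence relation defining $\tei{\sh}$; hence a global section of $\pi$ exists and the product decomposition is canonical. No part of the argument should encounter a genuine obstacle beyond this bookkeeping, since the horoball decoration adds independent positive real parameters that do not interact with the hyperbolic structure parameters counted in Theorem \ref{defsp}.
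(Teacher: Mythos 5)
Your proposal is correct and follows essentially the same route as the paper, which derives the statement directly from Theorem \ref{defsp} together with the observation (stated just before the theorem) that $\tei{\sh}$ is a trivial $\R_{>0}^{Q}$-bundle over $\tei{\spike}$ with fibres given by the Busemann parameters of the horoballs. Your additional care about why the bundle is trivial is a reasonable elaboration of a point the paper takes for granted.
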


Next we define admissible deformations for hyperbolic surfaces with decorated spikes.

\begin{definition}
	Let $\sh$ be a hyperbolic surface with decorated spikes. The \emph{admissible cone} for a given metric $m\in \tei {\sh}$, denoted by $\adm m$, is the set of all infinitesimal deformations of $m$ that uniformly lengthen every horoball connection and every closed curve. 
\end{definition}
\begin{remark}
	Lengthening every simple  horoball connection lengthens every two-sided curve in the interior of the surface, and every peripheral loop bounding a crown. But one-sided curves and spike-less boundary components are not lengthened. 
\end{remark}
If the decoration on $\sh$ is such that the closures of the decorating horoballs are all pairwise disjoint, then an element $v\in \tang {\sh}$ is admissible if and only if it satisfies the following condition: 
\begin{align}\label{hcon}
	\inf_{\be\in \mathcal{H}}\frac{\mathrm{d}l_{\be}(m)(v)}{l_{\be}(m)} >0,
\end{align}
where $l_{\be}$ is the length function associated to horoball connection $\beta$ (Definition \eqref{horolength}) and $\mathcal{H}$ is the set of all horoball connections.
If some of the decorating horoballs of the spikes of $\sh$ overlap then an admissible $v$ satisfies
\[ 
\inf_{\be\in \mathcal{H}^-}\mathrm{d}l_{\be}(m)(v)>0,
\] where $\mathcal H^-$ is the set of horoball connections with non-positive length, and \eqref{hcon} for horoball connections with positive length.
\begin{remark}
An admissible deformation of $(\sh,[\rho,\mathbf{x},\mathbf{h}])$ determines uniquely an admissible deformation of $(\sh,[\rho,\mathbf{x},\lambda\mathbf{h}])$, for any $\lambda\in \R^*$. 
\end{remark}
In Section \ref{sd}, we prove that the admissibe cone is an open convex cone.

\section{Arcs and the arc complex}\label{arc}
\subsection{Definitions}
In this section we recall the arcs and the arc complexes of crowned surfaces as well as surfaces with decorated spikes. These have been discussed in detail in \cite{ppstrip} (Section 3). We restate the definitions here for the sake of completude.

An \emph{arc} on a hyperbolic surface $S$ with non-empty boundary, possibly with spikes, is an embedding $\al$ of a closed interval $I\subset \R$ into $S$ such that exactly one of the following holds:
\begin{enumerate}
	\item If $I=[a,b]$, then the endpoints  verify $\al(a),\al(b) \in \partial S$, and $\al(I)\cap S=\set{\al(a),\al(b)}$. 
	\item If $I=[a,\infty)$ then the finite endpoint satisfies $\al(a)\in \partial S$ and the infinite end converges to a spike, \ie $\al(t)\overset{t\to \infty}{\longrightarrow} x$, where $x$ is a spike.
\end{enumerate}  

An arc $\al$ of a hyperbolic surface $S$ with non-empty boundary is called \emph{non-trivial} if each connected component of $S\smallsetminus \{\al \}$ has at least one spike or generalised vertex. 

Let $\mathscr A$ be the set of all non-trivial arcs of the two above types.
\begin{definition}\label{ac}
	The \emph{arc complex} of a surface $S$, generated by a subset $\mathcal{K}\subset \mathscr A$, is a simplicial complex $\ac S$ whose base set $\ac{S}^{(0)}$ consists of the isotopy classes of arcs in $\mathcal K$, and there is an $k$-simplex for every $(k+1)$-tuple of pairwise disjoint and distinct isotopy classes. 
\end{definition}

The elements of $\mathcal{K}$ are called \emph{permitted} arcs and the elements of $\mathscr A\smallsetminus\mathcal{K} $ are called \emph{rejected} arcs. 
Next we specify the elements of $\mathcal{K}$ for the different types of surfaces:
\begin{itemize}
	\item In the case of a hyperbolic surface possibly with undecorated spikes, the set $\mathcal K$ of permitted arcs comprises of non-trivial finite arcs that separate at least two spikes from the surface.
	\item In the case of a hyperbolic surface with decorated spikes, the set $\mathcal K$ of permitted arcs comprises of non-trivial finite arcs that separate at least one spike, and infinite arcs of type 2 whose infinite ends converge to spikes and whose finite ends lie on the boundary of the surface. 
\end{itemize}

\begin{remark}
	\begin{enumerate}
		\item Two isotopy classes of arcs of $S$ are said to be disjoint if it is possible to find a representative arc from each of the classes such that they are disjoint in $S$. Such a configuration can be realised by geodesic segments in the context of hyperbolic surfaces. 
		In our discussion, we shall always choose such arcs as representatives of the isotopy classes. 
	\end{enumerate}
\end{remark}

The 0-skeleton $\sigma^{(0)}$ of a top-dimensional simplex $\sigma$ of the arc complex $\ac S$ is called a \emph{triangulation} of the surface $S$.

\begin{definition}\label{filling} We define a \emph{filling} simplex of the arc complex of the different types of surfaces:
	\begin{itemize}
		\item For a hyperbolic surface possibly with non-decorated spikes, a simplex $\sigma$ is said to be filling if the arcs corresponding to $\sigma^{(0)}$  decompose the surface into topological disks with at most two spikes. 
		\item For a surface with decorated spikes, a simplex $\sigma$ is said to be filling if the arcs corresponding to $\sigma^{(0)}$  decompose the surface into topological disks with at most one spike.
	\end{itemize}
\end{definition}
From the definition it follows that any simplex containing a filling simplex is also filling. 
\begin{definition}\label{pac}
	The \emph{pruned arc complex} of a surface $S$, denoted by $\sac S$ is the union of the interiors of the filling simplices of the arc complex $\ac S$.
\end{definition}
Every point $x\in \sac {S}$ is contained in the interior of a unique simplex, denoted by $\sigma_x$, \ie there is a unique family of arcs $\{\al_1,\ldots,\al_p\}$, namely the 0-skeleton of $\sigma_x$, such that \[ x=\sum_{i=1}^p t_{i} \al_i, \, \sum_{i=1}^p t_i =1,\,\text{ and } \forall i, \, t_i>0 .\]
Define the \emph{support} of a point $x\in \sac S$ as $\supp x:= \sigma_x^{(0)}$.

In this section we shall be studying the topology of the pruned arc complexes of hyperbolic surfaces with undecorated spikes, followed by hyperbolic surfaces with decorated spikes. 

\subsection{The arc complex of a crowned or compact surface}
Firstly, we shall discuss a theorem by Harer which proves that the pruned arc complex of an orientable surface is an open ball. In his paper, the terminology used is different from what we have seen up until now so we shall give a quick introduction to the objects involved in his result. Then by interpreting his result in the appropriate manner, we shall prove that the pruned arc complex in the case of orientable surfaces with spikes is an open ball. Finally, we shall derive the same result for non-orientable surfaces.
\paragraph{Harer's Terminology:}Let $S_{g,r,s}$ be an orientable surface of genus $g$ with $r$ boundary components and $s$ punctures:
\[  S_{g,r,s}:= S_{g,r}\smallsetminus \{y_1,\ldots,y_s\},\] where $y_1,\ldots,y_s$ are points in the interior of $S_{g,r}$, that play the role of spikeless boundary components in our case. Mark $Q$ distinct points $x_1,\ldots,x_Q$ on the boundary $\partial S_{g,r,s}$ such that each boundary component, denoted by $\partial_i S_{g,r,s}$ for $i=1,\ldots,r$, contains at least one such point. These points shall play the role of spikes. Let $\Omega=\{x_1,\ldots, x_Q,y_1,\ldots y_s\}$.

The deformation space $\tei {S_{g,r,s}}$ is an open ball of dimension $N_0:=6g-6+3r+2s$.  Define $\mathcal{T}(\Omega):=\{(m,\lambda)\}$, where $m\in \tei {S_{g,r,s}}$ and $\lambda$ is a positive projective weight on the points of $\Omega$. Then, $$\mathcal{T}(\Omega)= {\tei {S_{g,r,s}}} \times \ball{s+Q-1}\simeq \ball {6g-7+3r+3s+Q}.$$ 

Consider $\mathcal{K}$ to be the set of embedded arcs in $S_{g,r}$ whose endpoints belong to $\Omega$. The arc complex spanned by the arcs in $\mathcal{K}$ is denoted by $\ac {S_{g,r,s}}$. A simplex $\sigma$ of the arc complex is said to be "big" if the arcs corresponding to its 0-skeleton divide the surface $S_{g,r,s}$ into topological disks. The pruned arc complex $\sac {S_{g,r,s}} $ is defined to be the union of the interior of the big simplices. 

Finally, let $\mcg {S_{g,r,s}}$ be the mapping class group of the surface whose elements fix the points in $\Omega$. Then, Harer \cite{harer} proves the following theorem:
\begin{theorem}\label{harer}
	There is a natural homeomorphism $\Phi:\mathcal{T}(\Omega)\longrightarrow \sac {S_{g,r,s}}$ that commutes with the action of the mapping class group $\mcg {S_{g,r,s}}$.
\end{theorem}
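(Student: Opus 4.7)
The plan is to construct $\Phi$ via the Epstein--Penner convex hull construction from decorated Teichm\"uller theory, and then to verify it is a homeomorphism using Penner's lambda length coordinates. The key point is that a decorated hyperbolic metric on $S_{g,r,s}$ canonically produces a cell decomposition of the surface into ideal polygons, whose $1$-skeleton gives a big simplex of the arc complex, and whose dual geometry provides a natural system of positive weights on its arcs.

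Concretely, given $(m,\lambda)\in \mathcal{T}(\Omega)$, I would first realise $\lambda$ as a choice of horoball at each point of $\Omega$ (marked boundary points and punctures being treated uniformly as decorated ideal points, after suitable doubling along spike-free boundary components if needed). Lift the decorated metric to the universal cover in $\HP$ and represent each lifted horoball by its future-pointing light-like vector in $\Min$. Take the $\rho(\fg{S_{g,r,s}})$-invariant closed convex hull of this orbit of light-like vectors; its boundary is a locally finite, equivariant, piecewise-flat, time-like hypersurface, and its codimension-one faces project to a polygonal decomposition of $S_{g,r,s}$ into topological disks. The edges of that decomposition give pairwise disjoint non-trivial arcs $\alpha_1,\ldots,\alpha_p$ spanning a big simplex $\sigma\in\ac{S_{g,r,s}}$, and the weights $t_i$ are defined geometrically from the exterior tilts of the convex hull at each $\alpha_i$ (equivalently, from Penner's simplicial coboundary of the lambda lengths) and normalised to a barycentric point of $\sigma$.

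To construct the inverse, I would invoke Penner's theorem: for every ideal triangulation $\tau$ of $S_{g,r,s}$, the lambda lengths assemble into a global real-analytic coordinate chart $\lambda_\tau:\mathcal{T}(\Omega)\longrightarrow\R_{>0}^{d(\tau)}$, where $d(\tau)$ is the cardinality of $\tau^{(0)}$. The Epstein--Penner construction partitions $\mathcal{T}(\Omega)$ into open cells indexed by big simplices, and the cell over a fixed $\sigma$ is parametrised exactly by the open simplex of positive barycentric weights on $\sigma^{(0)}$; inverting $\Phi$ on each cell and gluing along the cell boundaries gives the required continuous inverse. Continuity of $\Phi$ follows from continuous dependence of convex hulls on their defining light-like vectors, and $\mcg{S_{g,r,s}}$-equivariance is automatic since the construction is canonical in the marked topological surface and uses no auxiliary choices.

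The main obstacle is verifying that the Epstein--Penner cell decomposition of $\mathcal{T}(\Omega)$ matches the simplicial face structure of $\sac{S_{g,r,s}}$: when one weight $t_i$ tends to $0$, the corresponding edge of the convex hull must collapse, triggering a Delaunay-type flip so that the combinatorial type of the decomposition changes consistently with passage to a codimension-one face of $\sigma$, and the transition across adjacent cells is continuous. Establishing this cell-adjacency compatibility is precisely where the strict convexity of the Epstein--Penner hypersurface enters, together with the fact that the flip graph of ideal triangulations matches the codimension-one face relations of the pruned arc complex; this is the technical heart of the argument and the step most likely to require the longest verification.
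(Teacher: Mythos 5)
The paper does not prove Theorem \ref{harer} at all: it is quoted verbatim from Harer and used as a black box (the subsequent corollaries are obtained by reinterpreting or symmetrising his statement, not by reproving it). So there is no ``paper proof'' to match; what you have written is a genuinely different route from the one the paper relies on. Harer's original argument is essentially the Strebel--Jenkins/Mumford--Thurston one: given $(m,\lambda)\in\mathcal{T}(\Omega)$ one produces a unique quadratic differential with closed trajectories and prescribed ring-domain moduli, whose critical graph is a spine dual to a system of weighted arcs, and the widths of the annular pieces give the barycentric coordinates in $\sac{S_{g,r,s}}$. Your proposal instead follows Penner (and Epstein--Penner, Bowditch--Epstein): decorate the ideal points with horoballs, take the convex hull of the corresponding light-like orbit in $\Min$, and read off the big simplex from the Delaunay-type face decomposition, with weights given by the simplicial coordinates. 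Both are legitimate; Harer's route stays within conformal geometry and gives mapping-class-group equivariance for free from uniqueness of the Strebel differential, while yours stays within the hyperbolic/Minkowski framework of the present paper (lambda lengths, decorations, light-like vectors) and would mesh more naturally with the rest of the article. Your identification of the cell-adjacency/flip compatibility as the technical heart is accurate; that is indeed where Penner's proof does its work.

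One concrete gap you should address before this counts as a proof: Harer's $\mathcal{T}(\Omega)$ consists of hyperbolic metrics with \emph{totally geodesic boundary} together with marked points \emph{on} that boundary and an abstract projective weight $\lambda$; your construction needs decorated \emph{ideal} points (cusps or spikes) with horoballs. The passage from one to the other — converting each marked boundary point into a decorated spike, interpreting $\lambda$ as horoball sizes, and checking that this conversion is itself a homeomorphism compatible with $\mcg{S_{g,r,s}}$ — is not automatic and is not supplied by the parenthetical about doubling (note that in Harer's setup every genuine boundary component carries at least one marked point, so there are no spike-free boundary components to double along; it is the punctures $y_j$ that are interior). Penner's bordered decorated Teichmüller theory provides exactly this dictionary, but as written your argument assumes it rather than proves it.
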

\paragraph{Interpretation:} Let $\gensurf$ be an orientable hyperbolic surface with undecorated spikes with $k$ boundary components homeomorphic to a circle. Recall that in the case of hyperbolic surfaces with undecorated spikes, the permitted arcs are finite and have both their endpoints on two (not necessarily distinct) connected components of the boundary of the surface. By comparing these arcs with the permitted arcs used by Harer, we get that punctures and spikeless boundaries play the same role in the two surfaces; similarly, the points $y_1,\ldots,y_Q$ can be interpreted as spikes in our case. Then, $k=s$ and $n=r+s$. So, the arc complex $\ac \gensurf$ is isomorphic to the arc complex $\ac {S_{g,r,s}}$. Also, the definition of a big simplex is the same in the two approaches. Hence, from Theorem \eqref{harer} we have that

\begin{corollary}\label{pacundeco}
	The pruned arc complex $\sac\gensurf$ of a connected orientable surface $\gensurf$ with non-decorated spikes is homeomorphic to an open ball of dimension $6g-7+3n+Q$.
\end{corollary}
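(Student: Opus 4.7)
The strategy is to deduce the corollary directly from Harer's Theorem \ref{harer} by matching our surface $\gensurf$ with one of Harer's surfaces $S_{g,r,s}$, and then to tally dimensions. The first step is to set up the dictionary: I would take $r=k$ (one Harer-boundary component per crown of $\gensurf$), $s=m=n-k$ (each spikeless peripheral loop of $\gensurf$ plays the role of a Harer-puncture, since the crowned-boundary structure distinguishes the two kinds of peripheral loops combinatorially in the same way that marked boundary vs.\ puncture does for Harer), and place the $Q$ marked points $x_1,\dots,x_Q\in\partial S_{g,r,s}$ in bijection with the $Q$ spikes, arranged cyclically around each boundary component according to the spike vector $\vec q$.

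Next I would verify that this identification induces a simplicial isomorphism $\ac{\gensurf}\simeq\ac{S_{g,r,s}}$, and hence $\sac{\gensurf}\simeq\sac{S_{g,r,s}}$. To each permitted finite arc of $\gensurf$ one associates Harer's arc obtained by isotoping each endpoint to the nearest spike along its crown edge (or to the corresponding puncture, if the endpoint lies on a spikeless peripheral loop), and the inverse construction pushes Harer's endpoints off the marked points onto the boundary; this preserves isotopy classes and disjointness. The non-triviality condition that a permitted arc separates at least two spikes translates into the condition that Harer's arc is essential (not isotopic into a neighbourhood of $\Omega$), and a simplex is filling in our sense (complementary regions are topological disks containing at most two spikes) precisely when it is ``big'' in Harer's sense (complementary regions are topological disks), after collapsing each boundary edge to its associated marked point.

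Finally I would carry out the dimension count. By Harer's theorem, $\sac{S_{g,r,s}}$ is homeomorphic to the open ball
\[
\mathcal{T}(\Omega)=\tei{S_{g,r,s}}\times\ball{s+Q-1},
\]
of dimension $(6g-6+3r+2s)+(s+Q-1)=6g-7+3r+3s+Q$. Substituting $r=k$ and $s=n-k$ collapses this to $6g-7+3n+Q$, which is exactly the claimed dimension, and is also one less than $\dim\tei\gensurf$ from Theorem \ref{defsp}, as expected for a projectivised parameter space.

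The main obstacle is the precise justification of the arc-complex isomorphism, because our arcs end on boundary edges while Harer's end at marked points, and our spikeless peripheral loops are genuine geodesic boundaries rather than cusps. I expect this to be handled by a routine but slightly delicate isotopy/collapsing argument at the combinatorial level, and the topology of the pruned complex is invariant under these moves since neither the hyperbolic nor the conformal structure enters into the definitions of $\ac{\gensurf}$ or $\sac{\gensurf}$.
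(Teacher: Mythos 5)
Your proposal follows essentially the same route as the paper: the paper's proof consists precisely of the ``Interpretation'' paragraph preceding the corollary, which sets up the same dictionary (crowns of $\gensurf$ $\leftrightarrow$ boundary components of $S_{g,r,s}$ carrying marked points, spikeless peripheral loops $\leftrightarrow$ punctures, spikes $\leftrightarrow$ the marked points of $\Omega$), asserts the induced identification of the (pruned) arc complexes, and performs the identical dimension count $(6g-6+3r+2s)+(s+Q-1)=6g-7+3r+3s+Q=6g-7+3n+Q$. If anything, you give slightly more detail than the paper on the two points you flag as delicate --- the endpoint-pushing isomorphism between the edge-to-edge and marked-point arc complexes, and the matching of ``filling'' with ``big'' --- both of which the paper simply asserts without argument.
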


Next, we prove a similar result for non-orientable surfaces possibly with spikes using Harer's theorem. 

The \emph{orientation covering} of a surface $S$ is defined as the pair ($\ol S, \pi$), where,
\[ \ol S:=\{(p, o(p) \mid p\in S,  o (p) \text{ is an orientation of } T_pS) \} \] and 
\[\begin{array}{cccc}
	\pi:& \ol\gensurfn&\longrightarrow &\gensurfn\\
	&(p,o(p))&\mapsto&p
\end{array} .
\] Since the tangent space $T_pS$ has exactly two orientations, $\pi$ is a two-sheeted covering map. Since $\gensurfn$ is non-orientable, one has that $\ol \gensurfn$ is a orientable surface with $$\chi(\ol \gensurfn)=2\chi(\gensurfn)<0.$$ So we get that $\overline{\gensurfn}$ is hyperbolic and that it is of the form $S_{h-1,2n}^{\vec{q}\sqcup\vec{q}}$, where $\vec{q}\sqcup\vec{q}:=(q_1,\ldots,q_n,q_1,\ldots,q_n)$.
Let  $\Upsilon:S_{h-1,2n}^{\vec{q}\sqcup\vec{q}}\longrightarrow S_{h-1,2n}^{\vec{q}\sqcup\vec{q}}$ be the covering automorphism that exchanges the two points in every fibre of $\pi$.  

We shall revert back to Harer's notation — suppose that $\gensurfn$ has $k$ spikeless boundary components. Then, $S_{h-1,2n}^{2\vec{q}}$ has $s:=2k$ spikeless boundary components. Let $r:=2(n-k)$. We shall be working with $S_{h-1,r,s}$ which is an orientable surface of genus $h-1$ with $r$ boundary components and $s$ punctures. From Theorem \eqref{harer}, we know that $\sac{S_{h-1,r,s}}$ is an open ball of dimension $${6(h-1)-7+3(r+s)+2Q={6h-13+6n+2Q}}.$$
Let $\mathcal{I}(\ac{S_{h-1,r,s}})$ be the subset of $\ac{S_{h-1,r,s}}$ that is invariant under the action of $\Upsilon$. Since the homeomorphism $\Phi:\mathcal{T}(\Omega)\longrightarrow \sac{S_{h-1,r,s}} $ commutes with the action of $\Upsilon$, we get that $\Phi(\mathcal{I}(\ac{S_{h-1,r,s}}))$ is the set of points $(m,\lambda)\in \mathcal{T}(\Omega)$ such that $m$ and $\lambda$ are invariant under $\Upsilon$. Every permitted arc $\al$ of $\gensurfn$ lifts to two disjoint arcs $\al_1,\al_2$ in $S_{h-1,r,s}$ because $\Upsilon$ is a double cover and an arc is simply-connected. These two arcs are interchanged by the action of $\Upsilon$. So, the isotopy classes $[\al^1],[\al^2]$ as well as the 1-simplex generated by them belong to $\mathcal{I}(\ac{S_{h-1,r,s}})$. Consequently, we get the following map between the arc complexes:
\[ \begin{array}{rccl}
	h:&\ac \gensurfn&\longrightarrow&\mathcal{I}(\ac{S_{h-1,r,s}})\\
	(0-\text{skeleton})&[\al]&\mapsto& \frac{[\al^1]+[\al^2]}{2},\\
	(k-\text{skeleton})&\sum_{i=1}^{k+1}t_i[\al_i]&\mapsto&\sum_{i=1}^{k+1}t_i\frac{[\al_i^1]+[\al_i^2]}{2},
\end{array} \]
where $k\leq N_0$, $t_i\geq0$ for $i=1,\ldots,k+1$ ,with $\sum_{i=1}^{k+1}t_i=1$.

\begin{lemma}
	The map $h:\sac \gensurfn\longrightarrow\mathcal{I}(\sac{S_{h-1,r,s}})$ is an isomorphism.
\end{lemma}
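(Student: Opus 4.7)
My plan is to verify three properties of $h$ in sequence: (a) it is well-defined as a simplicial map from $\sac{\gensurfn}$ into the $\Upsilon$-invariant part of $\sac{S_{h-1,r,s}}$; (b) it is injective; (c) every $\Upsilon$-invariant point in the target is hit. Since $h$ is defined to be affine on barycentric coordinates, a simplicial bijection between these finite-dimensional simplicial complexes automatically gives the desired homeomorphism. The unifying ingredient is the standard dictionary between permitted arcs on the base and $\Upsilon$-orbits of permitted arcs on the cover.

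For (a), each permitted arc $\al$ on $\gensurfn$ is simply connected, so $\pi^{-1}(\al)$ is a trivial double cover and splits as two disjoint embedded arcs $\al^1, \al^2=\Upsilon(\al^1)$ whose endpoints lie in $\Omega$. These lifts determine distinct isotopy classes, because an isotopy between $\al^1$ and $\Upsilon(\al^1)$ would in particular imply the existence of a $\Upsilon$-invariant arc in that isotopy class, forcing a fixed point of the free involution $\Upsilon$. Pairwise disjointness of a family $\{\al_i\}$ on $\gensurfn$ immediately lifts to pairwise disjointness of $\{\al_i^1,\al_i^2\}$ on $S_{h-1,r,s}$, so simplices go to simplices. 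For the filling/pruning condition, if $\{\al_1,\ldots,\al_p\}$ decomposes $\gensurfn$ into topological disks, each such disk $D$ is orientable and simply connected, hence $\pi^{-1}(D)$ is two disjoint disks in the cover, each with the same spike content as $D$. Consequently the lifted family decomposes $S_{h-1,r,s}$ into topological disks, so the image is indeed in Harer's pruned arc complex, and invariance under $\Upsilon$ is clear from the symmetrised formula.

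For (b) and (c), the key fact is that $\Upsilon$ acts on the set of isotopy classes of permitted arcs on the cover without fixed points (same argument as above). This yields a bijection between isotopy classes on $\gensurfn$ and $\Upsilon$-orbits of size two on $S_{h-1,r,s}$. Injectivity of $h$ is then immediate at the level of $0$-skeleta and extends to simplices because distinct simplices on the base correspond to distinct pairs of $\Upsilon$-orbits on the cover. For surjectivity onto $\mathcal{I}(\sac{S_{h-1,r,s}})$, take an invariant point $p$ and let $\sigma$ be the unique simplex whose interior contains $p$. The involution $\Upsilon$ must permute $\sigma^{(0)}$ and preserve the barycentric coordinates of $p$; since it has no fixed vertex, the vertices of $\sigma$ pair up into orbits $\{\ga_\ell^1,\ga_\ell^2\}$ with equal weights. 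Projecting each orbit by $\pi$ gives a family $\{\al_\ell\}$ of pairwise disjoint permitted arcs on $\gensurfn$, and the argument of (a) run in reverse shows this family is filling. Then $h$ sends the corresponding barycentric combination to $p$.

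The step that requires the most care is showing that the filling conditions match up through $\pi$, in particular the converse direction: if the lifted family decomposes $S_{h-1,r,s}$ into disks, one must show the original family decomposes $\gensurfn$ into disks. Here the point is that any complementary region $D \subset \gensurfn\smallsetminus\bigcup\al_\ell$ is either orientable (whence $\pi^{-1}(D)$ is two copies of $D$, forcing $D$ to be a disk) or non-orientable (whence $\pi^{-1}(D)$ is its connected orientation cover, which cannot be a disk since disks are orientable). Combined with the spike-counting check (lifts of a disk with $k$ spikes are two disks with $k$ spikes each), this matches the paper's pruning condition on the base with Harer's on the cover, and concludes the proof.
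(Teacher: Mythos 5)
Your proof is correct and follows essentially the same route as the paper's: well-definedness by lifting arcs and complementary disks to the orientation double cover, and the inverse by pairing up the $\Upsilon$-orbits of vertices of the unique simplex containing an invariant point and projecting. Your treatment of the converse filling condition (a complementary region downstairs must be orientable, since a disk cannot be a connected orientation double cover, hence lifts to two homeomorphic copies and is itself a disk) is in fact spelled out more carefully than in the paper, which only records that the complementary disks ``project to equal or disjoint regions.''
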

\begin{proof}
	Firstly, we show that this map is well-defined. A point $x\in \sac {S}$ belongs to the interior of a unique big simplex $\sigma_x$ of $\ac S$: 
	\[  x=\sum_{i=1}^{N_0} t_i\, [\al_i]\text{, with }t_i\in (0,1), [\al_i]\in \sigma_x^{(0)}, \text{ for every }i=1,\ldots,N_0\text{ and }\sum_i t_i=1.\]
	The union $\bigcup\limits_i e_i$ of arcs decomposes the surface $S$ into topological disks with at most two spikes. Being simply connected, they lift to twice as many disks partitioning the double cover. So the simplex formed by $\{[\al_i^1],[\al_i^2]\}_i$ is big. Hence we get that $h(x)\in \sac{S_{h-1,r,s}}$. Since $\Upsilon$ exchanges the two arcs $[\al_i^1],[\al_i^2]$ for every $i=1,\ldots,N_0$, we get that $h(x)\in \mathcal{I}(\sac{S_{h-1,r,s}})$.  
	
	Now we construct the inverse of $h$. Start with $y\in \mathcal{I}(\sac{S_{h-1,r,s}})$. Since, $y\in \sac{S_{h-1,r,s}}$, there exists a unique big simplex $\sigma_y$ such that $y\in \inte{\sigma_y}$, \ie \[  y=\sum_{j=1}^{q} s_j\, \al_j\text{, with }s_j\in (0,1), \al_j\in \sigma_y^{(0)}, \text{ for every } j=1,\ldots,q\text{ and }\sum_j s_j=1.\] Since $y\in \mathcal{I}$, it is invariant under the action of $\Upsilon$. The family of arcs in $\sigma_y^{(0)}$ project to equal or disjoint arcs in the quotient surface. Similarly, since $\sigma_y$ is big the connected components of the complement of this family of arcs are disks and they project to equal or disjoint regions. If $\al,\al'$ are two arcs in $\sigma_y^{(0)}$ that have equal weight $t$, then they project to the same arc $\be$; so $h^{-1}(t([\al]+[\al']):=t\be$.
	This concludes the proof of the lemma.
\end{proof}

\begin{corollary}\label{pacundecon}
	The pruned arc complex $\sac\gensurfn$ of a non-orientable surface $\gensurfn$ with non-decorated spikes is homeomorphic to an open ball of dimension $3h-7+3n+Q$.
\end{corollary}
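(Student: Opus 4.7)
The plan is to combine the isomorphism $h : \sac{\gensurfn} \to \mathcal{I}(\sac{S_{h-1,r,s}})$ from the preceding lemma with Harer's Theorem~\ref{harer}, and then to compute the $\Upsilon$-invariant subspace of $\mathcal{T}(\Omega)$ via its product structure.

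First, since Harer's homeomorphism $\Phi : \mathcal{T}(\Omega) \to \sac{S_{h-1,r,s}}$ is $\Upsilon$-equivariant (as already invoked in the proof of the preceding lemma), it restricts to a homeomorphism between the fixed subsets $\mathcal{T}(\Omega)^{\Upsilon}$ and $\mathcal{I}(\sac{S_{h-1,r,s}})$. Chaining with $h$, the corollary reduces to showing that $\mathcal{T}(\Omega)^{\Upsilon}$ is an open ball of dimension $3h-7+3n+Q$.

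Next, I would exploit the product decomposition $\mathcal{T}(\Omega) = \tei{S_{h-1,r,s}} \times \ball{s + 2Q - 1}$ (noting that the double cover carries $2Q$ boundary marked points), on which $\Upsilon$ acts factorwise, so the fixed set is the product of the fixed sets of each factor. The invariant part of the first factor consists of hyperbolic metrics descending to the orbit quotient, which is the non-orientable surface with $h$ crosscaps, $n-k$ boundary circles and $k$ cusps; its Teichm\"uller space is an open ball of dimension $3h-6+3(n-k)+2k = 3h-6+3n-k$. For the second factor, $\Upsilon$ acts freely on $\Omega$ (since $\pi$ has no ramification points) and pairs its $2k+2Q$ points, so an invariant positive projective weight is determined by $k+Q$ free values summing projectively to one, giving an open simplex of dimension $k+Q-1$.

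Summing these, the invariant subspace has dimension $(3h-6+3n-k)+(k+Q-1) = 3h-7+3n+Q$, as required. Each factor's invariant subset is an open ball (the first by the standard theory of Teichm\"uller spaces of non-orientable surfaces, the second as an open sub-simplex), so their product is too. The main point to verify carefully is that $\Upsilon$ truly acts freely on $\Omega$ so that all marked points pair up (with no $\Upsilon$-fixed point contributing a different contribution to the weight count); this is immediate from the fact that $\pi$ is an unramified double cover of the non-orientable surface $\gensurfn$, whose nontrivial deck transformation $\Upsilon$ has no fixed points.
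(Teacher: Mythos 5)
Your argument is correct and follows essentially the same route as the paper: both reduce, via the isomorphism $h$ and the $\Upsilon$-equivariance of Harer's homeomorphism $\Phi$, to identifying the $\Upsilon$-fixed locus of $\mathcal{T}(\Omega)$ and showing it is an open ball of dimension $3h-7+3n+Q$. The only (minor) difference is how that fixed locus is exhibited as a ball: the paper parametrises it by lengths of arcs of an $\Upsilon$-invariant triangulation together with invariant projective weights, whereas you split it factorwise as the Teichm\"uller space of the quotient non-orientable surface times the invariant weight simplex; your dimension count $(3h-6+3n-k)+(k+Q-1)=3h-7+3n+Q$ and your observation that $\Upsilon$ acts freely on $\Omega$ are both correct.
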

\begin{proof}
	The subset $\Phi(\mathcal{I}(\ac{S_{h-1,r,s}}))$ is an open ball of dimension $3h-7+3n+Q$ — it can be parametrised by the lengths of geodesic arcs of an $\Upsilon$-invariant triangulation of $S_{h-1,r,s}$ and $\Upsilon$-invariant projective weights on the set $\Omega$. Using the isomorphism $h$, we get that \[ \sac\gensurfn=h^{-1}(\mathcal{I}(\ac{S_{h-1,r,s}}) )=h^{-1}\Phi^{-1}(\ball{3h-7+3n+Q}).\]
\end{proof}

Next, we shall prove that the pruned arc complex of a surface with decorated spikes is an open ball.

As mentioned previously, the arcs that are considered for spanning the arc complex for such a surface are either finite, separating at least one spike, or infinite with one endpoint exiting the surface through a spike. The former is referred to as an \emph{edge-to-edge} arc, while the latter is called a \emph{spike-to-edge} arc.

Firstly, we will prove the following theorem for orientable surfaces $\decogs$:
\begin{theorem}\label{pacdeco}
	The pruned arc complex $\sac \decogs$ of an orientable surface $\decogs$ with decorated spikes is an open ball of dimension $6g-7+3n+2Q$, where $Q$ is the total number of spikes.
\end{theorem}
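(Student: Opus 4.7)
The plan is to follow the strategy used in Corollary \ref{pacundeco}, extending Harer's Theorem \ref{harer} to account for the additional spike-to-edge arcs and the stricter ``at most one spike per disk'' filling condition. The idea is to realize $\sac \decogs$ as the pruned arc complex of an auxiliary orientable surface in Harer's sense, and then read off both the open-ball structure and the dimension $6g-7+3n+2Q$.

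Concretely, I would first convert $\decogs$ into a Harer-type surface $S_{g,r,s}$ with a distinguished set $\Omega$ of boundary marks and punctures. Each decorated spike of $\decogs$ should contribute simultaneously a boundary mark (serving as an endpoint of edge-to-edge arcs) and an interior puncture (encoding the decoration and acting as the endpoint of spike-to-edge arcs). The crowned peripheral loops of $\decogs$ become Harer boundary components (the $r$ boundaries), and each spikeless peripheral loop of $\decogs$ is converted into a Harer puncture (contributing to $s$). The goal is to establish a bijective correspondence between isotopy classes of permitted arcs on $\decogs$ (finite and spike-to-edge alike) and Harer arcs on $S_{g,r,s}$ between points of $\Omega$, and to check that this correspondence extends to simplices in a way that matches the two notions of ``filling'': decomposition into disks with at most one spike on $\decogs$, and decomposition into topological disks on $S_{g,r,s}$.

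Once that dictionary is set up, Harer's Theorem \ref{harer} yields a homeomorphism between the pruned arc complex on the Harer side and $\mathcal T(\Omega)$. Transferring through the bijection of simplices gives a homeomorphism $\sac \decogs \simeq \mathcal T(\Omega)$, which is an open ball. A direct dimension count using $\dim \mathcal T(\Omega) = \dim \tei{S_{g,r,s}} + (s+Q-1)$, together with the bookkeeping of spikes / marks / punctures coming from the chosen identification, should produce the asserted dimension $6g-7+3n+2Q$.

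The step I expect to be the main obstacle is the precise formulation of the auxiliary surface so that the dimensions and the filling conditions line up exactly. A naive identification that treats each decorated spike simultaneously as a boundary mark and as a full Harer puncture overcounts by $2Q$, since both the puncture location and its projective weight become free parameters. A cleaner alternative, and the fallback I would switch to, is induction on $Q$: the base case $Q=0$ is Corollary \ref{pacundeco}, and at each inductive step one isolates a single decorated spike, expressing $\sac \decogs$ as an open two-ball fibration (combinatorial type of a chosen spike-to-edge arc together with its weight and the horoball scale) over the pruned arc complex of a surface with $Q-1$ decorated spikes. Verifying that this fibration structure is globally trivial, and that the local gluings across changes of combinatorial type extend continuously, is the technical heart that needs attention.
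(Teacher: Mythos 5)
Your proposal contains a genuine gap. The first route you describe (each decorated spike becoming simultaneously a boundary mark and a Harer puncture) you yourself flag as overcounting the dimension, so it cannot be the proof as stated. The fallback, induction on $Q$ via a fibration whose fibre records ``the combinatorial type of a chosen spike-to-edge arc together with its weight and the horoball scale,'' leaves unproven exactly the hard part. The set of isotopy classes of spike-to-edge arcs ending at a fixed spike is an infinite discrete set, so the alleged fibre is an infinite union of cells indexed by combinatorial types; showing that this union assembles into an open $2$-ball, that the resulting fibration is globally trivial, and that the gluings are continuous across changes of type is a Harer-type contractibility statement in its own right, not a verification. Moreover, the intermediate objects of the induction --- surfaces with $Q-1$ decorated spikes and one undecorated spike --- are not among the surfaces for which permitted arcs and the filling condition are defined (the decorated condition is ``at most one spike per disk,'' the undecorated one is ``at most two''), so the induction is not even well-posed without first building that mixed theory.

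For comparison, the paper avoids both difficulties by a doubling construction: truncate a neighbourhood of each spike along an embedded arc ($V$-edge), glue two copies of the truncated surface along these $V$-edges to obtain a spikeless hyperbolic surface $\Sigma$ of genus $2g$ with $2n+Q$ boundary components, carrying an involution $\iota$ exchanging the two copies. Spike-to-edge arcs of $\decogs$ become single $\iota$-invariant arcs of $\Sigma$ crossing a $V$-edge, edge-to-edge arcs become $\iota$-swapped twin pairs, and one checks that this induces a simplicial isomorphism $\sac\decogs\cong\mathrm{Fix}_\iota(\sac\Sigma)$ compatible with the filling conditions. Corollary \ref{pacundeco} (i.e.\ Harer's theorem applied equivariantly to $\Sigma$) then identifies $\mathrm{Fix}_\iota(\sac\Sigma)$ with the $\iota$-fixed locus of $\mathcal T(\Omega)$, an open ball parametrised by an $\iota$-invariant triangulation, and the dimension $6g-7+3n+2Q$ falls out without any overcounting. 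If you want to salvage your approach, the doubling trick is the mechanism that replaces both your direct dictionary and your inductive fibration.
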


We shall denote $S_0$ by the topological surface with genus $g$, $n$ boundary components and $q_i\geq0$ marked points on $\partial_i S_0$, $i=1,\ldots,n$ such that $\chi(S_0)<0$.
Let $\vec{\xi}=(\xi_1,\ldots, \xi_Q)$ be the set of marked points on $\partial S_0$. Let $Q(i)=\sum_{j=1}^{i}q_i$. Then we see that $S_0\smallsetminus \bigcup \{\xi_l\}_l$ is an orientable crowned hyperbolic surface. The marked points are called \emph{vertices} and the connected components of $\partial_i S \smallsetminus \{\xi_{Q(i-1)+1},\ldots, \xi_{Q(i)}\}$ are called \emph{edges}. 

Firstly, we do a topological operation on $S_0$ called the \emph{doubling} to obtain a "bigger" hyperbolic surface with boundary and without any marked points. This is done in two steps:
\begin{itemize}
	\item[Step 1:] We truncate small neighbourhoods of every marked point along embedded arcs, denoted by $V:=\{r_l\}_{l=1}^{Q}$, that join the edges adjacent to the spikes. The elements of $V$ are called \emph{$V$-edges}. Let $S$ be the resulting surface. For $i=1,\ldots,n$, when $q_i>0$, the $i$-th boundary of $S$, $\partial_i S$, is the union of $2q_i$ segments alternately partitioned into $V$-edges and the truncated boundary edges of $S_0$. When, $q_i=0$, $\partial_i S=\partial_i S_0$. The truncated boundary edges along with any closed loop in $\partial S_0$ are called $E$-edges.
	
	\item[Step 2:] Then we take a copy $S'$ of $S$ and glue it to $S$ along the $V$-edges. The final surface, denoted as $\Sigma:=S\sqcup S'/\sim$, has genus $2g$, with $2n+Q$ boundary components. If $\partial_i S_0$ had $q_i>0$ $E$-edges, then after gluing we get $q_i$ boundary components made out of two copies of every $E$-edge. 
\end{itemize}

\begin{figure}
	\frame{\includegraphics[width=7cm]{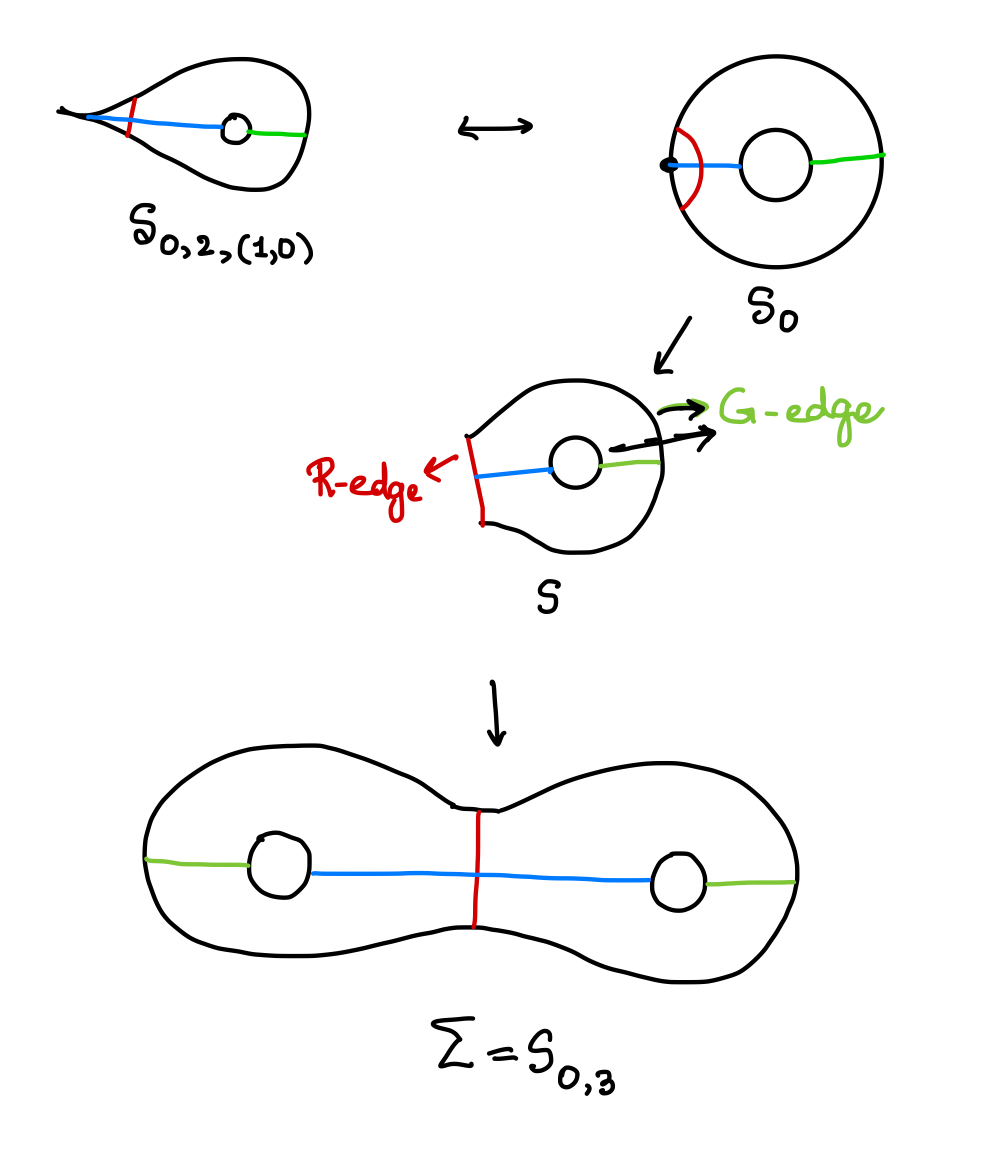}}
	\centering
	\caption{Doubling operation}
	\label{double}
\end{figure}

We get the Euler characteristic of the surface $\Sigma$, $\chi(\Sigma)=2-4g-2n<0$. So, it is hyperbolic. Since there are no spikes, we can consider all complete hyperbolic metrics with totally geodesic boundary. Its deformation space $\tei {\Sigma}$ is an open ball of dimension $12g-6+6n$. 
The surface $\Sigma$ has a degree two symmetry $\iota\in \mcg {\Sigma}$ that exchanges the two surfaces $S$ and $S'$. 

Keeping this in mind, we construct an isomorphism, denoted by $h$, between the subcomplex $\mathrm{Fix}_{\iota}(\sac \Sigma)$ of the pruned arc complex of $\Sigma$, invariant under the involution $\iota$, and the pruned arc complex $\sac {S_0}$ of $S_0$ in the following way:
At first we define it on the 0-skeleton of the arc complex and then we extend it linearly to a generic point on the pruned arc complex.
\begin{itemize}
	\item Let $e$ be an arc joining a spike $\xi$ and an edge $l$ of $\partial S_0$. Then the Step 1 above truncates $e$; in $S$ it becomes an arc, again denoted by $e$, joining the corresponding $R$-edge and the initial $G$-edge $l$. Let $e'\in S'$ be the twin arc of $e$. Finally, after the Step 2, $e:=e\sqcup e'/\sim$ becomes the arc that joins the two copies of $l$ that form the totally geodesic boundary in $\Sigma$, and transverse to the $R$-edge.  It is preserved as a set by the involution. Define $h(e):=e''$.
	\item Let $e$ be an edge-to-edge arc not in $V$. So $e$ joins two distinct boundary edges of $\partial S_0$. The Step 1 doesn't change the arc $e$. It remains disjoint from its twin $e'\subset S'$ inside $\Sigma$. Define $h(e):=\frac{e+e'}{2}$.
\end{itemize}
The map is then extended linearly over any point $x\in \sac S$.

\begin{lemma}
	The map $h:\sac {S_0}\longrightarrow \mathrm{Fix}_{\iota}(\sac \Sigma)$ is an isomorphism.
\end{lemma}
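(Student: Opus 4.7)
The plan is to establish that $h$ is a simplicial isomorphism by verifying well-definedness, constructing an explicit inverse, and checking that both maps preserve the filling (big) condition. The structure closely parallels the preceding lemma for the orientation double cover.

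First, I would show $h$ is well-defined as a map into $\mathrm{Fix}_{\iota}(\sac \Sigma)$. Starting from a point $x \in \sac{S_0}$ with support $\sigma_x^{(0)} = \{\alpha_1,\ldots,\alpha_p\}$, I would verify that the image is supported on a big simplex of $\ac \Sigma$ and is fixed by $\iota$. The $\iota$-invariance is immediate from the definition of $h$: spike-to-edge arcs produce $\iota$-fixed arcs in $\Sigma$ (those transverse to a $V$-edge), while edge-to-edge arcs give the balanced pair $\tfrac{1}{2}([\alpha]+[\alpha'])$. To check bigness, I would analyse how the complementary disks of $\{\alpha_i\}$ in $S_0$ behave under the doubling: a disk containing exactly one decorated spike, once truncated by a $V$-edge in Step~1 and glued to its mirror in Step~2, becomes a single disk in $\Sigma$; a disk containing no spike contributes two disjoint disk copies. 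Hence the image family decomposes $\Sigma$ into disks.

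Next I would construct the inverse. Given $y \in \mathrm{Fix}_{\iota}(\sac \Sigma)$ lying in the interior of a big simplex with $0$-skeleton $\{\beta_1,\ldots,\beta_q\}$ and weights $\{s_j\}$, the $\iota$-invariance of $y$ forces $\iota$ to permute the $\beta_j$ while preserving their weights. Because the fixed set of $\iota$ is exactly the union of the $V$-edges, each connected arc $\beta_j$ either is setwise $\iota$-invariant, in which case it must cross a unique $V$-edge transversely, or it is disjoint from $V$ and paired with $\iota(\beta_j)$ of equal weight. I would project: an $\iota$-invariant $\beta_j$ descends to a spike-to-edge arc of $S_0$, and a swapped pair descends to a single edge-to-edge arc of $S_0$. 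Setting $h^{-1}(y)$ to the corresponding weighted combination on $S_0$ and invoking the doubling analysis in reverse, I would check that the resulting arcs decompose $S_0$ into disks with at most one decorated spike, so that $h^{-1}(y) \in \sac{S_0}$.

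Finally, $h$ and $h^{-1}$ are manifestly inverse on $0$-skeletons and extend affinely to simplices, so they assemble into a simplicial isomorphism and in particular a homeomorphism of the pruned arc complexes. The main obstacle, as in the earlier non-orientable case, is the topological bookkeeping relating \emph{at most one decorated spike per complementary disk} in $S_0$ to \emph{disks only} in $\Sigma$: one must ensure that the truncate-and-glue operation neither fuses two distinct complementary regions of $S_0$ into a non-disk region of $\Sigma$, nor arises from a big decomposition of $\Sigma$ whose projection contains a non-disk region or a disk with two or more decorated spikes. This relies crucially on the $V$-edges being precisely the fixed set of $\iota$ and on spike-to-edge arcs meeting $V$ transversely in a single point.
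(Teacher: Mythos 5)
Your proposal is correct and follows essentially the same route as the paper: define $h$ on skeleta, check bigness and $\iota$-invariance of the image, and build the inverse by the same dichotomy ($\iota$-fixed arcs crossing a $V$-edge descend to spike-to-edge arcs, swapped pairs of equal weight descend to edge-to-edge arcs). The only cosmetic difference is that the paper establishes bigness of $h(x)$ by contradiction (a non-trivial loop in the complement would have to come from $S_0$ or from the doubling), whereas you track the complementary regions directly; both arguments are fine.
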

\begin{proof}
	Firstly, we show that this map is well-defined. As discussed before, a point $x\in \sac {S}$ belongs to the interior of a unique simplex $\sigma_x$ of $\ac S$. In other words, 
	\[  x=\sum_{i=1}^{p} t_i\, e_i\text{, with }t_i\in (0,1), e_i\in \sigma_x^{(0)}, \text{ for every }i=1,\ldots,p\text{ and }\sum_i t_i=1.\] 
	The union $\bigcup\limits_i e_i$ of arcs decomposes the surface $S$ into topological disks with at most one vertex. 
	
	Let $y:=h(x)=\sum_{j=1}^{q} s_j\, \al_j$. Then from the definition of $h$, it follows that $s_j\in (0,1)$, and $ \al_j\in \ack 0 \Sigma,$ for every  $j=1,\ldots,q$. 
	The family of arcs $\cur{\al_j}_j$ decomposes the surface $\Sigma$ into topological disks.  Otherwise there is a connected component $K$ in the complement in $\Sigma$ such that $\fg K\neq \{1\}$. So it is possible to find a non-trivial simple closed curve $\ga$ in $K$. Then either there was a curve in the complement of $\cur{e_i}$ in $S$ such that $\ga$ is one of its copies or the curve $\ga$ was created from a vertex-to-vertex arc by the doubling operation. None of these two cases is possible because $x\in \sac {S_0}$ and by definition of the pruned arc complex of a surface with decorated spikes, the family of arcs $\{e_i\}_i$ decomposes the initial surface $S_0$ into disks with at most one vertex. So we have that $y$ is a point of $\sac \Sigma$.
	Finally, we verify that the point $h(x)$ is $\iota$-invariant. 
	\begin{align*}
		\iota(h(e))&=\left\{
		\begin{array}{ll}
			\frac{\iota(e)+\iota(e')}{2}, & \text{ if $e$ is edge-to-edge}\\
			\iota(e'')& \text{ if $e$ is edge-to-vertex},
		\end{array}
		\right.\\
		&=\left\{
		\begin{array}{l}
			\frac{e'+e}{2}, \\
			e'',
		\end{array}
		\right.\\
		&=h(e).
	\end{align*}
	The inverse: Start with $y\in \mathrm{Fix}_{\iota}(\sac \Sigma)$. Then there exists a unique simplex $\sigma_y$ such that $y\in \inte{\sigma_y}$, \ie \[  y=\sum_{j=1}^{q} s_j\, \al_j\text{, with }s_j\in (0,1), \al_j\in \sigma_y^{(0)}, \text{ for every } j=1,\ldots,q\text{ and }\sum_j s_j=1.\] 
	Since $\iota (y)=y$, for every $j\in \{1,\ldots,q\}$, either $\iota (a_j)=a_j$ or $\iota(a_j)=\al_k$ for some $k\in \{1,\ldots, q\}\smallsetminus\{i\}$. In the former case, there exist an edge-to-vertex arc $e_j$ in $S$ and its twin $e_j'$ in $S'$ such that $a_j=e_j\sqcup e'_j/\sim$. So we define $h^{-1}(s_ja_j):=s_je_j$. In the latter case, we must also have $s_j=s_k=:s_{jk}$. Suppose that $a_j\in S$ and $a_j\in S'$. Then define $h^{-1}(s_{jk}(a_j+a_k)):=2s_{jk}a_j.$
\end{proof}

Now we shall prove Theorem \ref{pacdeco}.
\begin{proof}[Proof of Theorem \ref{pacdeco}]
	From Theorem \ref{pacundeco}, we get that there is a $\mcg \Sigma$-invariant homeomorphism:
	\begin{equation}
		\mathrm{Fix}_\iota (\sac \Sigma)\cong \mathrm{Fix}_\iota(T(\Omega)).
	\end{equation}
	The subspace $\mathrm{Fix}_\iota(T(\Omega))$ is an open ball — it can be parametrised by the lengths of the geodesic arcs of an $\iota$-invariant triangulation of $\Sigma$. Finally, using the isomorphism $h$ from above we get that $\sac {S_0}$ is an open ball of dimension $6g-7+3n+2Q$.
	
\end{proof}
Using the same method as in the previous section, we get that
\begin{theorem}\label{pacdecon}
	The pruned arc complex $\sac \decogsn$ of a non-orientable surface $\decogsn$ with decorated spikes is an open ball of dimension $3h-7+3n+2Q$.
\end{theorem}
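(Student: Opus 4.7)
The plan is to reduce to the orientable decorated case (Theorem \ref{pacdeco}) via the orientation double cover, mirroring the strategy that Corollary \ref{pacundecon} used to reduce the non-orientable undecorated case to its orientable counterpart.

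First I would pass to the orientation double cover $\pi\colon \overline{\decogsn} \to \decogsn$. Since $\chi(\overline{\decogsn}) = 2\chi(\decogsn) < 0$, the cover is hyperbolic and orientable; an Euler-characteristic count identifies it as an orientable decorated surface of the form $\decogs$ with genus $h-1$ and $2n$ peripheral loops, where each horoball on $\decogsn$ lifts to a pair of horoballs decorating the two preimages of the corresponding spike, giving $2Q$ decorated spikes in total. Let $\Upsilon$ be the covering involution of $\pi$. Applying Theorem \ref{pacdeco} to this cover, $\sac{\overline{\decogsn}}$ is an open ball of dimension $6(h-1) - 7 + 3(2n) + 2(2Q) = 6h - 13 + 6n + 4Q$.

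Next I would define, exactly as in Corollary \ref{pacundecon}, a simplicial map $h\colon \sac\decogsn \to \mathrm{Fix}_{\Upsilon}(\sac{\overline{\decogsn}})$ sending a vertex $[\al]$ to $\tfrac{1}{2}([\al^1] + [\al^2])$, where $\al^1, \al^2$ are the two disjoint lifts of a permitted arc $\al$ — both finite edge-to-edge and infinite spike-to-edge arcs lift to two components since an arc is simply connected — and extending linearly. I would then check that $h$ is a well-defined isomorphism by verifying that a simplex $\sigma$ of $\ac\decogsn$ is filling (its arcs decompose $\decogsn$ into disks containing at most one decorated spike) if and only if the $\Upsilon$-invariant simplex spanned upstairs by the lifts is filling in the sense of Theorem \ref{pacdeco}. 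The inverse is constructed by descending $\Upsilon$-paired arcs to their common projection, as in Corollary \ref{pacundecon}.

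Finally, to identify $\mathrm{Fix}_{\Upsilon}(\sac{\overline{\decogsn}})$ as an open ball of the expected dimension, I would use the Harer-type parametrisation coming from the proof of Theorem \ref{pacdeco}: the $\Upsilon$-invariant subspace is parametrised by the lengths of arcs of a $\Upsilon$-invariant triangulation together with $\Upsilon$-invariant projective weights on the marked spikes, and these parameters cut out an open subball of dimension $3h - 7 + 3n + 2Q$. Composing with the isomorphism $h$ then yields the statement. The main subtlety I anticipate is the compatibility of the filling condition — "at most one decorated spike per complementary disk", which is more restrictive than its undecorated analogue — with the $\Upsilon$-equivariant correspondence between simplices. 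The argument is parallel to Corollary \ref{pacundecon}, but because the pruning rule for decorated surfaces allows infinite spike-to-edge arcs and constrains how decorated spikes are distributed among the complementary disks, the local check that filling lifts and descends correctly needs slightly more care than in the undecorated setting.
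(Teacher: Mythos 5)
Your proposal is correct and is essentially the paper's own (implicit) argument: the paper gives no written proof of Theorem \ref{pacdecon}, stating only that it follows ``using the same method as in the previous section,'' which is exactly your combination of passing to the orientation double cover $S_{h-1,2n}^{\vec q\sqcup\vec q}$ (decorated), applying Theorem \ref{pacdeco} there, and identifying $\sac\decogsn$ with the $\Upsilon$-fixed subcomplex via the vertex map $[\al]\mapsto\tfrac12([\al^1]+[\al^2])$ as in Corollary \ref{pacundecon}. Your dimension count and your flagged check that the filling condition (at most one decorated spike per complementary disk) lifts and descends under the simply-connected lifting of arcs and tiles are consistent with what the paper does in the analogous lemmas.
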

\subsection{Tiles}\label{tilestypes}
Let $S$ be a hyperbolic surface endowed with a hyperbolic metric $m\in \tei  S$. Let $\mathcal{K}$ be the set of permitted arcs for an arc complex $\ac S$ of the surface. Given a simplex $\sigma\subset \ac S$, the \emph{edge set} is defined to be the set \[ \ed:=\set{\al_g(m)\in \al | \al\in \sigma^{(0)}},\] where $\al_g(m)$ is a geodesic representative from its isotopy class. The set of all lifts of the arcs in the edge set in the universal cover $\wt S\subset \HP$ is denoted by $\led$. The set of connected components of the surface $S$ in the complement of the arcs of the edge set is denoted by $\tile$. The lifts of the elements in $\tile$ in $\HP$ are called \emph{tiles}; their collection is denoted by $\ltile$. They are topological disks.

The sides of a tile are either contained in the boundary of the original surface or they are the arcs of $\ed$. The former case is called a \emph{boundary side} and the latter case is called an \emph{internal side}. Two tiles $d,d'$ are called \emph{neighbours} if they have a common internal side. The tiles having finitely many edges are called \emph{finite}. 

If $\sigma$ has maximal dimension in $\ac S$, then the finite tiles can be of three types:
\begin{itemize}
	\item[Type 1:]  The tile has only one internal side, \ie it has only one neighbour.
	\item [Type 2:] The tile has two internal sides, \ie two neighbours.
	\item [Type 3:] The tile has three internal sides, \ie three neighbours.
\end{itemize}

\begin{remark}
	Any tile, obtained from a triangulation using a simplex $\sigma$, must have at least one and at most three internal sides. Indeed, the only time a tile has no internal side is when the surface is an ideal triangle. Also, if a tile has four internal sides, then it must also have at least four distinct boundary sides to accommodate at least four endpoints of the arcs. The finite arc that joins one pair of non-consecutive boundary sides lies inside $\mathcal{K}$. This arc was not inside the original simplex, which implies that $\sigma$ is not maximal. Hence a tile can have at most 3 internal sides.
\end{remark}

The following are the different types of tiles possible:
\begin{itemize}
	\item When there is only one internal side of the tile, that side is an edge-to-edge arc of the original surface. The tile contains exactly one decorated spike and two boundary sides.
	\item When there are two internal sides, one of them is an edge-to-vertex and the other one is of edge-to-edge type. So the tile contains a decorated spike. 
	\item There are two possibilities in this case: either all the three internal sides are of edge-to-edge type or two of them are edge-to-vertex arcs and one edge-to-edge arcx. In the former case, the tile does not contain any vertex whereas in the latter case it contains exactly one.
\end{itemize}
\section{Infinitesimal deformations of decorated hyperbolic surfaces}
\subsection{Tile maps}
In Section 4.2 of \cite{dgk}, the authors constructed a tile-wise constant map, valued in the set of Killing fields, with respect to a given infinitesimal deformation. This map plays a central role in the proof of our main theorem. We recall the definition of this map here.

Let $\sh$ be a hyperbolic surface with holonomy representation $\rho:\fg\sh\rightarrow \pgl$ and let $u$ be an infinitesimal deformation $\rho$. Let $\sigma$ be a cellulation of $\sh$ with vertex set $\mathcal{V_\sigma}$ and edge set $\mathcal{T}_\sigma$. Then we define a \emph{tile map} $\phi_u:\ltile \rightarrow \lalg$ is a map that satisfies:

\begin{itemize}
\item $(\rho,u)$-equivariance: $\forall \ga\in \fg S, d\in \ltile,$ one has $\phi_u(\rho(\ga)\cdot d)=u(\ga)+\mathrm{Ad}(\rho(\ga))(\phi_u(d)).$
\item Consistency around internal vertices: let $d_1,\ldots, d_k $ be the tiles around one vertex. Then $$\sum_{i=1}^{k} \phi_u(d_{j+1})-\phi_u(d_j)=0.$$
\end{itemize} 

Using this vocabulary we prove the following theorem that is used in proving the openness of the admissible cone.

\begin{theorem}\label{infdef}
	Let $m=[\rho,\mathbf x, \mathbf h]$ be a decorated metric on a decorated hyperbolic surface $\sh$. Then there exists a constant $K>0$ such that for all infinitesimal deformations $u\in \tang\sh$, we have \[ \left| \mathrm{d}l_{\be}(m)(u) \right| \leq K l_{\be}(m) ||u||,\]
	where $l_{\be}$ is the length of a horoball connection $\be$. 
\end{theorem}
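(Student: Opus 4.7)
The plan is to express $dl_\beta(m)(u)$ as an explicit rational function of the Minkowski data at the two endpoints of $\beta$ and then bound it tile-by-tile, exploiting the fact that $\beta$ traverses $O(l_\beta(m))$ tiles of any fixed triangulation of $\sh$.

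First, I would lift $\beta$ to a geodesic in $\HP$ whose two decorated endpoints are encoded by future-pointing light-like vectors $\mathbf v_1, \mathbf v_2 \in \Min$. By Definition~\ref{horolength}, $l_\beta(m) = \ln\bigl(-\tfrac{1}{2}\langle\mathbf v_1,\mathbf v_2\rangle\bigr)$, so differentiating along a one-parameter family representing $u$ yields
\[dl_\beta(m)(u) \;=\; \frac{\langle \dot{\mathbf v}_1, \mathbf v_2\rangle + \langle \mathbf v_1, \dot{\mathbf v}_2\rangle}{\langle \mathbf v_1, \mathbf v_2\rangle}.\]
Fix a triangulation $\sigma$ of $\sh$ and let $d_i$ denote the tile of the lift containing the $i$-th endpoint. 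Writing $\dot{\mathbf v}_i = \phi_u(d_i) \mcp \mathbf v_i + s_i\, \mathbf v_i$, with $\phi_u$ the tile map associated to $u$ and $s_i$ the derivative of the $\log$-scale of the $i$-th horoball (the component of $u$ along the $\R_{>0}$-fibre of $\tei\sh \to \tei\spike$ over that spike), and applying the identity $\langle \mathbf a\mcp\mathbf b,\mathbf c\rangle = \pm\det(\mathbf a,\mathbf b,\mathbf c)$ together with the antisymmetry of the determinant, I obtain
\[dl_\beta(m)(u) \;=\; s_1 + s_2 \pm \frac{\det\bigl(\phi_u(d_1)-\phi_u(d_2),\ \mathbf v_1,\ \mathbf v_2\bigr)}{\langle \mathbf v_1, \mathbf v_2\rangle}.\]

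Next, I would bound each summand separately. The $|s_i|\le C_1\|u\|$ follow directly from the definition of the norm on $\tang\sh$. For the main term, I would telescope along the sequence of tiles $T_0=d_1, T_1, \ldots, T_N=d_2$ that the lifted $\beta$ traverses: the consistency condition on $\phi_u$ around internal vertices, together with its $(\rho,u)$-equivariance, gives
\[\phi_u(d_1)-\phi_u(d_2) \;=\; \sum_{j=0}^{N-1}\bigl(\phi_u(T_j) - \phi_u(T_{j+1})\bigr),\]
each jump being bounded in any fixed Euclidean norm by $C_2\|u\|$, a consequence of the tile-map consistency condition on an edge being a specific Killing field depending only on the local geometry of $\sigma$. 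Combining with the Lagrange identity $\|\mathbf v_1\mcp\mathbf v_2\|_{\mathrm{Min}}^2 = \langle\mathbf v_1,\mathbf v_2\rangle^2$ for light-like vectors, plus a Euclidean-versus-Minkowski norm comparison on the space-like vector $\mathbf v_1\mcp\mathbf v_2$ (controlled in the regime $l_\beta \ge 0$), each summand is at most $C\|u\|\cdot|\langle\mathbf v_1,\mathbf v_2\rangle|$; dividing by $\langle\mathbf v_1,\mathbf v_2\rangle$ and summing over the $N \le C_3 \, l_\beta(m)$ tiles gives the claimed bound.

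The main obstacle will be obtaining uniform control on both the per-edge jump of $\phi_u$ and the crossing count $N$ as $\beta$ ranges over the infinite family $\mathcal H$. I would resolve this by passing to the decorated convex core obtained by excising the interiors of the decorating horoballs: it is compact and piecewise-geodesic, so any triangulation has bounded geometry on it, and the lower bound $l_\beta \ge l_{\min} > 0$ coming from the minimum distance between decorating horoballs allows all the per-step constants to be packaged into a single uniform $K$ depending only on $m$.
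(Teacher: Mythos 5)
Your proposal follows essentially the same route as the paper's proof: differentiate $l_\beta = \ln(-\tfrac12\langle\mathbf v_1,\mathbf v_2\rangle)$ via the tile map at the two endpoint tiles, telescope $\phi_u(d_1)-\phi_u(d_2)$ across the tiles traversed by the lifted $\beta$, bound each jump uniformly by finiteness of the tile types modulo $\rho(\pi_1)$, and use that the number of crossed tiles is $O(l_\beta(m))$. Your explicit inclusion of the horoball-rescaling terms $s_i$ and of the lower bound $l_\beta\ge l_{\min}>0$ are welcome refinements of details the paper leaves implicit, but they do not change the argument.
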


\begin{proof}
	Consider a lift $\tilde{\be}$ of the horoball connection $\be$ in the universal cover. It joins two decorated ideal points say $(p_1,h_1)$ and $(p_2,h_2)$. 
	\begin{figure}[!ht]
		\centering
		\frame{\includegraphics[width=8cm]{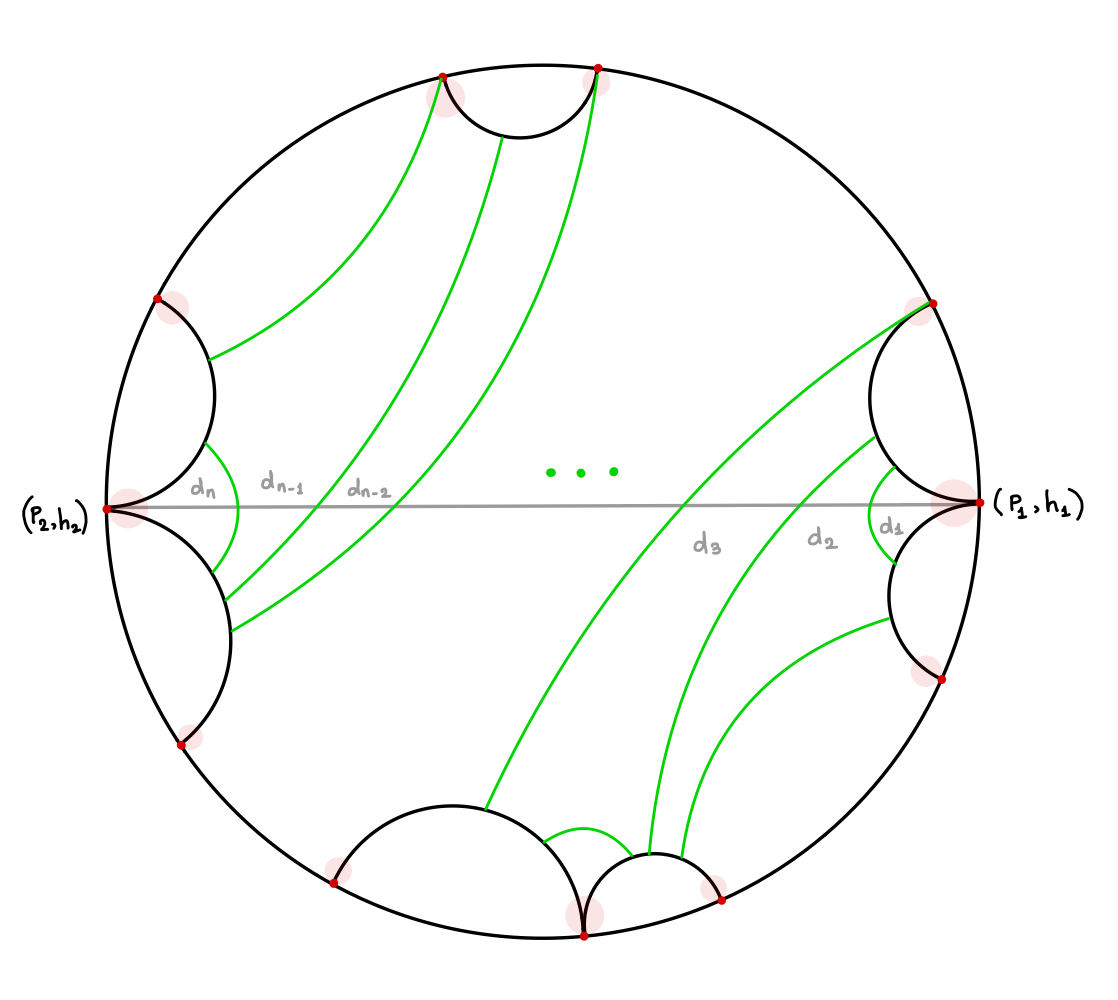}}
		\caption{Tiles containing a lift of a horoball connection}
		\label{constant}
	\end{figure}
	Now fix a triangulation $\sigma$ of $\sh$. Let $\ltile, \led$ be the associated sets of the lifts of the tiles and the edges, respectively. Let $d_1,\ldots,d_n\in \ltile$ be tiles such that $h_1\in d_1$, $h_2\in d_2$; for $j=1,\ldots,n-1$, the tile $d_j$ neighbours the tiles $d_{j+1}$ along an edge in $\led$ and contains a segment of $\tilde{\be}$. See Fig.\ref{constant}. The lifts of the edges of $\sigma$ are coloured in green. 
	
	Let $\phi_u: \ltile\longrightarrow \lalg$ be a tile map corresponding to $u$. Let $v_1, v_2 \in \Min$ be the two light-like vectors representing $(p_1,h_1)$ and $(p_2,h_2)$, respectively. Recall that the length of the horoball connection is given by $l_{\be}=\ln -\frac{\ang{v_1,v_2}}{2}$. Let $\be_t$ be the horoball connection obtained by infinitesimally deforming the metric on $\sh$ by $u$. Then the length of $\be_t$ is given by $l_{\be_t}=\ln -\frac{\ang{v^t_1,v^t_2}}{2}$, where
	\begin{align}
		v^t_1=v_1+t(\phi_u(d_1)\mcp v_1)+o(t^2),\\
		v^t_2=v_2+t(\phi_u(d_n)\mcp v_2)+o(t^2).
	\end{align}
	Let $\hat{n}$ be the unit space-like vector such that $v_2\mcp v_1=\ang{v_1,v_2} \hat{n}.$ Then we have, 
	\begin{align*}
		\ang{v^t_1,v^t_2}&=\ang{v_1,v_2}+t\{ \ang{v_1, \phi_u(d_n)\mcp v_2} +  \ang{v_2, \phi_u(d_1)\mcp v_1}\} + o(t^2)\\
		&= \ang{v_1,v_2}+t\{ \ang{\phi_u(d_n), v_2\mcp v_1} +  \ang{\phi_u(d_1), v_1\mcp v_2}\} + o(t^2)\\
		&=  \ang{v_1,v_2}+t\{ \ang{\phi_u(d_n)-\phi_u(d_1), v_2\mcp v_1} +o(t^2)\\
		&=\ang{v_1,v_2}\{1+t\ang{\sum \limits_{j=1}^{n-1}    \phi_u(d_{j+1})-\phi_u(d_j), \hat{n}}\}+ o(t^2),\\
		\Rightarrow l_{\be_t}&=l_{\be} + t\ang{\sum \limits_{j=1}^{n-1}    \phi_u(d_{j+1})-\phi_u(d_j) \hat{n}} +o(t^2). 
	\end{align*}
	
	Let $F$ be a fundamental domain for the action of $\rho(\fg{\sh}).$ Then for every $j=1,\ldots, n$, there exists $\ga_j\in\fg{\sh}$ such that $\rho(\ga_j)\cdot d_j=: \Delta_i\in F$. Then we have,
	\begin{align*}
		\ang{\phi_u(d_{j+1})-\phi_u(d_j), \hat{n}}=\ang{\phi_u(\Delta_{j+1})-\phi_u(\Delta_j), \mathrm{Ad}(\rho(\ga_j))\hat{n}}= \mathcal{O}(1),
	\end{align*}
	for every $j=1,\ldots, n$ because there are finitely many edges and vertices of the triangulation that are crossed by $\beta$. 
	
	The length of a horoball connection is the hyperbolic length of the geodesic segment subtended by the two horoballs. This segment lies in a compact part of every tile that it crosses. So the total number $n$ of the tiles $d_1,\ldots,d_n$ defined above is proportional to the length $l_\be$ of the horoball connection $\be$. 
	
	Hence from the above calculations we get that there exists a constant $K_u$ such that $l_{\be_t}\leq K_u l_{\be}$. This implies that \[\left| \mathrm{d}l_{\be}(m)(u) \right| \leq K_u l_{\be}(m),  \]
	
	Finally, let $\{ u_1,\ldots, u_N \}$ be a basis of $\tang\sh$. Let $K_{u_i}$ be the constant obtained by applying the above arguments to $u_i$. Let $K_m:=\max\{K_{u_i} \mid i=1,\ldots N\}$. Let $||\cdot||$ be any norm on $\tang\sh$. Let $u=\sum_{i=1}^{k}c_iu_i$. Then there exists $C>0$ such that $||u||>C\sum_{i=1}^{k}c_i$. Finally, we have 
	\[
	\left| \mathrm{d}l_{\be}(m)(u) \right| = \sum_{i=1}^{k} c_i \left| \mathrm{d}l_{\be}(m)(u_i) \right|\leq   l_\be(m) \sum_{i=1}^{k} c_i K_{u_i} \leq \frac{K_m}{C} ||u|| l_\be(m).
	\]
\end{proof}

Now we prove the openness of the admissible cone.
\begin{theorem}
	The subspace $\adm m$ is an open convex cone of $\tang \sh$.
\end{theorem}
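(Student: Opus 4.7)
The plan is to verify in turn that $\adm m$ is a cone, is convex, and is open. The first two properties are essentially immediate from the linearity of each differential $\mathrm{d}l_{\be}(m)\colon \tang\sh \to \R$; the substance of the statement lies in openness, where the key input is Theorem~\ref{infdef}.

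For the cone property, given $v \in \adm m$ with constant $c := \inf_{\be \in \mathcal{H}} \mathrm{d}l_{\be}(m)(v)/l_{\be}(m) > 0$ and $\lambda > 0$, the ratio $\mathrm{d}l_{\be}(m)(\lambda v)/l_{\be}(m)$ equals $\lambda$ times the ratio for $v$, so its infimum over $\be$ is at least $\lambda c > 0$. For convexity, if $v_1, v_2 \in \adm m$ have constants $c_1, c_2 > 0$ and $t \in [0,1]$, then for every $\be \in \mathcal H$ the ratio for $tv_1 + (1-t)v_2$ is the corresponding convex combination of ratios, bounded below by $\min(c_1, c_2) > 0$ independently of $\be$.

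For openness, I would fix $v \in \adm m$ with constant $c > 0$ and perturb by an arbitrary $w \in \tang\sh$. Applying Theorem~\ref{infdef} produces a constant $K = K(m) > 0$ such that $|\mathrm{d}l_{\be}(m)(w)| \leq K\, l_{\be}(m)\, \|w\|$ uniformly in $\be \in \mathcal H$. By linearity,
\[
\frac{\mathrm{d}l_{\be}(m)(v+w)}{l_{\be}(m)} \;=\; \frac{\mathrm{d}l_{\be}(m)(v)}{l_{\be}(m)} + \frac{\mathrm{d}l_{\be}(m)(w)}{l_{\be}(m)} \;\geq\; c - K\|w\|.
\]
Thus $\|w\| < c/K$ forces $v+w$ to remain admissible with a strictly positive constant, so the open ball of radius $c/K$ centred at $v$ sits inside $\adm m$.

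The main obstacle is precisely the uniformity of the estimate across the \emph{infinite} family $\mathcal H$: without the multiplicative scaling $|\mathrm{d}l_{\be}(m)(w)| \leq K\, l_{\be}(m)\, \|w\|$ that Theorem~\ref{infdef} provides, arbitrarily long horoball connections could in principle destroy the infimum under any nonzero perturbation. The overlapping-horoball regime is handled by the same argument for horoball connections of positive length, together with the observation that the additional requirements $\mathrm{d}l_{\be}(m)(v) > 0$ for non-positive-length horoball connections amount to only \emph{finitely many} strict open linear inequalities (one per overlapping pair of horoballs in a fundamental domain), and hence also define an open convex cone whose intersection with the previous one is still open and convex.
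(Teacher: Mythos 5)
Your proof is correct and the substantive part — openness via the uniform bound $|\mathrm{d}l_{\be}(m)(w)|\leq K\,l_{\be}(m)\,\|w\|$ of Theorem~\ref{infdef}, giving the ball of radius $\epsilon_0/K$ about $v$ — is exactly the argument in the paper. You are in fact slightly more complete than the paper, which leaves the (immediate) cone and convexity properties and the finitely-many-inequalities treatment of the overlapping-horoball regime implicit.
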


\begin{proof}
	Let $K>0$ be the constant in Theorem \ref{infdef}.
	Now let $v\in \adm m$. Let $\epsilon_0:=	\inf_{\be\in \mathcal{H}}\frac{\mathrm{d}l_{\be}(m)(v)}{l_{\be}(m)}>0.$ Then for any $u\in \tang\sh$ such that $||u||<\frac{\epsilon_0}{K}$, we have that for any horoball connection $\be$
	\begin{align*}
		\frac{\mathrm{d}l_\be(m)(v+u)}{l_\be(m)}&=\frac{\mathrm{d}l_\be(m)(v)}{l_\be(m)}+\frac{\mathrm{d}l_\be(m)(u)}{l_\be(m)}\\
		&\geq \epsilon_0-K||u||\\
		&>0.
	\end{align*}
	This proves that there exists a small neighbourhood around $v$ containing only admissible deformations.
\end{proof}

\subsection{Strip deformations and their properties}\label{sd}
In this section, we recapitulate strip deformations and tile maps. We point the reader towards Section 4 in \cite{ppstrip} for the definitions of strips, strip templates and for a more detailed discussion.  

	Given an isotopy class of arcs $\al$ of a hyperbolic surface $S$ and a strip template $\{(\al_g,\pal,\wal)\}_{\al\in \mathcal{K}}$ adapted to the nature of $\al$ for every $m\in \tei {S}$, 
	we define the \emph{strip deformation} along $\al$ to be a map
	\[
	F_{\al}:\tei S \longrightarrow \tei S
	\]
	where the image $F_{\al}(m)$ of a point $m\in \tei S$ is a new metric on the surface obtained by cutting it along the $m$-geodesic arc $\al_g$ in $\al$ chosen by the strip template and gluing a strip whose waist coincides with $\pal$. If the arc is finite (resp. infinite) then the strip added is of hyperbolic (resp. parabolic) type. 
	We define the \emph{infinitesimal strip deformation}
	
	\[
	\begin{array}{cc}
		f_{\al}:&\tei {S} \longrightarrow T\tei{S}\\
		&m\mapsto [m(t)]
	\end{array}
	\]
	where the image $m(\cdot)$ is a path in $\tei {S}$ such that $m(0)=m$ and $m(t)$ is obtained from $m$ by strip deforming along $\al$ with a fixed waist $\pal$ and the width as $t\wal$.

Let $m=( [\rho, \vec{x}])\in \tei \sh$ be a point in the deformation space of the surface, where $\rho$ is the holonomy representation. Fix a strip template $\{\st \}$ with respect to $m$. Let $\sigma$ be a simplex of $\ac \sh$. Given an arc $\al$ in the edgeset $\ed$, there exist tiles $\del, \del'\in \tile$ such that every lift $\wt \al$ of $\al$ is the common internal side of two lifts $\wt \del,\wt {\del'}$ of the tiles. Also, $p_{\ga\cdot\wt\al}=\ga\cdot p_{\wt\al}$, for every $\ga\in \fg\sh$. Then the infinitesimal deformation $f_{\al}(m)$ tends to pull the two tiles $ \del$ and ${\del'}$ away from each other due to the addition of the infinitesimal strip.  Let $u$ be a infinitesimal strip deformation of $\rho$ caused by $f_{\al}(m)$. Then we have a $(\rho, u)$-equivariant tile map$\phi:\ltile \rightarrow\lalg$ such that for every $\ga\in \fg \sh$, 
\begin{align}\label{tile}
	\phi(\h\ga\cdot \wt\del)-\phi(\h\ga\cdot\wt\del')=\h\ga\cdot  v_{\wt\al},
\end{align}
where $v_{\wt\al}$ is the Killing field in $\lalg\simeq \mathscr X$ corresponding to the strip deformation $f_{\wt\al}(m)$ along a geodesic arc $\wt\al_g$, isotopic to $\wt\al$, adapted to the strip template chosen, and pointing towards $\wt\del$:
\begin{itemize}
	\item If $\al$ is a finite arc, then $v_{\wt\al}$ is defined to be the hyperbolic Killing vector field whose axis is perpendicular to $\wt\al_g$ at the point $\wt\pal$, whose velocity is $\wal$.
	\item If $\al$ is an infinite arc joining a spike and a boundary component, then $\isda$ is a parabolic strip deformation with strip template $(\al,\pal)$, and $v_{\wt\al}$ is defined to be the parabolic Killing vector field whose fixed point is the ideal point $\pal$ where the infinite end of $\wt\al$ converges.
\end{itemize}
\begin{remark}
	Such a strip deformation $f_{\al}:\tei S\longrightarrow\tang S$ does not deform the holonomy of a crowned surface if $\al$ is completely contained outside the convex core of the surface. However, it does provide infinitesimal motion to the spikes.
\end{remark}
More generally, a linear combination of strip deformations $\sum_{\al} c_\al f_{\al}(m)$ along pairwise disjoint arcs $\{\al_i\}\subset \ed$ imparts motion to the tiles of the triangulation depending on the coefficient of each term in the linear combination. A tile map corresponding to it is a $(\rho, u)$-equivariant map $\phi:\ltile \rightarrow\lalg$ such that for every pair $\del, \del ' \in \tile$ which share an edge $\al\in \ed$, the equation \ref{tile} is satisfied by $\phi$. 

\begin{definition}\label{ism}
	The \emph{infinitesimal strip map} is defined as:
	\[
	\begin{array}{ccrcl}
		\mathbb{P}f& : &	\sac S & \longrightarrow & \ptan {S}\\
		& &\sum\limits_{i=1}^{\dim \tei S} c_i \al_i&\mapsto&\bra{\sum\limits_{i=1}^{\dim \tei S}c_i f_{\al_i}(m)} 
	\end{array}
	\]
\end{definition}
where $\sac S$ is the pruned arc complex of the surface (Definition \eqref{pac}). 

Next we recall the strip width function.
Let $\sh$ be a surface with decorated spikes with a metric $m$ and a strip template $\st$. Let  a strip deformation $\isda$ along a finite arc $\al$, with strip template $\st$. Let $w_\al(p)$ be the width of the strip at the point $p\in \al_g$. Suppose that $v_{\wt\al}$ is the Killing field acting across $\wt{\al_g}$ due to the strip deformation. Then $w_\al(p)=\norm{v_{\wt\al}\mcp p}.$

\begin{definition}
	Let $\sh$ be a surface with decorated spikes with a metric $m$ and corresponding strip template $\{\st \}$. Let $x=\sum_{i=1}^{N_0} c_i\al_i$ be a point in the pruned arc complex $\sac \sh$. Then the \emph{strip width function} is defined as:
	\[ 
	\begin{array}{rrcl}
		w_x:&\supp x&\longrightarrow&\R_{>0}\\
		& p&\mapsto&c_iw_{\al_i}(p),
	\end{array}
	\]
\end{definition}

The following theorem was proved in the case of decorated polygons in \cite{ppstrip}.
\begin{theorem}\label{lenderiv}
	Let $\sh$ be a surface with decorated spikes endowed with a decorated metric $m$ and a corresponding strip template $\st$. Let $x\in \sac \sh$ and $\ga$ be a closed curve or a horoball connection of $\sh$ intersecting $\supp x$. Then, 
	\begin{equation}\label{lengthderiv}
		\mathrm{d}l_{\ga}(f(x))=\sum\limits_{p\in \ga \cap \supp x} w_x(p) \sin \angle_p( \al_g, \supp x) > 0.
	\end{equation}  
\end{theorem}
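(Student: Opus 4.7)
My plan is to lift $\gamma$ to $\HP$ and localise the length derivative at each transverse crossing with $\supp x$, using the tile map formalism developed above. I would let $u \in \tang \sh$ be the cocycle representing $f(x) = \sum_i c_i f_{\al_i}(m)$, and let $\phi : \ltile \longrightarrow \lalg$ be a tile map for $u$; by linearity and equation \eqref{tile}, the jump of $\phi$ across a lifted edge $\tilde{\al}_{i,g}$ equals $c_i v_{\tilde\al_i}$, where $v_{\tilde\al_i}$ is the Killing field determined by the fixed strip template. Next I would fix a lift $\tilde\gamma$ of $\gamma$: the axis of $\rho(\gamma)$ if $\gamma$ is a closed geodesic, or the geodesic between lifted decorated ideal points with light-like representatives $v_1, v_2$ if $\gamma$ is a horoball connection. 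Then $\tilde\gamma$ meets finitely many lifted arcs of $\supp x$, at points $\tilde p_1, \ldots, \tilde p_{n-1}$, cutting $\tilde\gamma$ into segments contained in consecutive tiles $d_1, \ldots, d_n \in \ltile$.

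I would then invoke the appropriate length-variation formula on each side. For a closed geodesic, the Margulis-invariant computation of \cite{glm}, rewritten through tile maps as in \cite{dgk}, gives
\[
dl_\gamma(u) = \ang{\phi(\rho(\gamma)\cdot d_1) - \phi(d_1),\, \hat n_\gamma},
\]
where $\hat n_\gamma$ is the unit space-like vector generating the infinitesimal translation along $\tilde\gamma$. For a horoball connection, the computation performed inside the proof of Theorem \ref{infdef} yields the analogous identity $dl_\gamma(u) = \ang{\phi(d_n) - \phi(d_1), \hat n_\gamma}$, with $\hat n_\gamma$ determined by $v_2 \mcp v_1 = \ang{v_1, v_2}\hat n_\gamma$. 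Telescoping along $\tilde\gamma$ using the jump rule then produces
\[
dl_\gamma(f(x)) = \sum_{j=1}^{n-1} c_{i(j)} \ang{v_{\tilde\al_{i(j)}},\, \hat n_\gamma}.
\]

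The heart of the proof will be a pointwise geometric identity at each crossing $p$:
\[
\ang{v_{\tilde\al},\, \hat n_\gamma} = w_\al(p)\, \sin \angle_p(\al_g, \gamma).
\]
For a finite arc, $v_{\tilde\al}$ is a hyperbolic Killing field whose axis meets $\tilde\al_g$ perpendicularly and whose infinitesimal translation at $p$ has magnitude $w_\al(p)$; decomposing this velocity into components along and perpendicular to $\tilde\gamma$ produces the factor $\sin\angle_p$. For an infinite arc, $v_{\tilde\al}$ is parabolic fixing the spike at the infinite end of $\tilde\al_g$, and I would evaluate $v_{\tilde\al} \mcp p$ and decompose similarly, verifying that its norm matches the strip template width $w_\al(p)$. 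Positivity of each summand would then follow from $c_{i(j)} > 0$ (interior of $\sigma_x$), $w_\al(p) > 0$ (strip template), and $\sin\angle_p > 0$ (transversality of distinct geodesic representatives).

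The hardest step will be the parabolic/horoball-connection case: when $\gamma$ is itself a horoball connection ending at the spike fixed by some $v_{\tilde\al_i}$, both the Killing field and an endpoint of $\gamma$ live at infinity, and I must check that the contribution to $dl_\gamma$ still localises to the transverse crossing in the compact part of a tile. As in the proof of Theorem \ref{infdef}, this is where finite-area and transversality of the triangulation are essential, ensuring that the pointwise decomposition is valid and that no asymptotic cancellation at the spike spoils positivity.
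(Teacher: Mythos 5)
The paper never actually proves this theorem in the present text: it is imported from \cite{ppstrip}, where it is established for decorated (once-punctured) polygons, so there is no in-paper argument to compare yours against. That said, your proposal is correct and is exactly the proof that the paper's machinery is set up to deliver: the telescoping identity $\mathrm{d}l_{\beta}(u)=\langle\phi(d_n)-\phi(d_1),\hat n\rangle$ is literally extracted from the computation in the proof of Theorem \ref{infdef} for horoball connections, the closed-geodesic case is the Goldman--Margulis first-variation formula rewritten through the $(\rho,u)$-equivariance of the tile map, and the jump rule $\phi(d_{j+1})-\phi(d_j)=c_{i(j)}v_{\widetilde\alpha_{i(j)}}$ is equation \eqref{tile}. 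Two points you should make explicit to close the argument. First, in the pointwise step the velocity $v_{\widetilde\alpha}\mcp p$ must be orthogonal to $\widetilde\alpha_g$ at \emph{every} $p\in\widetilde\alpha_g$, not only at the waist: in the hyperbolic case this holds because a geodesic orthogonal to the axis meets every equidistant curve orthogonally, and in the parabolic case because $\widetilde\alpha_g$ ends at the fixed ideal point and is therefore orthogonal to the horocyclic orbits; combined with $\lVert v_{\widetilde\alpha}\mcp p\rVert=w_\alpha(p)$ this is precisely what makes the longitudinal component equal $w_\alpha(p)\sin\angle_p$ rather than merely bounded by it, and the sign is positive because $v_{\widetilde\alpha}$ points into the tile being entered. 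Second, the asymptotic difficulty you flag as the hardest step is largely vacuous: two distinct geodesics sharing an ideal endpoint are disjoint in $\HP$, so a lift of $\gamma$ never meets a lift of an infinite arc converging to the same lifted spike, every crossing is a transverse interior point, and finiteness of the crossings is the same finiteness already used in the proof of Theorem \ref{infdef}.
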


\begin{lemma}\label{inject}
	Let $x$ be a point of a hyperbolic surface $S$. Let $B$ be a geodesic ball centered at $x$ with radius $r$, where $r$ is the injectivity radius of the surface. Then for every pair of distinct lifts $B_1,B_2$ of $B$ in the universal cover of $S$, we have that $B_1\cap B_2=\emptyset$.
\end{lemma}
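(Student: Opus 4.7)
The plan is to leverage the standard characterisation of the injectivity radius as (a lower bound for) half the length of the shortest non-trivial geodesic loop at the point, and to translate any non-empty intersection of lifts into such a loop that is too short.

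Let $\pi:\widetilde S\to S$ denote the universal covering map, and fix a lift $\tilde x\in \pi^{-1}(x)\subset \HP$. Since $S$ is hyperbolic, the deck group $\Gamma:=\pi_1(S)$ acts freely and properly discontinuously on $\widetilde S$ by isometries of $\HP$ (there is no torsion). Consequently, the preimage $\pi^{-1}(x)$ is exactly the orbit $\Gamma\cdot\tilde x$, and because $\pi$ is a local isometry, the collection of lifts of the geodesic ball $B=B(x,r)$ is exactly the family
\[
\bigl\{\,B(\gamma\tilde x,r)\;:\;\gamma\in\Gamma\,\bigr\}.
\]
Two such lifts $B_1=B(\gamma_1\tilde x,r)$ and $B_2=B(\gamma_2\tilde x,r)$ are distinct iff $\gamma_1^{-1}\gamma_2\notin\mathrm{Stab}(\tilde x)=\{1\}$, i.e.\ iff the isometry $\gamma:=\gamma_1^{-1}\gamma_2$ is non-trivial.

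Suppose for contradiction that $B_1\cap B_2\neq\emptyset$, and pick a point $y$ in the intersection. The triangle inequality gives
\[
d_{\HP}(\gamma_1\tilde x,\gamma_2\tilde x)\le d_{\HP}(\gamma_1\tilde x,y)+d_{\HP}(y,\gamma_2\tilde x)<2r.
\]
Applying the isometry $\gamma_1^{-1}$ yields $d_{\HP}(\tilde x,\gamma\tilde x)<2r$ with $\gamma\neq 1$. The geodesic segment from $\tilde x$ to $\gamma\tilde x$ then projects to a non-trivial geodesic loop at $x$ in $S$ of length strictly less than $2r$.

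This contradicts the hypothesis that $r$ is the injectivity radius, because $r\le \tfrac{1}{2}\ell$ where $\ell$ is the length of the shortest non-trivial geodesic loop based at $x$ (this is the standard identification of the injectivity radius at a point in a complete non-positively curved manifold, obtained from the fact that the exponential map has no conjugate points on $\HP$, so injectivity of $\exp_x$ on the open ball of radius $r$ is equivalent to the absence of geodesic loops at $x$ of length $<2r$). Hence $B_1\cap B_2=\emptyset$, as claimed. The only delicate point — and really the only place one has to be careful — is this last sentence: one must use the strict inequality coming from the open balls to rule out the borderline case where a geodesic loop of length exactly $2r$ exists, so that the hypothesis ``$r$ equals the injectivity radius'' is strong enough to conclude disjointness rather than merely empty interiors.
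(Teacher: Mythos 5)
Your proof is correct, and it takes a genuinely different (metric) route from the paper's (topological) one. You reduce everything to the triangle inequality: a point of $B_1\cap B_2$ forces $d(\tilde x,\gamma\tilde x)<2r$ for some non-trivial deck transformation $\gamma$, which projects to a non-trivial geodesic loop at $x$ of length $<2r$, contradicting $r=\mathrm{inj}(S)\le\mathrm{inj}(x)=\tfrac12\ell(x)$ — the Klingenberg-type identity that is valid here because $\mathbb{H}^2$ has no conjugate points. The paper instead picks $\tilde z\in B_1\cap B_2$, joins the centre of $B_2$ to $\tilde z$ and to $\gamma\cdot\tilde z$ by geodesic segments, projects their union to a loop contained in the embedded ball $B=\exp_x(B(0,r))$, concludes that this loop is null-homotopic since $B$ is simply connected, and derives a contradiction from unique path lifting because the lifted path does not close up. Your version is more quantitative (disjointness of the lifts is exactly the statement $d(\tilde x,\gamma\tilde x)\ge 2r$ for all $\gamma\neq 1$) but leans on the characterisation of the injectivity radius by shortest geodesic loops; the paper's version uses only the defining property that $\exp_x$ embeds $B(0,r)$, plus covering-space theory. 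Your closing remark about open versus closed balls is well taken: the strict inequality is indeed what disposes of the borderline case $d(\tilde x,\gamma\tilde x)=2r$. One point worth making explicit, which affects both arguments equally since the surfaces here have geodesic boundary and spikes: the universal cover is a convex region of $\mathbb{H}^2$, so the geodesic segment from $\tilde x$ to $\gamma\tilde x$ stays in the cover and genuinely projects to a loop on $S$.
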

\begin{proof}
	Let $x,B,B_1,B_2$ be as in the hypothesis. 
	Then, $B=\exp_x(B(0,r))$. For $i=1,2$, let $x_i\in \HP$ be the center of $B_i$. Then $x_2=\ga\cdot x_1$ for some $\ga\in \fg S$.
	
	If possible, let $\wt z\in B_1\cap B_2$. Since the action of $\fg S$ is free, the point $z':=\ga\cdot\wt z$ lies inside $B_2$. Join $x_2$ with $z$ and $z'$ by geodesic segments $l_1,l_2$, respectively. On the surface $S$, the path $l_1\cup l_2$ is mapped to a loop based at $z$. Since $B$ is the embedding of a ball by the exponential map, this loop is trivial. So $l_1\cup l_2$ is a loop in $\wt S$ based at a lift $\wt z$, which is a contradiction.
\end{proof}

\begin{lemma}\label{ineq}
	Let $\sh$ be a hyperbolic surface with decorated spikes and a metric $m\in \tei \sh$.
	Then there exists $M>0$ such that for every non-trivial closed geodesic $\ga$ and for every non-trivial geodesic arc $\al$, the following inequality holds:
	\begin{align}
		\sum_{p\in \ga\cap\al} w_\al (p)\leq M l_{\ga}(m).
	\end{align}
\end{lemma}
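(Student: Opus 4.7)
The strategy decomposes the estimate into two ingredients: (i) a uniform upper bound $W$ on the width $w_\al(p)$ at every intersection point, and (ii) a linear bound $|\ga \cap \al| \leq c \cdot l_\ga(m)$ on the number of intersections; combining them yields $M = cW$. Throughout, I would observe that since $\ga$ is a closed geodesic it lies entirely inside the convex core $C$ of $\sh$, which is compact (as $\sh$ is crowned), so every intersection $p\in\ga\cap\al$ lies in $C$.

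For the width bound, I would fix a strip template placing each waist $\pal$ inside (or near) $C$ and normalizing $\wal$ to be bounded uniformly in $\al$. For a finite arc, the hyperbolic Killing field associated to the strip satisfies $\|v_{\wt\al}\mcp p\|=\wal\cosh d(p,\pal)$, so $w_\al(p)\leq\wal\cosh(\mathrm{diam}\,C)$ for $p\in C$. For an infinite arc, the parabolic Killing field yields an analogous exponential estimate that is again bounded on $C$. This produces the uniform constant $W$.

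For the intersection count, I would use Lemma \ref{inject}: by compactness of $C$ there exists $r>0$ such that every ball $B(p,r)$ with $p\in C$ embeds in $\sh$. Lifting to $\HP$, fix a lift $\wt\ga$ of $\ga$; intersections of $\ga$ with $\al$ correspond to intersections of $\wt\ga$ with distinct lifts $\wt\al_i$ of $\al$, modulo the cyclic action of the holonomy of $\ga$. Since $\al$ is embedded, these lifts are pairwise disjoint geodesics in $\HP$. The main obstacle is that the naive assertion — consecutive intersections on $\wt\ga$ are $2r$-separated — fails because several distinct lifts of $\al$ may pass through the same embedded ball. I would resolve this by showing that inside any such ball, the lifts of $\al$ present are translates by finitely many elements $g\in\fg\sh$ (namely those with bounded displacement at the basepoint, a set controlled only by $r$ and $\rho$). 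Combined with a finite cover of $C$ by embedded balls and the fact that $\ga$ spends length proportional to $r$ per ball visit, this yields $|\ga\cap\al|\leq c\, l_\ga(m)$, whence $\sum_{p\in\ga\cap\al}w_\al(p)\leq Wc\, l_\ga(m)$.
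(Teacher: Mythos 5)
Your decomposition into a uniform width bound $W$ and a linear intersection count $|\ga\cap\al|\le c\,l_{\ga}(m)$ does not work, and the second ingredient is the fatal one. No constant $c$ independent of $\al$ can bound the number of intersection points by $c\,l_{\ga}(m)$: for a fixed closed geodesic $\ga$, the arcs obtained from a fixed arc by high powers of a Dehn twist about a curve meeting $\ga$ are still embedded, but their geodesic representatives become arbitrarily long and meet $\ga$ in arbitrarily many points, so $|\ga\cap\al|$ grows with the complexity of $\al$ for fixed $\ga$. Your proposed repair (only finitely many group elements of bounded displacement can carry a lift of the embedded ball to a nearby lift) controls the wrong quantity: what you need to bound is the number of strands of the single arc $\al$ returning to a fixed embedded ball of the convex core, and that number grows with $l_{\al}$ and is not controlled by $r$ and $\rho$ alone. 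The first ingredient is also problematic for finite arcs: in $w_\al(p)=\wal\cosh d(p,\pal)$ the distance is the one between the lifts $\wt p$ and $\wt\pal$ lying on the same geodesic lift $\wt\al$, i.e.\ the arc-length coordinate, and this is not bounded by the diameter of the convex core even when both points project into it; for a long arc the width near the endpoints is of order $e^{l_{\al}/2}$.

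The paper's proof is organised differently: the finiteness of the sum comes from a decay property of $w_\al$ \emph{along} $\al$, not from a count of intersection points. One fixes a unit subsegment $\eta$ of $\ga$ and covers a lift $\wt\al$ by consecutive tangent balls of radius $r_0/2$, where $r_0$ is the injectivity radius; Lemma \ref{inject}, applied to the doubled balls, shows that at most $M_0=[1/r_0]+1$ balls of a given orbit can meet a lift of $\eta$, so each ball contributes at most $M_0$ points of $\eta\cap\al$. The key point, available for the spike-to-edge arcs treated there, is that $w_{\wt\al}$ decreases exponentially along $\wt\al$ (it equals $e^{L}$ in the arc coordinate $L\le 0$), so the maxima of $w_{\wt\al}$ over successive balls decay geometrically with ratio $e^{-r_0}$ and the total contribution of one unit segment is bounded by the geometric series $M_0/(1-e^{-r_0})$, independently of how many intersection points there are. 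The factor $l_{\ga}(m)$ enters only at the very end, by cutting $\ga$ into roughly $l_{\ga}(m)$ unit segments. To salvage your approach you would have to replace the false uniform intersection count by an argument that bounds the weighted sum directly using this decay (or the corresponding normalisation for finite arcs), rather than bounding the number of its terms.
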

\begin{figure}[h!]
	\includegraphics[width=7cm]{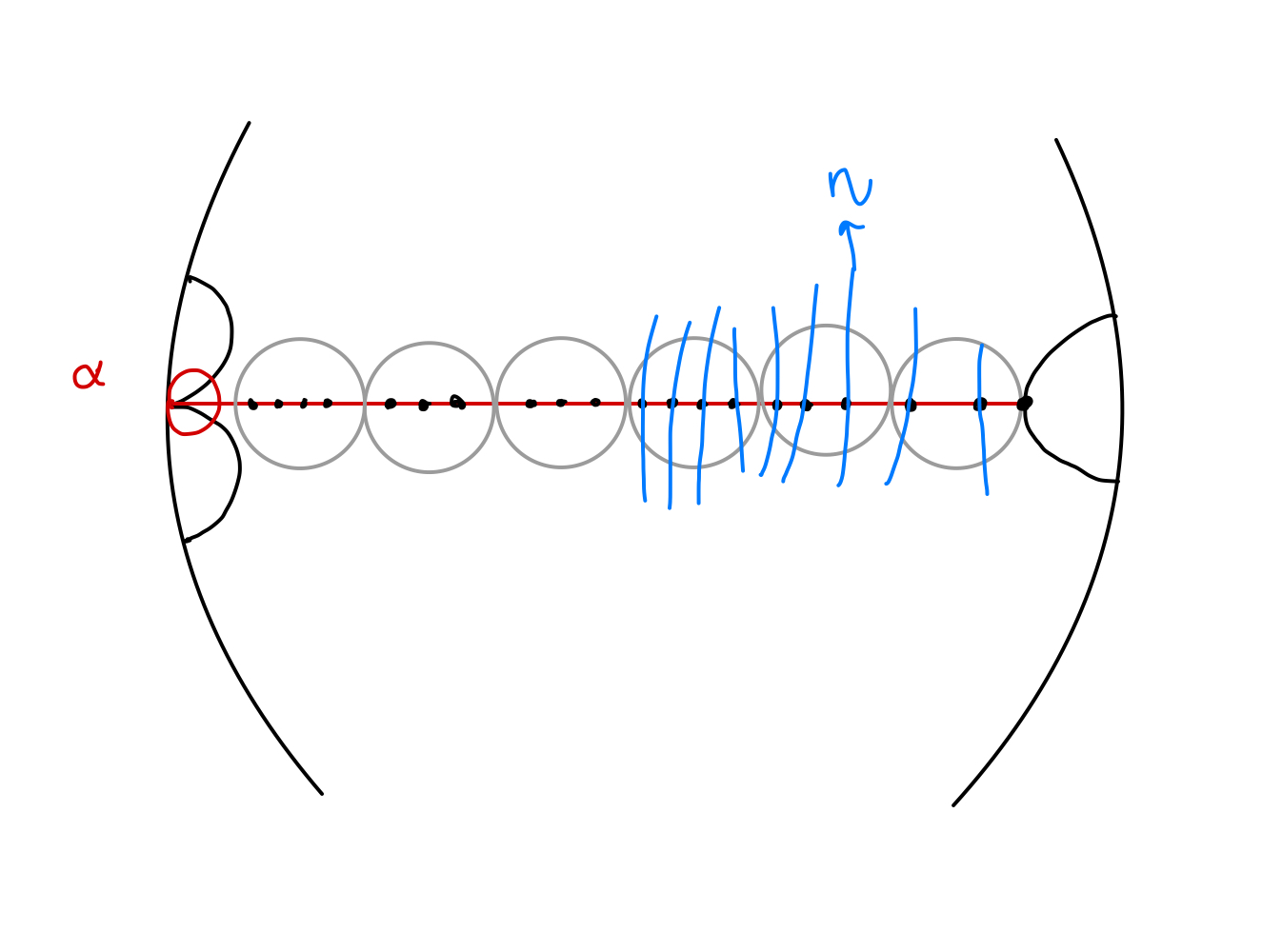}
	\centering
	\label{impact}
	\caption{Intersection of the arc and the curve}
\end{figure}

\begin{proof}
	Let $\ga$ and $\al$ be as in the hypothesis. We further suppose that $\al$ is an infinite arc joining a decorated spike and a boundary component of $S$.
	We prove that there exists a positive constant $M_0$ such that for every unit segment $\eta$ and for every arc $\al$, the following inequality holds.
	\begin{align}
		\sum_{p\in \eta\cap\al} w_\al (p)\leq M_0.
	\end{align}
	Given such a unit segment $\eta$, let $\eta\cap \al=\{p_1,\ldots,p_k\}$. We order them so that $p_i$ lies closer to the horoball than $p_j$ if and only if $i>j$.
	Take a lift $\wt\al$ of $\al$ in the universal cover inside $\HP$. Since $\al$ is embedded, for every $i=1,\ldots,k$, there is exactly one lift $\wt {p_i}$ of $p_i$ that lies on $\wt\al$. Let $r_0$ be the injectivity radius of the surface. Cover the entire arc $\wt\al$ with balls $\{B_j\}_{j\in J}$ of radius $r:=\frac{r_0}{2}$, such that two consecutive balls are tangent to each other. 
	
	We claim that each ball contains at most $M_0$ number of intersection points, where $M_0:=[\frac{1}{r_0}]+1$. Consider one lift of $\eta$ and a ball $B_r$ in the above covering. Then a reformulation of the claim is that there are at most $M_0$-many balls that are in the same orbit as $B$. Now each of these balls are contained in the bigger ball $B_{2r}$ with the same center and of radius $r_0$ and by Lemma \ref{inject}, no two of them can intersect. Thus the maximum number of balls $B_r$ in the same orbit intersecting $\eta$ is $M_0$.
	
	Next, we know that $w_{\wt\al} (\wt p_i)=\e^{L_i}$, where $L_i$ is the negative arc coordinate of $\wt {p_i}$ along $\wt\al$. Then, $w_{\wt\al} (\wt {p_i})$ decreases exponentially as $i$ increases. In every ball, the maximum value of $w_{\wt\al}$ is attained at the rightmost point. Two such points in two consecutive balls are at most $r_0$ distance apart because the balls are of radius $\frac{r_0}{2}$ and tangent to each other. Inside the first ball, the maximum value of $w_{\wt\al}$ can be at most $1$, if the point of unit impact is an intersection point. 
	
	So we have,
	\begin{align}
		\sum_{p\in \eta\cap\al} w_\al (p)=\sum_{i=1}^k w_{\wt\al} (\wt p_i)=\sum_{i=1}^k \e^{L_i}\leq M_0(1+ \e^{-r_0}+\ \e^{-2r_0}+\ldots)=\frac{M_0}{1-\e^{-r_0}}.
	\end{align}
	
	Finally, taking the sum over all the unit segments of $\ga$, we get that 
	\begin{align*}
		\sum_{p\in \ga\cap\al} w_\al (p)\leq M l_{\ga}(m),
	\end{align*}
	where $M:=\frac{M_0}{1-\e^{-r_0}}$.
\end{proof}

\begin{lemma} \label{maxangle}Let $S_c$ be a compact hyperbolic surface equipped with a metric $m\in \tei {S_c}$.
	Then for every $\epsilon>0$ there exists $M>0$ such that whenever a geodesic arc $\al$ has $m$-length $l_\al(m)>M$, there exists a closed geodesic $\ga$ on the surface such that it intersects $\al$ as well as every geodesic arc, that is disjoint from $\al$, at angle less than $\epsilon$.
	
\end{lemma}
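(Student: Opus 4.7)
The plan is to construct $\ga$ as a closed geodesic that shadows $\al$ over most of its length, and then deduce the angle bounds from this shadowing.  Fix $\epsilon > 0$ and choose an auxiliary $\eta > 0$ (depending on $\epsilon$, to be specified at the end).  By compactness of the unit tangent bundle $T^1 S_c$, a pigeonhole argument, covering $T^1 S_c$ by finitely many $\eta$-balls in a background Riemannian metric and tracking the tangent vector of $\al$ as a function of arc length, yields an $M > 0$ such that any unit-speed geodesic arc $\al$ with $l_\al(m) > M$ contains two parameter values $t_1 < t_2$, with $t_2 - t_1 \geq M/2$, at which the unit tangent vectors of $\al$ are within $\eta$ of one another in $T^1 S_c$.

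Lift $\al$ to a geodesic $\tilde{\al}$ in $\HP$.  The near-coincidence at $t_1$ and $t_2$ produces an element $g \in \rho(\fg{S_c})$ whose action maps a lift of the tangent of $\al$ at $t_1$ to within $\eta$ of a lift at $t_2$ in $T^1\HP$.  The isometry $g$ is hyperbolic with translation length close to $t_2 - t_1$, and a standard shadowing argument (comparing the fixed points of $g$ on $\HPb$ with the endpoints of $\tilde{\al}|_{[t_1,t_2]}$ extrapolated to infinity) shows that the axis $\tilde{\ga}$ of $g$ remains within $O(\eta)$ of $\tilde{\al}|_{[t_1,t_2]}$, away from bounded neighbourhoods of its two endpoints.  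Define $\ga$ to be the corresponding closed geodesic on $S_c$.

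At any intersection point $p$ of $\ga$ with $\al$, the geodesics $\tilde{\ga}$ and $\tilde{\al}$ cross inside the shadowing region, so their tangent vectors at $p$ make angle $O(\eta)$ by hyperbolic trigonometry; choosing $\eta$ small enough makes this smaller than $\epsilon$.  If $\al'$ is another geodesic arc disjoint from $\al$ and $\ga$ meets $\al'$ at a point $p$, then $p$ lies within $O(\eta)$ of $\al$, so $\al'$ comes within $O(\eta)$ of $\al$ at $p$ without ever crossing it.  Convexity of the distance function $d(\cdot,\al)$ along the geodesic $\al'$ then forces the tangent of $\al'$ at $p$ to be almost parallel to that of $\al$ at the nearest foot of perpendicular, with angular deviation $O(\sqrt{\eta})$; combined with the previous bound on the angle between $\ga$ and $\al$, this gives the required bound on the angle between $\ga$ and $\al'$.

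The main obstacle is making the shadowing estimate quantitative and uniform in $\al$: the axis $\tilde{\ga}$ must remain $O(\eta)$-close to $\tilde{\al}|_{[t_1,t_2]}$ over an interval whose length grows with $l_\al(m)$.  This is handled by propagating the closeness at $\tilde{\al}(t_1)$ forward using the hyperbolic contraction of orthogonal projection onto $\tilde{\ga}$, combined with the $g$-equivariance to extend the estimate backwards from $\tilde{\al}(t_2)$.  Once this shadowing is in hand, the dependence of $M$ on $\epsilon$ is obtained in two steps: first fix $\eta = \eta(\epsilon)$ so that the angle bounds of the previous paragraph stay below $\epsilon$, then take $M = M(\eta)$ from the pigeonhole argument above.
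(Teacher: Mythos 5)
Your overall strategy coincides with the paper's: both proofs locate a long sub-segment of $\al$ that nearly closes up in the unit tangent bundle (you by pigeonholing a finite cover of $T^1S_c$ by small balls, the paper by observing that the $\epsilon$-neighbourhood $V_\epsilon(\al)$ has area exceeding that of $S_c$ and must self-overlap), take the closed geodesic $\ga$ freely homotopic to the resulting loop, show that a lift of $\ga$ fellow-travels the chain of translates of that sub-segment, and read off the angle bound for $\al$ from this shadowing. Up to that point your argument matches the paper's and is sound (the precise separation $t_2-t_1\ge M/2$ is not what pigeonhole gives, but a separation bounded below by a constant depending only on $S_c$ and $\eta$ does follow and suffices).

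The step that does not hold up as written is the treatment of arcs $\al'$ disjoint from $\al$. You invoke convexity of $f(t)=d(\al'(t),\al)$ along $\al'$, together with $f(p)=O(\eta)$ and $f>0$, to conclude that the tangent of $\al'$ at $p$ is nearly parallel to $\al$. Convexity only supplies the supporting-line inequality $f(s)\ge f(t)+f'(t)(s-t)$, which is a lower bound and is perfectly consistent with $f'(p)=\pm1$ (i.e.\ $\al'$ perpendicular to $\al$ at $p$) while $f$ stays positive everywhere; smallness of $f$ at the single point $p$ bounds nothing. To control $|f'(p)|$ by convexity you would need $f$ to remain $O(\eta)$ on an interval of definite length on \emph{both} sides of $p$, which is essentially the conclusion you are trying to reach. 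The mechanism that actually works (and is what the paper's one-line assertion is implicitly relying on) is the angle-of-parallelism estimate: a geodesic passing within distance $\eta$ of a geodesic segment $L$ and meeting it at angle at least $\epsilon$ must cross $L$ within distance roughly $\eta/\sin\epsilon$; since $\al'$ cannot cross $\al$, and the strand of $\al$ within $\eta$ of $p$ (a translate of the recurrent segment) extends a length at least $\mathrm{diam}(S_c)$ to either side, the angle must be less than $\epsilon$. One should still rule out $\al'$ or that strand terminating on $\partial S_c$ before the forced crossing occurs; the paper passes over this silently as well, so I flag it rather than count it against you.
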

\begin{proof}
	Given $\epsilon>0$, there exists $N\in \N$ such that $N\mathrm{diam}(S_c)\epsilon>3\,\mathrm{area}(S_c)$. Take $M=N\mathrm{diam}(S_c)$. Consider an arc $\al$ of length $M$ and its $\epsilon$-neighbourhood $V_\epsilon (\al)$. The area of $V_\epsilon (\al)$ is at least $2M\epsilon$. So $V_\epsilon (\al)$ cannot be embedded inside the surface — it self-overlaps threefold. It follows that there exists a segment $\eta$ of the arc $\al$ such that its length is $N'\mathrm{diam}(S_c)$ for $1<N'<N$ and its endpoints lie at a distance $2\epsilon$ from each other, with the velocities at those points being parallel. Join the two endpoints by a geodesic segment to get a closed loop. Then choose the unique closed geodesic $\ga$ in its homotopy class. We claim that $\ga$ satisfies the condition of the lemma. Firstly, we show that 
	\begin{equation}\label{max}
		\max\limits_{p\in \ga \cap \al}\angle_p(\ga,\al)<\epsilon.
	\end{equation}
	Let $\wt\ga$ be a infinite geodesic lift of $\ga$. Let $\wt\eta$ be a lift of the arc segment $\eta$ and consider its $\rho(\ga)$-orbit. For every $i\in \Z$, the two endpoints of $\rho(\ga)^i\cdot\wt\eta$ are $\epsilon$-close to one endpoint of $\rho(\ga)^{i-1}\cdot\wt\eta$ and one endpoint of $\rho(\ga)^{i+1}\cdot\wt\eta$. Let $p_i:=\wt\ga\cap\rho(\ga^i)\cdot\wt\eta$. The entire geodesic $\wt\ga$ is contained in the union $V:=\bigcup_{i\in\Z}V_\epsilon(\rho(\ga^i)\cdot \wt\eta)$ of the $\epsilon$-neighbourhoods of the lifts of $\eta$. So the angle of intersection at $p_i$ satisfies $$\angle_{p_i}(\wt\ga, \rho(\ga^i)\cdot\wt\eta)<\epsilon.$$
	
	Now let $\al'$ be an arc disjoint from $\al$ that intersects $\ga$. Then the point of intersection, denoted by $p$, lies inside $V_\epsilon (\al)$. Then from eq.\ \eqref{max} we have that $\al'$ intersects $\ga$ at an angle less than $\epsilon$. 
\end{proof}
The following lemma is an analogue of Proposition 2.3 in \cite{dgk}.
\begin{lemma}\label{minangle}
	Let $\sh$ be a hyperbolic surface with decorated spikes endowed with metric $m\in \tei \sh$. For any choice of minimally intersecting geodesic representatives $\{\al\}$ whose finite endpoints lie outside the horoball decoration of the spikes, there exists $\theta_0 \in (0,\frac{\pi}{2}]$ such that all the arcs intersect the boundary of the surface at an angle greater or equal to $\theta_0$.
\end{lemma}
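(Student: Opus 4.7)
My approach is a compactness argument generalizing Proposition~2.3 of~\cite{dgk} to the decorated setting. Truncate $\sh$ by removing closed horoball neighbourhoods of the spikes, producing a compact subsurface $S'$; by hypothesis, every finite endpoint of every arc in $\{\al\}$ lies in the compact set $\partial\sh\cap S'$, a finite union of compact boundary intervals and closed boundary geodesics.

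Argue by contradiction. Suppose there is a sequence $\al_n\in\{\al\}$ meeting $\partial\sh$ at $p_n\in\partial\sh\cap S'$ with angles $\theta_n\to 0$. After passing to a subsequence, $p_n\to p_\infty$ and the inward unit tangent of $\al_n$ at $p_n$ converges to a vector tangent to $\partial\sh$ at $p_\infty$. Lifting to $\HP$, pick lifts $\tilde\al_n$ meeting a fixed lift $L$ of the boundary component through $p_\infty$ at points $\tilde p_n\to\tilde p_\infty\in L$ at angles $\theta_n\to 0$. Hyperbolic trigonometry shows that $\tilde\al_n$ stays within distance $O(\theta_n)$ of $L$ for an arc length of order $|\log\theta_n|$, which diverges as $n\to\infty$. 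Projecting back down, $\al_n$ hugs $\partial\sh$ inside an embedded collar neighbourhood for an unboundedly long distance before turning inward.

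The contradiction is obtained via bigon elimination. A long geodesic arc that hugs the boundary must either wind around it, if the boundary component is closed, or head toward a spike, if the component is an infinite boundary segment. In either case, in the universal cover, the long near-tangent segment of $\tilde\al_n$ must be crossed by some translate under the deck group of another arc $\tilde\al'$ from the family (or of $\tilde\al_n$ itself), producing in the surface an innermost bigon bounded by subarcs of $\al_n$ and $\al'$ together with a segment of $\partial\sh$. Eliminating this bigon by an isotopy supported in the collar, sliding the endpoint $p_n$ along $\partial\sh$, strictly reduces the intersection number of $\al_n$ with $\al'$, contradicting the minimally intersecting hypothesis. The main obstacle is making the bigon elimination effective when $p_\infty$ is close to a spike, where $\partial\sh$ is non-compact: the horoball hypothesis together with Lemma~\ref{inject} localize the argument to the compact subsurface $S'$, ensuring that the sliding isotopy stays within the surface and does not escape into a cusp.
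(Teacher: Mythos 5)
First, a remark on the target: the paper does not actually write out a proof of Lemma~\ref{minangle}; it only records the statement as ``an analogue of Proposition~2.3 in \cite{dgk}''. So your proposal can only be measured against the intended DGK-style argument, and your overall strategy — compactness of the truncated boundary, a small angle forcing a long segment that fellow-travels the boundary, and a contradiction with minimal position — is indeed the right one.

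The genuine gap is the assertion that ``the long near-tangent segment of $\tilde\al_n$ must be crossed by some translate under the deck group of another arc $\tilde\al'$ from the family (or of $\tilde\al_n$ itself)''. The parenthetical alternative is vacuous: the representatives are embedded (minimal self-intersection of an embedded class is zero), so $\tilde\al_n$ meets no deck translate of itself; in particular an embedded geodesic arc can spiral many times around a closed boundary geodesic at a tiny angle without any self-crossing, so winding alone yields no contradiction. For another arc $\al'$ you give no reason why a crossing is forced — and if $[\al']$ and $[\al_n]$ are disjoint classes their minimal representatives do not cross at all. To close the gap you must name the crossing partner and force the crossing. After extraction all $p_n$ lie on one boundary component $b$. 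If $b$ is a closed geodesic, fix once and for all an arc $\beta$ of the family with an endpoint $q\in b$; it leaves $b$ at a fixed angle $\phi_0>0$, whereas for large $n$ the hugging strand of $\tilde\al_n$ passes over a translate of $\tilde q$ at height tending to $0$ (at distance $s$ along $\tilde b$ the strand sits at height comparable to $\sin\theta_n\,e^{s}$ — so it is $O(\theta_n)$ only over bounded initial portions, not over the whole length $|\log\theta_n|$ as you wrote, but that is exactly what is needed). Hence $\beta$ must cross that strand near $q$: if the classes are disjoint this already violates minimal position, and otherwise the subarc of $\al_n$ from $p_n$, the subarc of $\beta$ from $q$, and the intervening segment of $b$ bound a half-bigon whose elimination (sliding $q$ along $b$) lowers the intersection count. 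If instead $b$ is an infinite crown edge, the hugging segment heads towards a spike and therefore leaves your compact subsurface $S'$ — so the ``localization to $S'$'' you invoke does not apply there; the forced crossing partner is then the permitted finite arc cutting off that spike (or a spike-to-edge arc at it), which any arc penetrating the once-spiked region must traverse, and the same half-bigon argument applies. Without identifying the partner and why the crossing is unavoidable, the bigon-elimination step has nothing to act on.
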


\section{Main theorem}\label{main}
The goal of this section is to prove our parametrisation theorem:
\begin{theorem} \label{maindec}
	Let $\sh=\decogs$ or $\decogsn$ be a hyperbolic surface with decorated spikes. Let $m\in \tei{\sh} $ be a decorated metric. Fix a choice of strip template $\{\st \}_{\al \in\mathcal K}$ with respect to $m$. Then, the infinitesimal strip map $\mathbb{P}f:\sac {\sh}\longrightarrow \ptan {\sh}$ is a homeomorphism on its image  $\mathbb{P}^+(\adm m)$.
\end{theorem}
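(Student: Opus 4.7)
The plan is to follow the Danciger--Gu\'eritaud--Kassel blueprint (\cite{dgk}, Section 4), adapted to the presence of decorated spikes, parabolic strips and horoball connections. By Theorems \ref{pacdeco} and \ref{pacdecon}, the source $\sac\sh$ is an open topological ball of dimension $\dim\tei\sh - 1$; by Theorem \ref{defh} together with the openness and convexity of $\adm m$ established above, the target $\mathbb{P}^+(\adm m) \subset \ptan\sh$ is also an open ball of the same dimension. Hence it suffices to verify four properties of $\mathbb{P}f$: (i) its image lies in $\mathbb{P}^+(\adm m)$; (ii) it is continuous; (iii) it is injective; and (iv) it is proper as a map into $\mathbb{P}^+(\adm m)$. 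Invariance of domain will then make the image open in the target, properness will make it closed, and connectedness of $\mathbb{P}^+(\adm m)$ will promote $\mathbb{P}f$ to a continuous bijection, upgraded to a homeomorphism by a second application of invariance of domain.

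Continuity in (ii) is immediate from the linear dependence of $f_\al$ on the simplicial coordinates $c_\al$ and the smooth dependence of the strip template data on $m$. For (i), fix $x = \sum c_i \al_i$ and let $\ga$ be a closed geodesic or horoball connection. Theorem \ref{lenderiv} writes $\mathrm{d}l_\ga(f(x))$ as a sum of strictly positive contributions $w_x(p) \sin \angle_p(\al_g, \supp x)$ over $p \in \ga \cap \supp x$, and the intersection is nonempty because $\sigma_x$ is filling in the sense of Definition \ref{filling}. To turn this into a uniform lower bound $\mathrm{d}l_\ga(f(x)) \geq \epsilon_0 \, l_\ga(m)$, I combine Lemma \ref{minangle} (angles bounded away from $0$), Lemma \ref{ineq} (quantitative control of summed widths) and a tile-counting argument showing that the number of times $\ga$ crosses $\supp x$ is proportional to $l_\ga(m)$. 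For injectivity in (iii), on a fixed top-dimensional simplex the map $f$ is linear and the tile-map machinery of Theorem \ref{infdef} combined with the positivity of Theorem \ref{lenderiv} prevents it from factoring through a proper quotient; when $\supp x \neq \supp y$ I pick an arc $\al \in \supp x \smallsetminus \supp y$ and, using Lemma \ref{maxangle}, a transverse test curve $\ga$ meeting $\supp y$ at arbitrarily small angles, so that comparing the length-derivative formulae at $x$ and at $y$ along $\ga$ produces a contradictory ratio.

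Properness (iv) is where the main obstacle lies. As $x_n \to x_\infty \in \partial \sac \sh$, some coefficient $c_{\al_i}$ vanishes in the limit and the surviving family of arcs ceases to be filling in the sense of Definition \ref{filling}; the defect produces a complementary region containing either a non-trivial loop or at least two decorated spikes, which in turn supports a closed curve or horoball connection $\ga_0$ disjoint from $\supp x_\infty$. Theorem \ref{lenderiv} then forces $\mathrm{d}l_{\ga_0}(f(x_\infty)) = 0$, so $f(x_\infty)$ violates admissibility along $\ga_0$ and $\mathbb{P}f(x_n)$ escapes every compact subset of $\mathbb{P}^+(\adm m)$. The genuinely new difficulty compared with the compact case of \cite{dgk} is the simultaneous presence of horoball connections of arbitrarily short or even negative length (when the decorating horoballs overlap) and of parabolic strips along infinite arcs whose widths decay exponentially towards the spikes they approach; aligning these competing exponential scales so that the uniform estimate in step (i) and the degeneration argument in step (iv) both remain valid is the technical heart of the proof.
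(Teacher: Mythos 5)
Your overall strategy differs from the paper's in a way that creates the main gap. The paper does \emph{not} prove global injectivity of $\mathbb{P}f$ directly: it shows that $\mathbb{P}f$ is a local homeomorphism (via the codimension $0$, $1$, $2$ analysis) and that it is proper, and then concludes that a proper local homeomorphism between two open balls of the same dimension is a covering map, hence a homeomorphism since the target is simply connected. Your step (iii) is exactly the point this architecture is designed to avoid. The map $f$ is only piecewise linear: on each top-dimensional simplex it is linear (and injective there by Theorem \ref{codim0deco}, which rests on the longitudinal-motion argument of Lemmas \ref{longi} and \ref{maxlongideco} --- this is the content your phrase ``prevents it from factoring through a proper quotient'' gestures at without supplying), but nothing in your sketch rules out that two points lying in the interiors of \emph{different} simplices have the same image. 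Your proposed argument --- pick $\al\in\supp x\smallsetminus\supp y$ and a test curve from Lemma \ref{maxangle} meeting $\supp y$ at small angles --- does not produce a contradiction: Theorem \ref{lenderiv} gives the same positivity for both $x$ and $y$ along any such curve, and there is no quantitative comparison that forces $f(x)\neq f(y)$. What is actually needed, and what the paper supplies, is the verification that the piecewise-linear pieces fit together injectively \emph{locally} across codimension $1$ and $2$ faces (the diagonal-exchange analysis deferred to \cite{ppstrip}); global injectivity then comes for free from the covering-space argument, not from a direct pairwise comparison.

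There is a second, smaller gap in your properness step. You only treat sequences $x_n$ converging to a point $x_\infty$ in the boundary of a fixed simplex, where some coefficient degenerates and the surviving arcs fail to fill. But $\ac\sh$ is an infinite complex and is not locally compact: a sequence can leave every compact subset of $\sac\sh$ while every individual coefficient stays bounded away from $0$, simply because the supports $\supp{x_n}$ involve geodesic arcs of unboundedly increasing length, with no convergent subsequence in $\ac\sh$ at all. This is Case 2 of the paper's Theorem \ref{properdeco}, handled by combining Lemma \ref{maxangle} (a long arc is shadowed by a closed geodesic crossing it, and everything disjoint from it, at angle less than $\epsilon$) with Lemma \ref{ineq} (the summed strip widths along that geodesic are at most $M'l_\ga(m)$), so that $\mathrm{d}l_{\ga(n)}(f(x_n))/l_{\ga(n)}(m)<\epsilon$ while $f(x_n)$ stays bounded away from $\vec 0$ via the boundary-length derivative. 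Your closing paragraph correctly identifies the interaction of parabolic strips and short horoball connections as delicate, but identifying the difficulty is not the same as resolving it; as written, the exhaustion of the ways a sequence can escape $\sac\sh$ is incomplete.
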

Firstly, we show that the map $\psm$ is a local homeomorphism and then we show that it is proper. This proves that the map is a covering map between two open balls of the same dimension, which implies that it is a homeomorphism.
\subsection{Codimension 0}
In this section, we show thatnthe map $\mathbb{P}f$ is a local homeomorphism around points $x\in \sac{S}$ such that $\codim{\sigma_x}=0$. For that, we first define longitudinal motions.

Let $\mathbf a, \mathbf b\in \Min$ be two linearly independent future-pointing light-like vectors whose projective images are denoted by $A,B$. Let $AB$ be the unique hyperbolic geodesic joining these two points. 
\begin{definition}
	Given a Killing vector field $X\in \lalg$, the \emph{longitudinal motion} $X_l$ imparted by $X$ to the geodesic $AB$ is defined as
	\begin{equation}
		X_l:= \bil{\mathbf {v}_X}{\frac{\mathbf a\mcp \mathbf b}{\norm{(\mathbf a\mcp \mathbf b)}}},
	\end{equation}
	where $\mathbf v_X$ is the vector in $\Min$ corresponding to $X$.
\end{definition}
The motion is called "longitudinal" because the vector $X_l$ is equal to the component of $X(p)$ along the direction of the line AB, for every point $p \in \HP$ lying on AB.

\begin{figure}
	\centering
	\frame{\includegraphics[width=10cm]{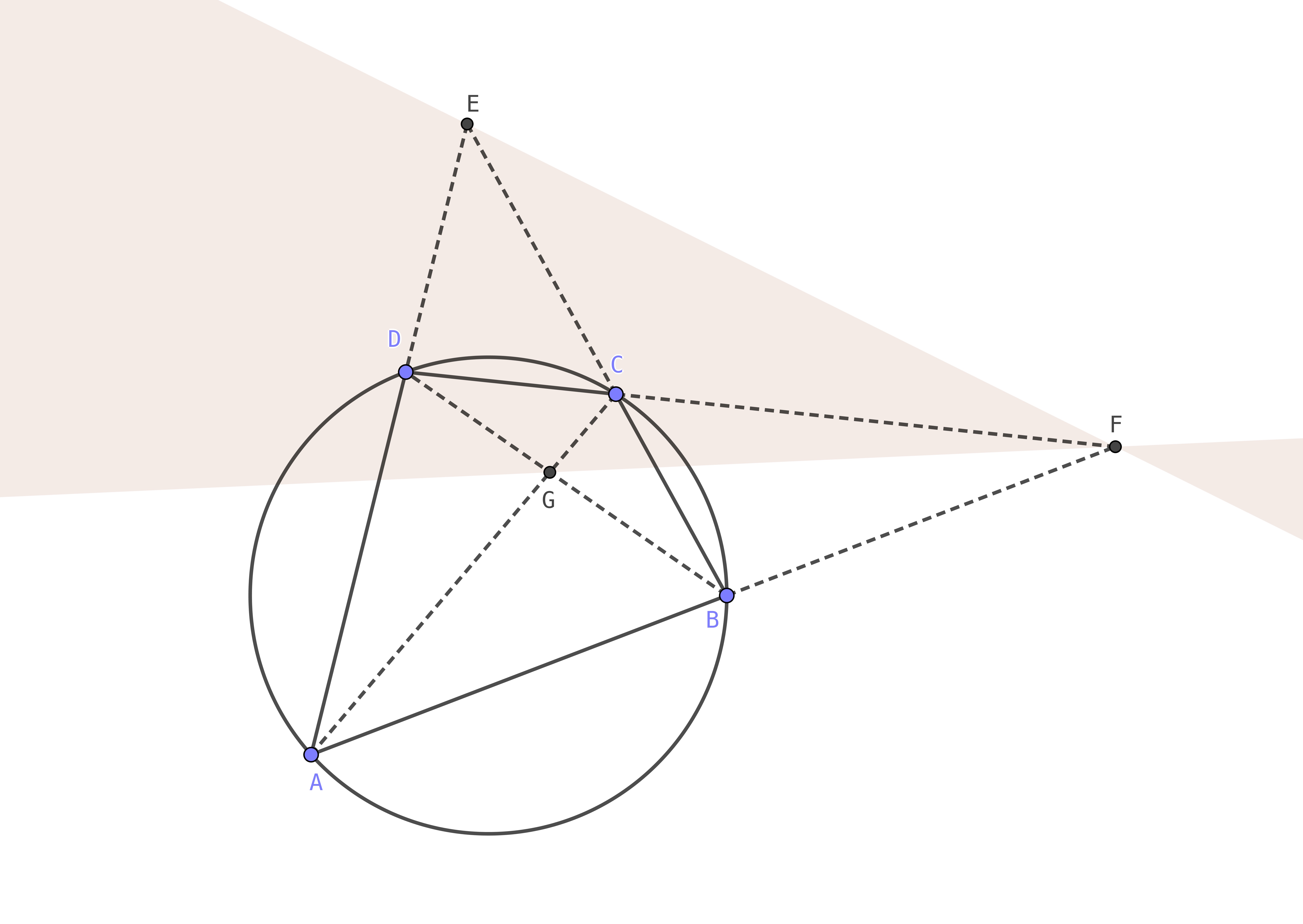}}
	\caption{The shaded region is the bigon of Lemma \ref{longi}}
	\label{longifig}
\end{figure}

\begin{lemma}\label{longi}
	Let $A,B,C,D$ be four ideal points ordered in anti-clockwise manner. Let $AB$ and $CD$ be two disjoint geodesics in $\HP$. Let $E,F,G$ be the intersection points $\oarr {AD} \cap \oarr {BC}$, $\oarr {AB}\cap\oarr {CD}$ and $\oarr{ AC}\cap\oarr {BD}$, respectively. Then the set of all Killing fields that impart at least the same amount (absolute value) of longitudinal motion to $AB$ as to $CD$, is given by the bigon bounded by $\oarr {GF}$ and $\oarr {EF}$, that contains the segment $CD$.
\end{lemma}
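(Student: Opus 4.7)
\emph{Proof plan.} Set $\mathbf p := (\mathbf a\mcp \mathbf b)/\norm{\mathbf a\mcp \mathbf b}$ and $\mathbf q := (\mathbf c\mcp \mathbf d)/\norm{\mathbf c\mcp \mathbf d}$, two unit space-like vectors. By the definition of longitudinal motion, for any Killing field $X$ with associated vector $\mathbf v_X\in \Min$ the longitudinal motions on $AB$ and $CD$ are $X_l^{AB}=\bil{\mathbf v_X}{\mathbf p}$ and $X_l^{CD}=\bil{\mathbf v_X}{\mathbf q}$, so the constraint $|X_l^{AB}|\ge |X_l^{CD}|$ is equivalent to the factorised inequality
\[
\bil{\mathbf v_X}{\mathbf p-\mathbf q}\cdot \bil{\mathbf v_X}{\mathbf p+\mathbf q}\;\ge\; 0.
\]
Projectivising, the desired set is bounded in $\pp\cong \mathbb{P}(\lalg)$ by the two distinct projective lines $L_\pm:=(\mathbf p\pm\mathbf q)^{\perp}$.

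The crux is to identify $\{L_+,L_-\}$ with $\{\oarr{FE},\oarr{FG}\}$. Both $L_\pm$ contain $F$: since $F\in L_{AB}\cap L_{CD}$, any lift $\mathbf f$ of $F$ satisfies $\bil{\mathbf f}{\mathbf p}=\bil{\mathbf f}{\mathbf q}=0$, whence $\bil{\mathbf f}{\mathbf p\pm\mathbf q}=0$. I would parametrise the pencil of projective lines through $F$ by the dual line $F^{\perp}\subset \pp$ via $\ell\mapsto \ell^{\perp}$; under this correspondence, the four lines $L_{AB},L_{CD},L_-,L_+$ become the points $[\mathbf p],[\mathbf q],[\mathbf p-\mathbf q],[\mathbf p+\mathbf q]$ of $F^{\perp}$, whose cross-ratio is $-1$. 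Hence $L_-$ and $L_+$ are harmonic conjugate to the pair $(L_{AB},L_{CD})$ in this pencil. The classical harmonic property of the diagonal triangle of the complete quadrangle $ABCD$ then concludes: the two harmonic conjugate lines through the diagonal point $F$ with respect to the opposite sides $L_{AB}$, $L_{CD}$ are exactly the other two sides $\oarr{FE}$ and $\oarr{FG}$ of the diagonal triangle. This forces $\{L_+,L_-\}=\{\oarr{FE},\oarr{FG}\}$ and presents the region of interest as one of the two bigons bounded by these lines.

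It then remains to single out the correct bigon. For any interior point $P$ of the geodesic segment $CD$, the infinitesimal rotation $X_P$ about $P$ vanishes at $P\in CD$; since the longitudinal component of a Killing field is constant along a geodesic, this gives $(X_P)_l^{CD}=0$. On the other hand, $P\notin AB$ because $AB$ and $CD$ are disjoint, so at the foot of the perpendicular from $P$ to $AB$ the vector $X_P$ is nonzero and tangent to $AB$, whence $(X_P)_l^{AB}\neq 0$. Thus $|(X_P)_l^{AB}|>|(X_P)_l^{CD}|=0$, so $P$ lies in the region $|X_l^{AB}|\ge|X_l^{CD}|$; as $F\notin CD$, the whole segment $CD$ lies in the same bigon, which is the one claimed. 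The principal obstacle is the harmonic-conjugate identification: the algebraic cross-ratio computation must be translated into the geometric statement via the pencil/polar duality above, after which the classical complete-quadrangle theorem closes the argument.
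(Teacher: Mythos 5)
Your overall strategy is genuinely different from the paper's (which normalises $A,B,C,D$ to a rectangle by an isometry and computes $E,F,G$ and the longitudinal motions explicitly), and most of it is sound: the factorisation $|X_l^{AB}|\ge|X_l^{CD}|\iff\bil{\mathbf v_X}{\mathbf p-\mathbf q}\,\bil{\mathbf v_X}{\mathbf p+\mathbf q}\ge 0$ is correct, the region is indeed one of the two closed bigons bounded by $L_\pm=(\mathbf p\pm\mathbf q)^{\perp}$, both of which pass through $F$, and your final step (an infinitesimal rotation about an interior point $P$ of the segment $CD$ kills the longitudinal motion on $CD$ but not on $AB$, so $CD$ lies in the correct bigon) is clean and complete.

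However, the central identification $\{L_+,L_-\}=\{\oarr{FE},\oarr{FG}\}$ does not follow from harmonicity alone, and this is a genuine gap. Given the two lines $L_{AB},L_{CD}$ of the pencil at $F$, the pairs harmonic with respect to them form a one-parameter family: in your affine coordinate on $F^{\perp}$ with $[\mathbf p]=0$ and $[\mathbf q]=\infty$, every pair $\{t,-t\}$ is harmonic, and your pair $\{[\mathbf p+\mathbf q],[\mathbf p-\mathbf q]\}=\{1,-1\}$ is singled out only by the normalisation $\norm{\mathbf p}=\norm{\mathbf q}=1$. The classical quadrangle theorem tells you that $\{\oarr{FE},\oarr{FG}\}$ is \emph{a} harmonic pair, not \emph{the} harmonic pair, so "this forces $\{L_+,L_-\}=\{\oarr{FE},\oarr{FG}\}$" is unjustified as written. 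The missing ingredient is a second incidence condition that uses the conic $\partial\HP$: since $\norm{\mathbf p}=\norm{\mathbf q}$, one has $\bil{\mathbf p+\mathbf q}{\mathbf p-\mathbf q}=0$, i.e.\ the poles of $L_+$ and $L_-$ are conjugate with respect to the light cone; and among the harmonic pairs $\{t,-t\}$ the conjugacy condition $1-t^2=0$ holds only for $t=\pm1$, so harmonicity plus conjugacy \emph{does} determine the pair uniquely. To close the argument you must therefore also invoke (or prove) that the diagonal triangle $EFG$ of a quadrangle inscribed in a conic is self-polar with respect to that conic, so that $\oarr{FE}=G^{\perp}$ and $\oarr{FG}=E^{\perp}$ are likewise a conjugate pair of lines through $F$. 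With that added, your proof is correct; without it, the key step is unproved (and in the paper it is precisely this identification that is carried out by the explicit rectangle computation yielding $E=[(0,1,0)]$, $G=[(0,0,1)]$, $\mathbf p\pm\mathbf q\propto(0,0,1),(0,1,0)$).
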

\begin{proof}
	Firstly, we prove that the Killing vector fields who projective images are one of $E,F,G$, impart equal longitudinal motion to both $AB$ and $CD$. 
	Let $\mathbf a,\mathbf b,\mathbf c,\mathbf d$ be future-pointing light-like vectors in the preimages of $A,B,C,D$ such that
	\[
	\begin{array}{ll}
		\mathbf	a=(-\cos{\theta},-\sin{\theta}),1), &\mathbf b=(\cos\theta,-\sin \theta,1)\\
		\mathbf	c=(\cos{\theta},\sin{\theta},1), &\mathbf d=(-\cos{\theta},\sin{\theta},1),
	\end{array} 
	\] where $\theta\in [0, \frac{\pi}{2}]$. The existence of $theta$ can be assumed up to applying a hyperbolic isometry to the quadruple $(A,B,C,D)$ because any cross-ratio is realized by some rectangle.
	Then the points $A,B,C,D$ form a rectangle in the projective plane. Also, we have that
	\begin{eqnarray}\label{e}
		E=\bra{(\mathbf a\mcp\mathbf d)\mcp (\mathbf c\mcp\mathbf  b)},& F=\bra{(\mathbf a\mcp \mathbf b)\mcp (\mathbf c\mcp \mathbf d)} ,
	\end{eqnarray}
	\begin{eqnarray}\label{g}
		G=\bra{(\mathbf a\mcp\mathbf c)\mcp (\mathbf b\mcp\mathbf d)} .
	\end{eqnarray}
	It follows directly from the definition of cross product that any Killing vector field that is a preimage of $F$ imparts no longitudinal motion either to $AB$ or to $CD$. 
	
	By inserting the coordinates of $\mathbf a,\mathbf b,\mathbf c,\mathbf d$ in the formulae \eqref{e},\eqref{g}, we get that 
	\[
	\begin{array}{ll}
		\mathbf a\mcp \mathbf c=2(-\sin \theta, \cos\theta, 0),&\mathbf b\mcp \mathbf d=2(-\sin \theta, -\cos\theta, 0),\\
		G=[(0,0,-4\sin(2\theta))].
	\end{array}
	\]
	Furthermore, we have that 
	\[
	\begin{array}{c}
		\mathbf a\mcp \mathbf b=(0,2\cos\theta,-\sin(2\theta)),\\
		\mathbf c\mcp\mathbf d=(0,-2\cos\theta,-\sin(2\theta)),\\
		{\norm{\mathbf a\mcp\mathbf  b}}^2={\norm{\mathbf c\mcp \mathbf d}}^2=(1+\cos(2\theta))^2.
	\end{array}
	\]
	If we take any Killing vector field $X_G=k((\mathbf a\mcp \mathbf c)\mcp (\mathbf b\mcp \mathbf d))$ in the preimage of $G$, $k\in\R^*$, we get that \[\bil{X_G}{\frac{\mathbf a\mcp \mathbf b}{\norm {\mathbf a\mcp\mathbf  b}}}=\bil{X_G}{\frac{\mathbf c\mcp\mathbf d}{\norm {\mathbf c\mcp\mathbf  d}}}=\frac{-4\sin^2(2\theta)}{(1+\cos(2\theta))}=-8\sin^2\theta.\]
	Finally, we calculate the coordinates of $E$:
	\[
	\begin{array}{rl}
		\mathbf a\mcp \mathbf d=r^2(-2\sin\theta, 0, \sin(2\theta)),&\mathbf c\mcp\mathbf  b=r^2(2\sin\theta, 0, \sin(2\theta))\\
		E=[4r^4(0,\sin\theta\sin(2\theta),0)].
	\end{array}
	\]
	
	If $X_E=k'(a\mcp d)\mcp (c\mcp b)$ for some $k'\in \R$, then we have \[\bil{X_E}{\frac{\mathbf a\mcp \mathbf b}{\norm {\mathbf a\mcp\mathbf  b}}}=-\bil{X_E}{\frac{\mathbf c\mcp\mathbf d}{\norm {\mathbf c\mcp\mathbf  d}}}=4\cos^2\theta.\]
	
	By linearity, we get that the Killing vector fields whose projections lie on the straight line $\oarr{EF}$ and $\oarr{GF}$ impart equal or opposite longitudinal motions to $AB$ and $CD$. Now, any Killing field, whose projective image lies on the straight line $c\mcp d$, imparts zero motion to $CD$ and non-zero motion to $AB$. Since the longitudinal motion on a given line is a linear function of the Killing field, the bigon containing the segment $CD$ is the desired one.
\end{proof}
\begin{theorem}\label{codim0deco}
	Given a triangulation $\sigma$ of a hyperbolic surface with decorated spikes $\sh=\decogs$ or $\decogsn$ with corresponding edge set $\ed$, the set of infinitesimal strip deformations $B=\{   \isd| e\in \ed  \}$ forms a basis of $\tang \sh$.
\end{theorem}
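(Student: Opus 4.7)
The plan is to pair the proposed basis $B$ with the differentials of the length functions $l_e$ of the arcs in $\ed$ themselves, and verify that the resulting pairing matrix is diagonal with strictly positive entries. Since a diagonal matrix with nonzero entries is invertible, this will force the vectors $f_e(m)$ to be linearly independent. Combined with the cardinality count $|\ed| = \dim \tang \sh$---which follows from Theorem \ref{defh} together with Theorems \ref{pacdeco}--\ref{pacdecon}, noting that a top-dimensional simplex of $\sac \sh$ has exactly $\dim \tei \sh$ vertices---this will give that $B$ is a basis.

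To set this up, for every arc $e \in \ed$ the decorated metric yields a smooth length function $l_e : \tei \sh \to \R$: either the geodesic length of a finite edge-to-edge arc, or, for an infinite spike-to-edge arc, the geodesic length from its boundary endpoint to the horocycle decorating the spike. I would then compute $M_{ij} := \mathrm{d}l_{e_i}(f_{e_j}(m))$. For $i \neq j$, the arcs $e_i$ and $e_j$ are disjoint edges of the same triangulation, so the intersection $e_i \cap \supp(f_{e_j}(m))$ is empty; the strip deformation along $e_j$ is local to a neighbourhood of $e_j$ and preserves the horoball decoration, so it alters neither the geodesic representative of $e_i$ nor the horocycle at its spike endpoint. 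Hence $M_{ij} = 0$, which is consistent with the formula of Theorem \ref{lenderiv}. For $i = j$, the inserted strip has waist transverse to $e_i$ at the template point $\pal$ with infinitesimal width $\wal$, and $e_i$ crosses it exactly once, giving $M_{ii} = \wal > 0$. Thus $M$ is diagonal with strictly positive diagonal, hence invertible, and linear independence of $B$ follows at once.

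The main subtlety I anticipate is justifying the off-diagonal vanishing when $e_i$ is an infinite spike-to-edge arc whose spike sits in a tile adjacent to $e_j$: although such a spike-containing tile is moved as a rigid body relative to its neighbour across $e_j$ under $f_{e_j}(m)$, the intrinsic geometry of each tile together with its horoball data is preserved by the strip deformation, so the length from the boundary endpoint of $e_i$ to the horocycle is genuinely unchanged. A careful case analysis on the tile structure from Section \ref{tilestypes}, combined with the fact that strip deformations preserve horoball data by definition, should suffice. Once this locality is nailed down, the remainder of the argument reduces to the elementary linear algebra of a diagonal matrix with positive entries.
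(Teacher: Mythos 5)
Your dimension count is the easy half and is correct, but the independence argument does not go through: the pairing matrix $M_{ij}=\mathrm{d}l_{e_i}(f_{e_j}(m))$ is neither diagonal nor does it have the diagonal you claim. On the diagonal, the strip realising $f_{e_i}(m)$ is glued \emph{along} $e_i$, so $e_i$ runs parallel to the inserted strip rather than across it; there is no transverse crossing, and in the crossing formula of Theorem \ref{lenderiv} the relevant angle would be $0$. The first variation of $l_{e_i}$ under $f_{e_i}(m)$ therefore comes entirely from the motion of the two boundary geodesics (or the boundary geodesic and the horocycle) carrying the endpoints of $e_i$, each of which is broken by the cut into pieces moved by different Killing fields. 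A direct check in the upper half-plane (take a lift $\wt{e_i}$ on the imaginary axis between the semicircles of radii $1$ and $e^{L}$, and move one half-plane by the infinitesimal translation whose axis is the semicircle of radius $e^{s}$, $0<s<L$) shows the common perpendicular between the two new boundary geodesics actually gets \emph{shorter}, so the claimed value $M_{ii}=w>0$ is wrong in both mechanism and sign. Off the diagonal the situation is worse: $l_{e_i}$ is measured to the geodesic boundary, and $f_{e_j}(m)$ moves that reference object. Whenever some lift of $e_j$ separates an endpoint of $\wt{e_i}$ from one of the ideal endpoints of the boundary geodesic containing it --- which is unavoidable, since several arcs of a triangulation end on each boundary component --- that ideal endpoint is displaced by a Killing field different from the one moving $\wt{e_i}$, the boundary geodesic genuinely changes to first order, and $\mathrm{d}l_{e_i}(f_{e_j}(m))\neq 0$ in general. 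Theorem \ref{lenderiv} cannot be invoked to kill these terms: it is stated only for closed curves and horoball connections, precisely the length functions with no boundary contributions.

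This is why the paper does not pair with arc lengths at all. Its proof of Theorem \ref{codim0deco} assumes a vanishing combination $\sum_{e}c_{e}f_{e}(m)=0$, takes the associated neutral tile map $\phi$, and studies the \emph{longitudinal motion} imparted to each edge by the Killing fields $\phi(d)$; Lemma \ref{longi} together with Lemma \ref{maxlongideco} (which places $[\phi(d)]$ inside the projective triangle based at the edge) shows that an edge of maximal longitudinal motion yields a contradiction unless all longitudinal motions vanish, whence $\phi\equiv 0$ and all $c_{e}=0$. If you want a dual-pairing proof you would need functionals built from closed curves and horoball connections, and those never meet a triangulation in a diagonal intersection pattern.
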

\begin{proof}
	We start with a neutral tile map $\phi_0:\ltile\longrightarrow \lalg$ for the triangulation $\sigma$, representing the linear combination \[\sum_{e\in \ed}c_e \isd=0,\] and we show that the maximal longitudinal motion along any arc of the triangulation is zero. 
	
	Let $e$ be a common internal edge of two tiles $d,d'\in \ltile$. From the definition of tile maps we know that when $e$ is spike-to-edge, the difference $\np d-\np{d'}$ is a light-like point in the plane $L_e$, and when $e$ is an edge-to edge arc, the difference is a space-like point in $L_e$. We claim that the longitudinal motions imparted to $e$ by $\np d$ and $\np {d'}$ are equal.
	
	Indeed,
	we can decompose the Minkowski space as $$\Min=L_e\oplus \du{L_e},$$ where $L_e$ is the plane $\mathbb{P}^{-1}(\oarr e)$ and $L_e^\perp$ is the $\bil{\cdot}{\cdot}$-dual of $L_e$. Then $\np d=\mathbf v_t+\mathbf v_l$ and $\np {d'}=\mathbf {v}_t'+\mathbf {v}_l'$ with $\mathbf v_t,\mathbf {v}_t' \in L_e$ and $\mathbf v_l,\mathbf {v}_l'\in \du{L_e}$.
	Now from the definition of tile maps we have that the vector $\phi(d)-\phi(d')$ is a space-like point of $L_e$.  Hence, $\mathbf v_l=\mathbf v_l'$, proving our claim.

	Moreover, when $e$ is spike-to-edge, the Killing fields $\np d,\np{d'}$ are parabolic preserving the spike as well as the horoball decoration. So the longitudinal motion along $e$ is zero in this case. It remains to show that the maximal longitudinal motion along any edge-to-edge arc is zero. 
	\begin{figure}
		\centering
		\frame{\includegraphics[width=10cm]{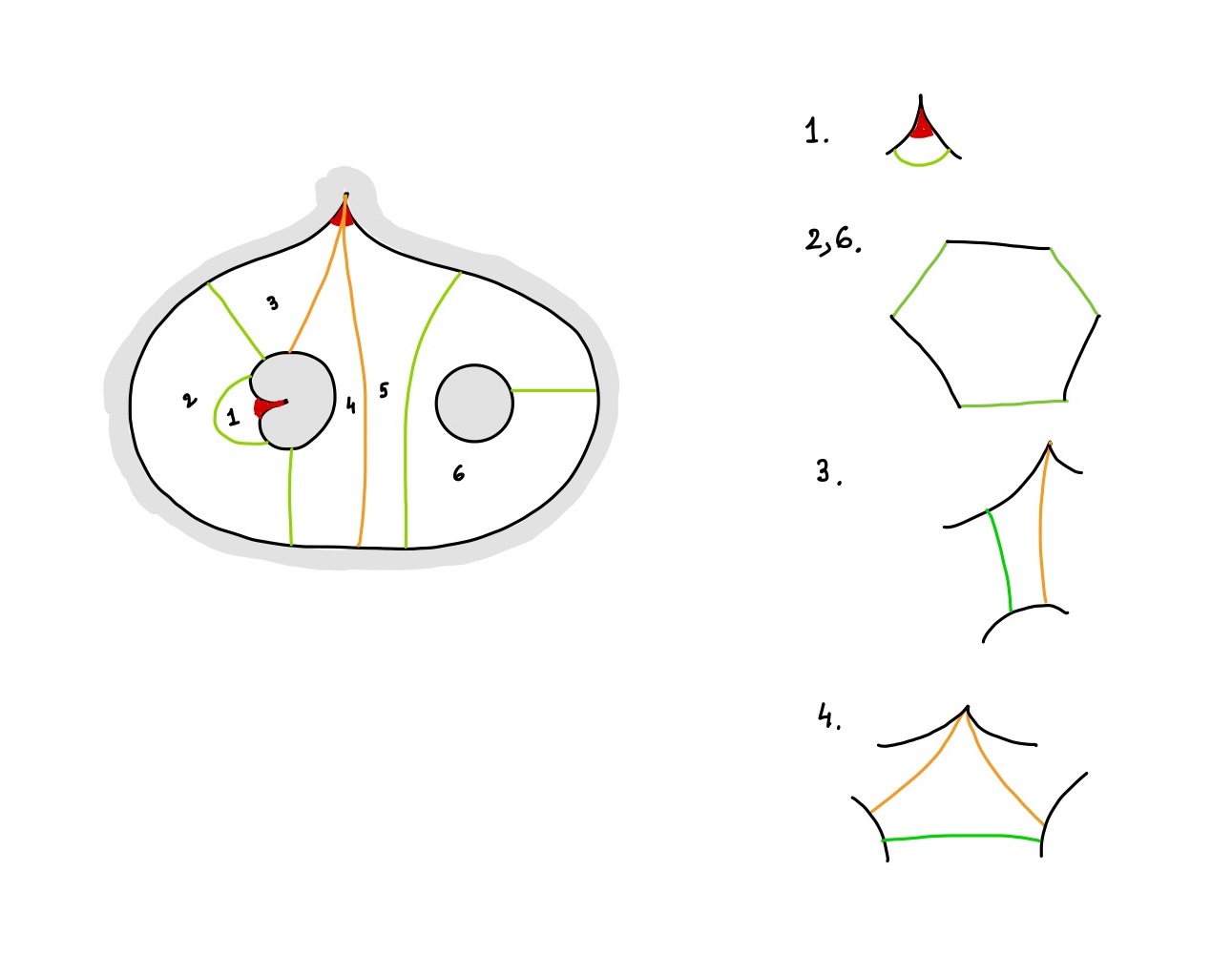}}
		\caption{Tiles of a triangulation of $S_{0,3}^{(1,1,0)}$}
		\label{hptiles}
	\end{figure}
	We use the following theorem:
	\begin{lemma}\label{maxlongideco}
		Suppose that $e$ is an edge-to-edge arc with maximal longitudinal motion. Let $d\in \ltile$ be a tile with $e$ as an internal edge. Then, the point $[\np d]$ is contained in the interior of the projective triangle based at $e$, containing $d$.
	\end{lemma}
	\begin{proof}
		Figure \ref{hptiles} shows the different tiles formed after the triangulation of the surface.
		\begin{itemize}
			\item Suppose that $d$ is of type one, \ie it is a triangle with one decorated ideal vertex and one internal edge which is edge-to-edge. (Topmost tile in Fig.\ \ref{hptiles}). The point $[\np d]$ is given by the ideal vertex, which lies inside the desired triangle.
			\item Suppose that $d$ is of type two, \ie it is a quadrilateral with one decorated ideal vertex and two internal edges, one of which is edge-to-edge (Third tile from the top in Fig.\ \ref{hptiles}). Once again, the point $[\np d]$ is given by the ideal vertex, which lies inside the desired triangle.
			\item Suppose that $d$ is of type three. Firstly, we suppose that it is a pentagon with one decorated ideal vertex and three internal edges, one of which is edge-to-edge (Fourth tile in Fig.\ \ref{hptiles}). Once again, the point $[\np d]$ is given by the ideal vertex, which lies inside the desired triangle. Finally, if $d$ is a hexagon with three edge-to-edge arcs (second tile from the top in Fig.\ \ref{hptiles}), the proof is identical to the proof of Claim 3.2(0) in \cite{dgk}.
		\end{itemize}
		This finishes the proof of Lemma \ref{maxlongideco}.
	\end{proof}
	Now let $e$ be an internal edge of two neighbouring tiles $d,d'\in \ltile$, such that $e$ has maximal (non-zero) longitudinal motion. So $e$ is an edge-to-edge arc. By Lemma \ref{maxlongideco}, the points $[\np d]$ and $[\np {d'}]$ belong to two projective triangles whose interiors are disjoint. If $c_e\neq 0$, then $[\np d- \np{d'}]$ must be a point in $\oarr e\backslash \cHP$. But any line joining $[\np d]$ and $[\np {d'}]$ intersects $\oarr e$ inside $\HP$. So we arrive at a contradiction. Hence, the longitudinal motion along every edge-to-edge arc is zero.
	
	Now we prove that $\np d=0$ for every $d\in \ltile$. Every tile $d$ of the triangulation has an internal edge-to-edge arc. Suppose that $d$ has a decorated ideal vertex $p$. Then, either $\np d=0$ or $\np d\in \pinv p$. Let $e$ be an internal edge-to-edge arc, with endpoints $A,B\in \HPb$. Let $\mathbf a\in \pinv A$ and $\mathbf b\in \pinv B$ be future pointing light-like vectors. Since the longitudinal motion along $e$ is zero we have that $$\bil{\np d}{\frac{\mathbf a \mcp \mathbf b}{\norm {\mathbf a \mcp \mathbf b}}}=0,$$ which is possible only if $\np d=0$.   
	Finally suppose that $d$ is a hexagon. Then it has three internal edges, denoted by  $e_1,e_2,e_3$. Choose space-like vectors $\mathbf v_i\in \pinv {\du {e_i}}$ for $i=1,2,3$. Then from above we know that the longitudinal motions along its three internal (pairwise disjoint in $\HP$) edges are zero. So $\np d=0$.
	
\end{proof}
\subsection{Codimension 1,2}
The proofs for the local homeomorphism of $\mathbb P f$ around points belonging to the interiors of simplices of codimension more than 0 are identical to that in the cases of decorated (once-punctured) polygons, Sections 5.2-5.3  in \cite{ppstrip}.

\subsection{Properness}

In this section we prove that the projectivised strip map $\mathbb{P}f$ is proper.
\begin{theorem}\label{properdeco}
	Let $\sh$ be a hyperbolic surface with decorated spikes. Let $m\in \tei \sh$. Then
	the projectivised strip map $\mathbb{P}f:\sac \sh \longrightarrow \mathbb{P}^+(\adm m)$ is proper.
\end{theorem}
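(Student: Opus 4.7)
The plan is to argue by contradiction. Suppose there exists a compact $K\subset \mathbb{P}^+(\adm m)$ and a sequence $(x_n)\subset \mathbb{P}f^{-1}(K)$ with no convergent subsequence in $\sac\sh$. Compactness of $K$ provides $\epsilon>0$ such that every $[v]\in K$ admits a unit representative $v$ with $\inf_{\gamma}\frac{dl_\gamma(m)(v)}{l_\gamma(m)}\geq\epsilon$, the infimum running over all horoball connections and closed geodesics. After extraction, the normalised vectors $f(x_n)/\|f(x_n)\|$ converge on the unit sphere to a direction $v_\infty$ satisfying the same admissibility bound.

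The first step is combinatorial: I would show that, up to a further extraction, the supports $\supp x_n$ are contained in a common finite family $\{\al_1,\dots,\al_p\}$ of arcs. The guiding idea is that an arc $\al$ of large intersection number $N$ with a fixed auxiliary triangulation of $\sh$ produces an infinitesimal strip $f_\al(m)$ whose normalised direction approaches a pure Dehn-twist-like shear, so after normalisation the length derivatives $dl_\gamma(f_\al(m))/\|f_\al(m)\|$ decay to zero for suitable short closed curves $\gamma$ (Lemma \ref{ineq} bounds these derivatives by $Ml_\gamma(m)$ while $\|f_\al(m)\|$ grows with $N$). Such directions eventually leave the compact set $K$, so sequences with unbounded arc complexity are excluded.

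Once the supports are confined to $\{\al_1,\dots,\al_p\}$, write $x_n=\sum_{i=1}^p c_i^n\al_i$ with $\sum_i c_i^n=1$ and, after passing to a subsequence, $c_i^n\to c_i^\infty$. Set $I=\{i:c_i^\infty>0\}$. If the simplex $\tau$ spanned by $\{\al_i:i\in I\}$ were filling, the limit $x_\infty$ would belong to $\sac\sh$, contradicting our assumption; hence $\tau$ is not filling. By definition of filling (Definition \ref{filling}), some connected component $R$ of $\sh\setminus\bigcup_{i\in I}\al_i$ is either not a topological disk or is a disk containing at least two decorated spikes. In the first case $R$ supports an essential non-peripheral $m$-closed geodesic $\gamma$; in the second case $R$ supports a horoball connection $\gamma$ between two distinct decorated spikes of $R$. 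In both cases $\gamma$ can be chosen disjoint from every $\al_i$ with $i\in I$, and Theorem \ref{lenderiv} applied to $\gamma$ gives
\begin{equation*}
dl_\gamma(f(x_n))=\sum_{j\notin I}c_j^n\sum_{p\in\gamma\cap\al_j} w_{\al_j}(p)\sin\angle_p(\al_j,\gamma),
\end{equation*}
since arcs with $i\in I$ do not meet $\gamma$. Each inner sum is bounded by $Ml_\gamma(m)$ by Lemma \ref{ineq} and the outer coefficients $c_j^n$ tend to $0$, so $dl_\gamma(f(x_n))\to 0$. Admissibility of $v_\infty$ gives $dl_\gamma(f(x_n))/\|f(x_n)\|\to dl_\gamma(v_\infty)\geq \epsilon l_\gamma(m)>0$, which forces $\|f(x_n)\|\to 0$. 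By continuity of $f$ on the fixed simplex we have $f(x_n)\to \sum_{i\in I}c_i^\infty f_{\al_i}(m)$, so this vector must vanish. Every simplex of the arc complex extends to a triangulation (pairwise disjoint arcs can always be completed into a maximal such family), and by Theorem \ref{codim0deco} the strip deformations associated to a triangulation form a basis of $\tang\sh$; the $f_{\al_i}(m)$ with $i\in I$ are therefore linearly independent, forcing $c_i^\infty=0$ for all $i\in I$, contradicting $\sum_i c_i^\infty=1$.

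The main obstacle is the first, combinatorial step. Once sequences using arcs of unbounded complexity are ruled out, the remainder of the argument is a direct witness-curve argument modelled on the compact case of \cite{dgk}; the real effort lies in quantifying how $\|f_\al(m)\|$ and the normalised length derivatives behave as $\al$ wraps around a fixed triangulation many times, so that large complexity of $\al$ is seen to push the corresponding direction out of any compact subset of $\mathbb{P}^+(\adm m)$.
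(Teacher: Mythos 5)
Your overall framing (contrapositive of properness, splitting according to whether the supports of $x_n$ stay in a finite family of arcs) matches the paper's strategy, and your treatment of the bounded-complexity case is essentially correct: the witness curve or horoball connection $\gamma$ in a non-filling complementary region, the vanishing of $\mathrm{d}l_\gamma(f(x_n))$, and the conclusion via linear independence of $\{f_{\al_i}(m)\}_{i\in I}$ (extending the simplex to a triangulation and invoking Theorem \ref{codim0deco}) is a clean, valid variant of the paper's ending, which instead exhibits a boundary component whose length derivative stays positive to show $f(y)\neq\vec 0$.

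The genuine gap is the first, combinatorial step, which you yourself flag as ``the main obstacle'' and do not carry out. The reduction to a common finite family $\{\al_1,\dots,\al_p\}$ is exactly the hard half of the theorem, and the mechanism you propose for it --- that $\|f_\al(m)\|$ grows with the intersection number $N$ while $\mathrm{d}l_\gamma(f_\al(m))$ stays bounded by $Ml_\gamma(m)$, so the normalised direction leaves $K$ --- is not substantiated and is not how the smallness actually arises. Under the normalisation used for strip widths, there is no a priori lower bound on the growth of $\|f_\al(m)\|$ in terms of $N$, and Lemma \ref{ineq} alone only gives an upper bound on $\sum_p w_\al(p)$, which says nothing about the ratio $\mathrm{d}l_\gamma(f(x_n))/\bigl(\|f(x_n)\|\,l_\gamma(m)\bigr)$ becoming small. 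The paper's argument instead makes the heuristic ``long arcs act like twists'' precise through Lemma \ref{maxangle}: a geodesic arc of length $>M$ must nearly close up, producing a closed geodesic $\gamma(n)$ that meets the arc \emph{and every arc disjoint from it} at angle less than $\epsilon/M'$. Feeding this into the length-derivative formula of Theorem \ref{lenderiv}, the factors $\sin\angle_p$ are uniformly small, so $\mathrm{d}l_{\gamma(n)}(f(x_n))\leq \epsilon\, l_{\gamma(n)}(m)$ by Lemma \ref{ineq}; separately, the normalisation of total strip width along $\partial\sh$ together with the angle lower bound of Lemma \ref{minangle} keeps $f(x_n)$ bounded away from $\vec 0$. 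It is the smallness of the angles, not the growth of the norm, that pushes $[f(x_n)]$ out of $K$. Without this (or an equivalent quantitative substitute), your proof is incomplete precisely where the theorem is hardest.
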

\begin{proof}
	Let $(x_n)_n$ be a sequence in the pruned arc complex $\sac \sh$ such that $x_n\rightarrow \infty$: for every compact $K$ in $\sac \sh$, there exists an integer $n_0\in \N$ such that for all $n\geq n_0$, $x_n\notin K$.  We want to show that $\mathbb{P}f(x_n) \to \infty$ in the projectivised admissible cone $\mathbb{P}^+\adm m$. Recall that the admissible cone $\adm m$ is an open convex subset of $\tang S$. Its boundary $\partial \adm m$ consists of $\vec{0}\in \tang S$ and is supported by hyperplanes (and their limits) given by the kernels of linear functionals $\mathrm{d}l_{\be}:\tang S \longrightarrow \R$, where $\be$ is a horoball connection or a non-trivial closed geodesic of the surface. It suffices to show that $f(x_n)$ tends to infinity (in the sense of leaving every compact subset) inside $\adm m$ but stays bounded away from $\vec{0}$ so that $\mathbb{P}f(x_n)$ tends to infinity in $\mathbb{P}^+\adm m$. 
	
	From Lemma \ref{ineq}, we get that there exists a constant $M'>0$ depending on the normalisation such that for every closed geodesic $\ga$ and every point $x\in \ac \sh$ the following inequality holds
	\begin{align}\label{ineqnorm}
		\sum_{p\in \ga\cap \supp {x}} w_{x}(p)\leq M' l_{\ga}(m),
	\end{align} where $w_x: \supp x\to \R_{>0}$ is the strip width function. Let $K(S_c)$ be a compact neighbourhood of the convex core $S_c$ of the surface. Then every arc has bounded length outside $K(S_c)$: there exists $C>0$ such that for every geodesic arc $\al$, $l_{\al\smallsetminus K(S_c)}<C$.
	Given $\epsilon>0$, we get a constant $M>0$ from Lemma \ref{maxangle} applied to $\frac{\epsilon}{M'}$. Define $$\mathcal{K}_M:=\{\al \in \mathcal{K}\mid l_{\al}(m)\leq M+C\}.$$ Since there exist only finitely many geodesic arcs in $\sh$ (and hence finitely many permitted arcs) up to any given length, we have that $\mathcal{K}_M$ is finite.  Consequently, the set $\Sigma_M$ of simplices in $\ac \sh$ spanned by the arcs in $\mathcal{K}_M$ is also finite. We will show that there exists $n_1\in \N$ such that for every $n\geq n_1$, there exists a closed geodesic $\ga(n)$ that satisfies:
	\begin{equation}\label{curvedeform}
		\frac{\mathrm{d}l_{\ga(n)}(f(x_{n}))}{l_{\ga(n)}(m)}<\epsilon.
	\end{equation} 
	It is enough to prove the above inequality for two types of subsequences of $(x_n)_n$— a subsequence whose terms live in one of the finitely many simplices in $\Sigma_M$ and a subsequence whose every term lies in simplices outside $\Sigma_M$. Finally, in both the cases we show that $f(x_n)$ does not converge to $\vec 0$.
	\begin{enumerate}
		\item[Case 1:] Consider a subsequence $(y_n)_n$ of $(x_n)_n$ such that $y_n\in \sigma$ spanned by the arcs $\{\al_1,\ldots,\al_{N}\}\subset \Sigma_M$, where $N\leq\dim \tei \sh$. Since $y_n\to\infty$, it has a subsequence that converges to a point $y\in \ac {\sh}\smallsetminus\sac\sh$. So $y_n$ is of the form: \[ y_n=\sum_{i=1}^{N}t_i(n)[\al_{i}], \text {with } t_i(n)\in (0,1] \text{ and }\sum_{i=1}^{N}t_i(n)=1, \] 
		and the limit point $y$ is then given by: 
		
		\[ y=\sum_{i=1}^N t_{i}^\infty[\al_{i}],\]  where there exists $\mathcal{I}\subsetneq \{1,\ldots,N\}$ such that \[\text{ for } i\in \mathcal I,\,t_{i}(n)\mapsto t_{i}^\infty\in (0,1],\text{ and }\sum_{i\in \mathcal{I}}t_{i}^\infty=1,\] 
		\[\text{ for } i\in \{1,\ldots,N\}\smallsetminus \mathcal{I},\, t_{i}(n)\to t_i^\infty=0.\] 
		Since $y\in \ac {\sh}\smallsetminus\sac\sh$, in the complement of $\supp y=\bigcup_{i\in\mathcal{I}}\al_i$, there is either a loop or a horoball connection, denoted by $\be$. By construction, $\be$ intersects only the arcs $\{\al_i\}_{i \notin \mathcal{I}}$. By continuity of the infinitesimal strip map $f$ on $\sigma$, the sequence $(f(y_n))_n$ converges to $f(y)\in \partial\adm m$ and $$\mathrm{d}l_{\be}(f(y))=\sum_{i \notin \mathcal{I}} t_i^\infty \mathrm{d}l_{\be}(f_{\al_i}(m))=0.$$ Hence $f(y)$ fails to lengthen $\be$. 
		
		Next we show that $f(y)\neq 0$. Let $\ga$ be the boundary component containing one endpoint of an arc $\al_i$ for $i\in \mathcal I$. Then we have
		\begin{align*}
			\mathrm{d}l_{\ga}(f(y))&=\sum\limits_{p\in \ga \cap \supp {y}} w_{y}(p) \sin \angle_p( \ga, \supp {y})\\
			&\geq t_i^\infty w_{\al_i}(p)\sin \angle_p( \ga, \supp {y})\\
			&>0.
		\end{align*}
		\item[Case 2:] Consider a subsequence $(z_n)_n$ such that for every $n\in \N$ there exists an arc $\al_n\subset \supp{z_n}$ with $l_{\al_n}(m)>M+C$. So $l_{\al\cap K(S_c)}>M$. 
		From Lemma \ref{maxangle}, there exists a geodesic, denoted by $\ga(n)$, which satisfies
		\begin{equation}\label{mangle}
			\theta_0:=\max\limits_{p\in \ga(n) \cap \supp {z_n}} \angle_p (\supp {z_n}, \ga(n))<\frac{\epsilon}{M'}.
		\end{equation}
		Thus we have
		\begin{align*}
			\mathrm{d}l_{\ga(n)}(f(z_n))&=\sum\limits_{p\in \ga(n) \cap \supp {z_n}} w_{z_n}(p) \sin \angle_p( \ga(n), \supp {z_n})\\
			&\leq \theta_0\sum\limits_{p\in \ga(n) \cap \supp {z_n}} w_{z_n}(p) \\
			&\leq l_{\ga (n)}(m)\epsilon.
		\end{align*}
		Hence we get that the closed geodesics $\{\ga(n)\}_n$ do not get uniformly lengthened by the strip map. Hence, $f(z_n)$ converges to a point in $\partial \adm m$. 
		
		Now we show that $f(z_n)\not\to \vec 0$. Let $\lambda:=\lim\limits_{n\to\infty} \supp{z_n}$ be the limit in Hausdorff topology. The normalisation condition states that for every $n\in \N$, we have
		\begin{equation*}
			\sum_{p\in \partial \sh\cap\supp{x_n} }w_{z_n}(p)=1.
		\end{equation*}
		So for every $p\in \partial \sh\cap\supp{x_n} $, we have $w_{z_n}(p)\geq \frac{1}{2N}$. 
		Let $b$ be a boundary component of the surface such that for every $n\in \N$ it contains an endpoint $p(n)$ of an arc $\al_n$ in $z_n$. Then 
		\begin{align*}
			\mathrm{d}l_b(f(z_n))&\geq \frac{\sin \angle_{p(n)}(b,\supp{z_n})}{N}\geq \frac{\sin \theta_0}{N}>0.
		\end{align*}
		Thus we have that $f(z_n)$ is bounded away from $\vec 0$.
	\end{enumerate}
\end{proof}

\section{Parametrisation of Margulis spacetimes}\label{dmst}
In this section we first recall the parametrisation of Margulis spacetimes using the pruned arc complex and the construction of the fundamental domain of a Margulis spacetime from an admissible deformation of a compact hyperbolic surface with boundary, as done in \cite{dgk}. 

\subsection{Undecorated Margulis spacetimes}
\paragraph{Drumm's construction of proper cocycles.} 
Let $\rho:\Ga\longrightarrow G$ be a convex cocompact representation. A fundamental domain for the action of $\rho(\Ga)$ on the hyperbolic plane $\HP$ is bounded by finitely many pairwise disjoint geodesics. These geodesics are used to construct the stems of pairwise disjoint crooked planes in $\Min$. Then these planes are made disjoint from each other by adding points from their respective stem quadrants. The polyhedron bounded by these new crooked planes is a fundamental domain for the action of the group $\Gamma$ and  the resulting manifold $X/\Gamma$ is complete. Finally, Drumm determined $u$.

\paragraph{From proper cocycles to Margulis spacetimes, \cite{dgk}.}
Let $S_c$ be a compact hyperbolic surface with totally geodesic boundary. Let $\rho:\fg{S_c}\longrightarrow \pgl$ be a holonomy representation and $u:\fg{S_c}\longrightarrow \lalg$ be a $\rho$-cocycle such that $[u]\in \adm{[\rho]}$. From Theorem 1.7 in \cite{dgk}, we know that the projectivised strip map when restricted to the pruned arc complex of the surface $S_c$ is a homeomorphism onto its image $\adm {[\rho]}$.
So there exists a point $x\in\sac {S_c}$ and a unique simplex $\sigma$ such that $\mathbb{P}f(x)=[u]\in \mathbb{P}^+ \adm {[\rho]}$ and $x\in \inte\sigma$. So $x=\sum_i t_i [\al_i]$ with $\sum_i t_i =1$ and $f(x)=\sum_i t_i f_{\al_i}(m)$. Corresponding to this linear combination of strip maps, we get a class of tile maps $\phi:\ltile\longrightarrow \lalg$ that are $(\rho(\fg{S_c}),u)$-equivariant. Let $\al\in \ed$ be any arc of $\sigma$ and $\wt \al$ be any lift. There exists tiles $d_1,d_2\in\ltile$ that have $\wt \al$ as their common internal edge. Suppose that the geodesic arc $\wt\al$ is positively transversely oriented from $d_1$ to $d_2$. Then the Killing field $\phi(d_2)-\phi(d_1)$ is hyperbolic and represents the term $t_\al f_{\al}(m)$ in $f(x)$. Let $\mathbf v_{\wt\al}\in \du{\al}$ be a hyperbolic Killing field with attracting and repelling fixed points given by $[\mathbf {v}_{\wt\al}^+],[\mathbf v_{\wt\al}^-]$ such that the triplet $(\mathbf {v}_{\wt\al}^+, \mathbf {v}_{\wt\al}, \mathbf {v}_{\wt\al}^-)$ is positively oriented and the tile $d_2$ lies to the left of the axis when viewed from $[\mathbf v_{\wt\al}^-]$. Then the crooked plane associated to $\wt \al$ is given by $\mathcal{P}_{\wt\al}:=\mathcal P(\mathbf {w}_{\wt\al},\mathbf {v}_{\wt\al})$, where $\mathbf {w}_{\wt\al}:=\frac{\phi(d_1)+\phi(d_2)}{2}$. For other arcs in the orbit of $\wt\al$, the crooked plane is defined as: for every $\ga\in \fg{S_c}$, $\mathcal{P}_{\rho(\ga)\cdot\wt\al}= \rho(\ga)\cdot \mathcal{P}_{\wt\al} +u(\ga).$ 

Firstly, it is shown that for every two disjoint arcs $\alo,\alt\in \ed$ their associated crooked planes $\mathcal{P}_{\rho(\ga)\cdot\alo}, \mathcal{P}_{\rho(\ga)\cdot\alt}$ are disjoint by using Drumm's sufficient condition. Then they consider a fundamental domain of the surface bounded by finitely many arcs in $\led$ and show that the associated crooked planes form a fundamental domain for the Margulis spacetime. We shall adapt this method to our surfaces with decorated spikes.

\subsection{Decorating a Margulis spacetime}
\subsubsection{Photons and Killing fields}
Consider the projective disk model of $\HP$ and a point $p\in \HPb$. Recall that an open horoball $h$ based at $p$ is the projective image of the subset $H(\mathbf v)=\{\mathbf w\in \HP\mid \bil{\mathbf w}{\mathbf{v}}>-1 \}$ of the hyperboloid $\HP$, where $\mathbf v$ is a future-pointing light-like point in $\pinv p$. If $k>k'>0$, then the horoball $h:=\mathbb{P}H(k\mathbf v_0)$ is smaller than the horoball $h':=\mathbb{P}H(k'\mathbf v_0)$. 

\begin{definition}
	Let $\mathbf {v_0}\in \Min$ be a future-pointing light-like vector and let $\mathbf v\in\Min$ be any point. Then the affine line $\mathcal L(\mathbf v,\mathbf{v_0}):=\mathbf v+\R \mathbf {v_0}$ is called a \emph{photon}. 
\end{definition}
A vector $\mathbf u\in \mathcal L(\mathbf v,\mathbf{v_0})$ corresponds to a Killing field that moves the vector $\mathbf{v_0}$ in the direction $\mathbf u \mcp \mathbf{v_0}$. A vector $\mathbf w\in H$ is moved in the direction $\mathbf u\mcp \mathbf w$.
\begin{figure}
	\centering
	\includegraphics[height=5cm]{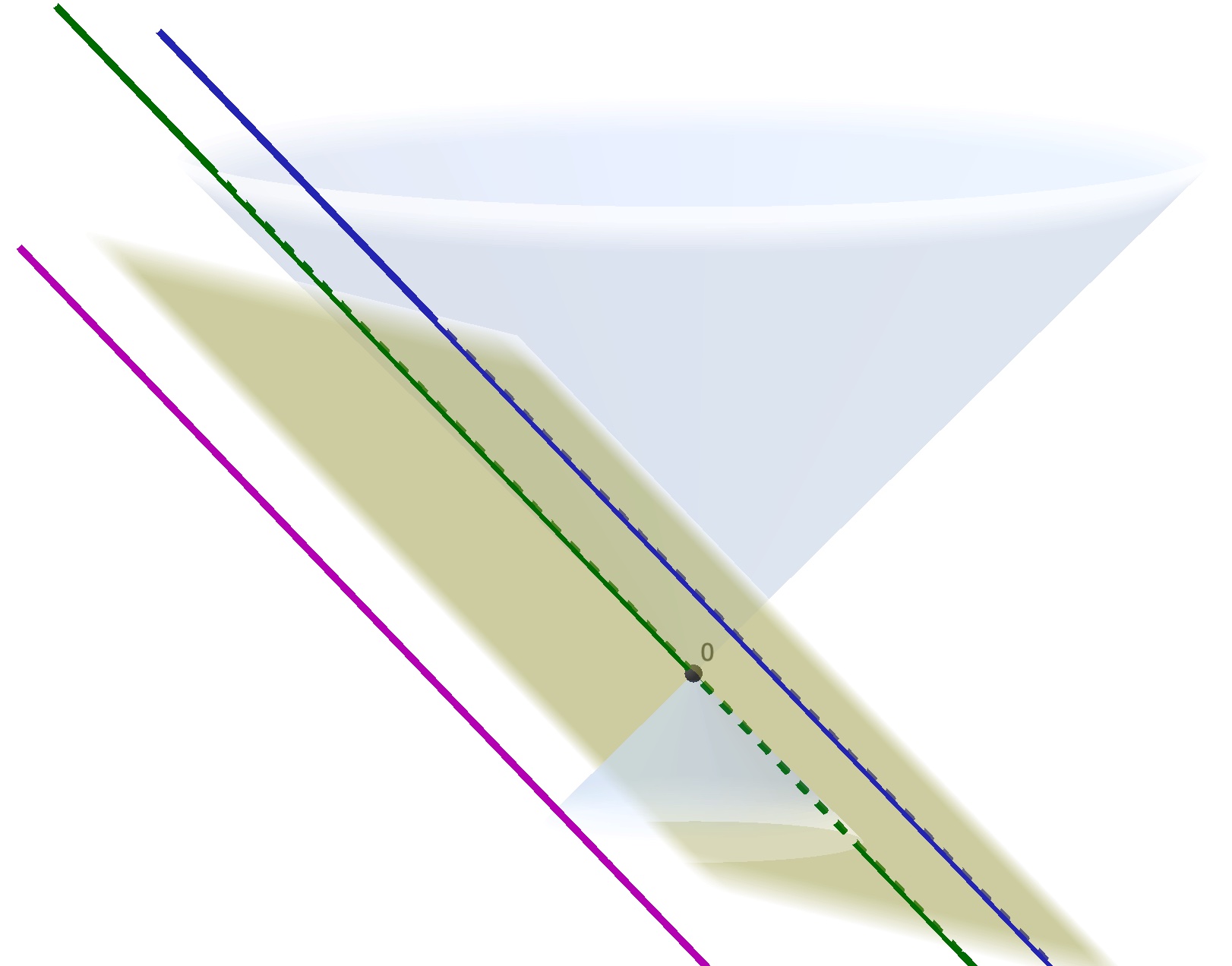}
	\caption{The different types of photons.}
	\label{typesofphoton}
\end{figure}
\begin{itemize}
	\item When $\mathbf v\in \R \mathbf{ v_0}$, the photon $\mathcal L(\mathbf v,\mathbf{v_0})$ is the vectorial line $ \R \mathbf {v_0}$, coloured green in Fig.\ref{typesofphoton}. Its non-zero points correspond to parabolic Killing fields that fix the ideal point $[\mathbf {v_0}]$ in the hyperbolic plane and preserve the horoballs based at this ideal point as sets: for $k\in \R\smallsetminus\{0\}$, $k\mathbf{v_0}\mcp \mathbf{v_0}=0$. So the vector $\mathbf{v_0}$ and hence the set $H(\mathbf v)$ is preserved by the flow of the Killing field associated to $kv_0$. 
	
	\item When $\mathbf v$ is contained in the light-like plane $\du{\mathbf{v_0}}$, the photon also lies inside $\du{\mathbf{v_0}}$. Such a photon is coloured blue in Fig.\ref{typesofphoton}. Any vector $\mathbf u$ on such a photon, that is not contained in $\R\mathbf{v_0}$, is a hyperbolic Killing field with one of its fixed points at $[\mathbf{v_0}]$. We have that $\mathbf u\mcp \mathbf{v_0}\in \du {\mathbf u}\cap\du {\mathbf{v_0}}=\R \mathbf{v_0}$. So the vector $\mathbf{v_0}$ and the set $H$ gets scaled by the flow of the Killing vector field $\mathbf u$. The connected component of the set $\du{\mathbf{v_0}}\smallsetminus\R\mathbf{v_0}$ that contains the hyperbolic Killing fields whose attracting (resp.\ repelling) fixed point is given by $[\mathbf{ v_0}]$ shrinks (resp.\ enlarges) the horoballs centered at this point.
	\item When $\mathbf v\in \Min\smallsetminus\du{\mathbf{v_0}}$, any vector $\mathbf u=\mathbf v+k\mathbf{v_0}\in\mathcal L(\mathbf v,\mathbf{v_0})$ moves the light-like vector away from $\R\mathbf{v_0}$ and in the direction given by $\mathbf u\mcp \mathbf{ v_0}$. Such a photon is coloured in pink in Fig.\ref{typesofphoton}. When $\mathbf v$ lies above (resp.\ below), the point $[\mathbf{ v_0}]$ is moved in the clockwise (resp.\ anticlockwise) direction on $\HPb$. 
\end{itemize}
The space of photons can be identified with the tangent bundle over the space of horoballs, modulo simultaneous scaling of all horoballs.
This identification is equivariant for the actions of $G \ltimes \lalg = T(G)$.

\subsubsection{Handedness}\label{hand}
\begin{figure}
	\centering
	\includegraphics[width=\linewidth]{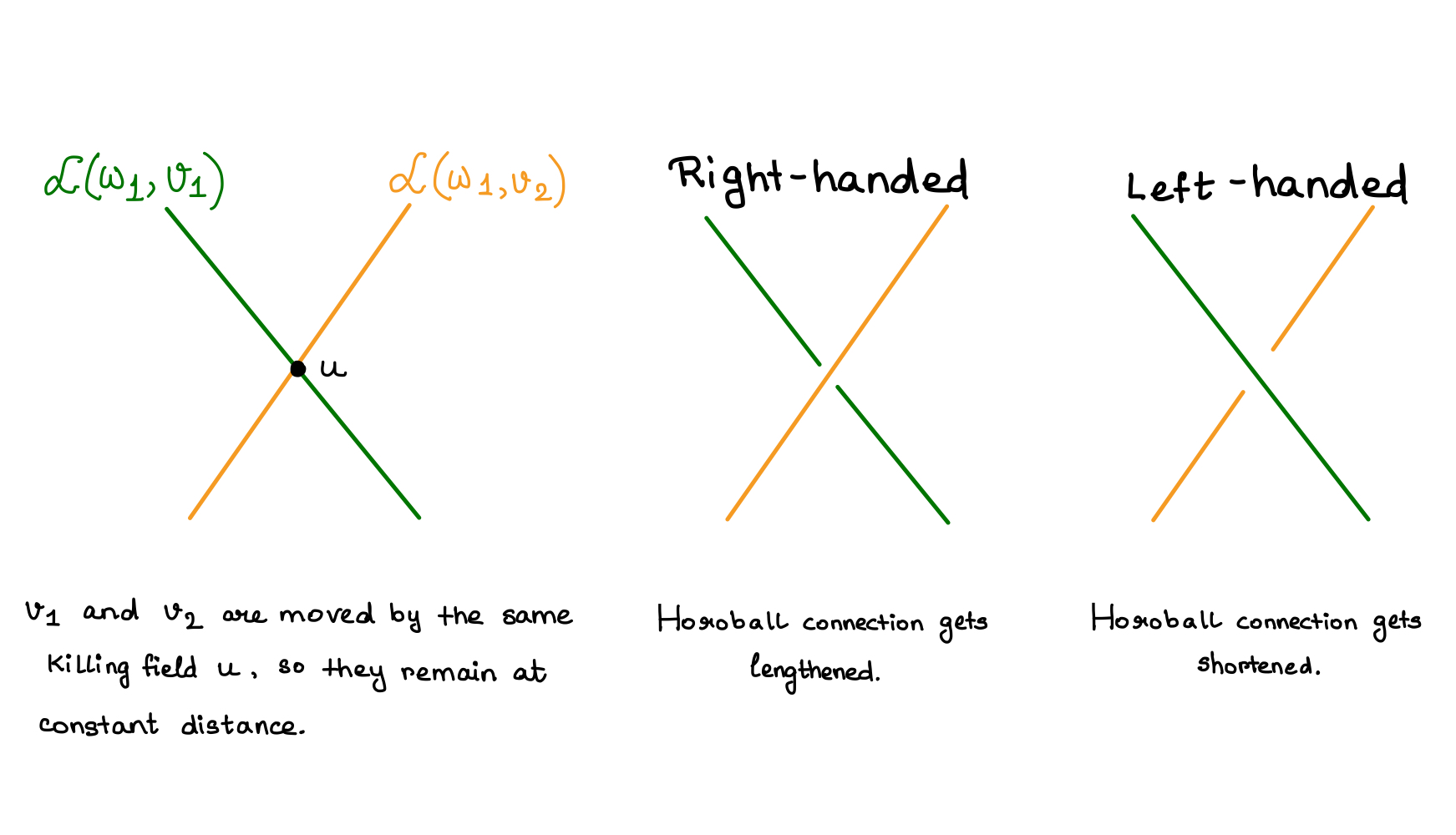}
	\caption{A pair of photons.}
	\label{pairofphotons}
\end{figure}
Let $\mathbf v_1,\mathbf v_2\in \Min$ be two future-pointing light-like vectors. For $i=1,2$, let $\mathbf w_i\in \mathcal{W}^+(\mathbf v_i)$, where $\mathcal{W}^+(\mathbf v_i)$ is the positive wing of $\mathbf v_i$. Then the photon $\mathcal{L}(\mathbf{ w}_i,\mathbf{ v}_i)$ consists of hyperbolic Killing fields that have $[\mathbf v_i]$ as attracting fixed point. So for every $i=1,2$, the vector $\mathbf v_i$ gets infinitesimally deformed towards $k\mathbf v_i$ for $k>1$ and the horoball $h_i:=\mathbb P (H(\mathbf v_i))$ gets shrunken. Finally, consider the pair of photons $\{\mathcal{L}(\mathbf{ w}_1,\mathbf{ v}_1), \mathcal{L}(\mathbf{ w}_2,\mathbf{ v}_2)\}$. Any horoball connection joining the decorated spikes $([\mathbf v_1],h_1)$ and $([\mathbf v_2],h_2)$ gets lengthened.

Now let $\{\mathcal{L}(\mathbf{ w}_1,\mathbf{ v}_1), \mathcal{L}(\mathbf{ w}_2,\mathbf{ v}_2)\}$ be any pair of disjoint photons. They are contained in the two affine light-like planes $A_1:=\mathbf w_1+\du{\mathbf v_1},A_2:=\mathbf w_2+\du{\mathbf v_2}$, respectively.
Let $\mathbf v\in \mathcal{L}(\mathbf{w}_1,\mathbf{v}_1)\cap A_2$ 
and $\mathbf v'\in\mathcal{L}(\mathbf{w}_2,\mathbf{v}_2)\cap A_1$. 
Then the relative motions of $\mathbf{v_1}$ and $\mathbf{v_2}$ is given by
\begin{align*}
	\mathbf v-\mathbf v'&=\mathbf w_1-\mathbf w_2 - \frac{\bil{\mathbf w_1}{\mathbf v_2}}{{\bil {\mathbf v_1}{\mathbf v_2}}} \mathbf v_1+\frac{\bil{\mathbf w_2}{\mathbf v_1}}{{\bil {\mathbf v_1}{\mathbf v_2}}}\mathbf v_2\\
	&=\frac{ \bil{\mathbf w_1-\mathbf w_2}{\mathbf v_1 \mcp \mathbf v_2}}{\norm {\mathbf v_1 \mcp \mathbf v_2}^2} (\mathbf v_1\mcp \mathbf v_2).
\end{align*}
The sign of the real number $\bil{\mathbf w_1-\mathbf w_2}{\mathbf v_1 \mcp \mathbf v_2}$ gives the \emph{handedness} of the pair  $\{\mathcal{L}(\mathbf{ w}_1,\mathbf{ v}_1), \mathcal{L}(\mathbf{ w}_2,\mathbf{ v}_2)\}$.

\subsection{From decorated surfaces to decorated Margulis spacetimes}
In this section, we will adapt the parametrisation of Margulis spacetimes to our case of hyperbolic surfaces with decorated spikes. We start by defining decorated Margulis spacetimes.

Let $\sh$ be a hyperbolic surface with $Q$ decorated spikes, endowed with a decorated metric $m=[\rho,\mathbf{x},\mathbf h]$ in $\tei \sh$. Then the metric on the convex core $S_c$ is given by $[\rho]$. The admissible cone $\adm m$ is an affine bundle over the admissible cone $\adm{[\rho]}$ of the convex core; denote by $\pi$, the bundle projection $\pi:\adm m \longrightarrow \adm{[\rho]}$. The fibres are open subsets of $\R^{2Q}$ that are stable under the scaling of horoballs.  

Let $[u]\in \adm m$ be an admissible deformation of the surface $\sh$. Let $[u_0]:=\pi([u])$. Then $u_0$ is a proper $\rho$-cocycle and the group of isometries $\Gamma^{(\rho,u_0)}$ acts properly discontinuously on $\Min$. The quotient $M:=\Min/\Gamma^{(\rho,u_0)}$ is a Margulis spacetime, which we decorate with photons in the following way: 
the infinitesimal deformation $[u]$ imparts motion to every lift of each decorated spike of the surface. From the previous section, we know that set of Killing fields realising this particular variation to an ideal point decorated with a horoball, happens to be a photon. This collection of photons, denoted by $\mathscr L$, is $\Gamma^{(\rho,u_0)}$-equivariant and is the decoration of the underlying Margulis spacetime. The pair $(M, \mathscr L)$ is called a \emph{decorated} Margulis spacetime. 

Next we will give another way of looking at this decoration using tile maps. 
We know that the projectivised strip map when restricted to the pruned arc complex is a homeomorphism onto its image $\adm m$. So there exists a point $x=\sac \sh$ and a unique big simplex $\sigma$ such that $\mathbb{P}f(x)=[u]\in \mathbb{P}^+ \adm m$ and $x\in \inte\sigma$. So $x=\sum_i t_i [\al_i]$ with $\sum_i t_i =1$, $t_i>0$ for every $i$ and $f(x)=\sum_i t_i f_{\al_i}(m)$. Corresponding to this linear combination of strip maps we get a class of tile maps $\phi:\ltile\longrightarrow \lalg$. Now suppose that the surface has $Q$ spikes and write the spike vector $\mathbf x$ as $(x_1,\ldots,x_Q)$. Since the arcs of $\sigma$ decompose the surface into tiles with at most one spike, there exist exactly $Q$ tiles $d_1,\ldots,d_Q$ such that $x_i\in d_i$ for every $i=1,\ldots, Q$. Using the tile map, we get a collection of $Q$ Killing fields $\phi(d_1),\ldots, \phi(d_Q)$ where $\phi(d_i)$ acts on the ideal point $x_i$. Now suppose that $\mathbf h=(h_1,\ldots,h_Q)$ is the horoball decoration given by the metric $m$. Then for each $i=1,\ldots,Q$, there exists a future pointing light-like vector $\mathbf {v}_i$ and the set $H(\mathbf {v}_i)$ such that $x_i=[\mathbf{v}_i]$ and $h_i=[H(\mathbf{v}_i)]$. Then consider the collection of photons of the form $\phi(d_i)+ \R \mathbf{v}_i$ for $i=1, \ldots, Q$ and take their $\Gamma^{(\rho,u_0)}$-orbit. 

\begin{remark}
	Note that these photons are pairwise disjoint. If two photons intersect, their intersection point is a Killing field that realizes the motions of the two corresponding horoballs, hence the horoball connection has zero infinitesimal length variation. Hence the infinitesimal deformation $[u]$ fails to be admissible.
\end{remark}


\begin{remark}
	Every pair of photons has the same handedness, because every horoball connection is lengthened.
\end{remark}

\subsubsection{From decorated Margulis space-times to admissible deformations.}  Let $\Ga$ be a finitely generated free discrete group acting properly discontinuously on $\Min$ and its representation $\rho:\Ga\longrightarrow G\ltimes \lalg$. Let $(M:=\Min/\rho(\Ga), \mathscr L)$ be a decorated Margulis spacetime with convex cocompact linear part $\rho_0:\Ga\longrightarrow G$. Using Drumm's construction of proper cocycles, we have that $\Ga=\Gamma^{(\rho_0,u_0)}$, where $u_0$ is a proper $\rho_0$-cocycle. The surface $S_c:=\HP/\rho_0(S_c)$ is compact with totally geodesic boundary. Denote its boundary components by $b_1,\ldots,b_n$. 

The set $\mathscr L$ is $\Gamma^{(\rho_0,u_0)}$-equivariant; there exists finitely many pairs $(\mathbf{w}_1, \mathbf{v}_1),\ldots, (\mathbf{w}_Q, \mathbf{v}_Q)$ of points in $\Min$ such that for every $i$, the vector $\mathbf{v}_i$ is future-pointing and light-like and $\mathscr L$ is generated by the photons $\mathcal L_i:=\mathcal L(\mathbf w_i, \mathbf v_i)$, $i=1,\ldots, Q$. This gives us $Q$ ideal points $x_i=[\mathbf{v}_i]\in \HPb$. Take the $\rho_0(\Ga)$-orbit of this collection and join every consecutive pair, that lie on the same side of a lift of the boundary loop $b_i$, by a geodesic. Let $R$ be the simply-connected region in $\HP$ bounded these geodesics. Then we get a hyperbolic surface with decorated spikes $\sh:=R/\rho_0(\Ga)$ with the metric $m=[\rho_0,\mathbf{x}, \mathbf{h} ]$, where $\mathbf {x}=(x_1,\ldots, x_Q)$ and $\mathbf {h}=(h_1,\ldots,h_Q)$, $h_i= \mathbb P (H(\mathbf{ v}_i))$. The surface $S_c$ is the convex core of $\sh$.

The admissible deformation of $\sh$ is determined in the following way: for every $i=1,\ldots, Q$, the photon $\mathcal L_i$ imparts infinitesimal motion to the spike $x_i$ as well as the horoball $h_i$ in the sense that $\mathcal L_i$ is exactly the set of Killing fields all of whom cause $h_i$ to vary in a certain infinitesimal way. Since no two photons intersect, every horoball connection is deformed and since every pair of photons has the same handedness, every horoball connection gets lengthened. Thus we get an admissible deformation $[u]\in \adm m$ with $\pi([u])=[u_0]$. 

\subsubsection{From admissible deformations to decorated Margulis space-times.}
Let $\sh$ be a hyperbolic surface with $Q$ decorated spikes, endowed with a decorated metric $m$, which is of the form $m=[\rho,\mathbf{x},\mathbf h]\in \tei \sh$. Let $[u]\in \adm m$ be an admissible deformation of the surface $\sh$. Let $[u_0]:=\pi([u])$. Then $u_0$ is a proper $\rho$-cocycle and the group of isometries $\Gamma^{(\rho,u_0)}$ acts properly discontinuously on $\Min$. By Theorem \ref{maindec} there exists a unique point $x=\sac \sh$ and a unique big simplex $\sigma$ such that $\mathbb{P}f(x)=[u]\in \mathbb{P}^+ \adm m$ and $x\in \inte\sigma$. So $x=\sum_i t_i [\al_i]$ with $\sum_i t_i =1$, $t_i>0$ for every $i$ and $f(x)=\sum_i t_i f_{\al_i}(m)$. Corresponding to this linear combination of strip maps, we get a class of tile maps $\phi:\ltile\longrightarrow \lalg$ that are $(\rho(\fg{S_c}),u)$-equivariant. Let $\al\in \ed$ be any arc of $\sigma$ and $\wt \al$ be any lift. There exists tiles $d_1,d_2\in\ltile$ that have $\wt \al$ as their common internal edge. The arc is either finite or joins a decorated spike with a bounary component.  Let $\phi(d_2)-\phi(d_1)$ be the Killing field that represents the term $t_\al f_{\al}(m)$ in $f(x)$. When $\al$ is finite, the difference is a hyperbolic Killing field belonging to the stem quadrant $\sq$. Otherwise, it is a parabolic Killing field fixing the ideal endpoint of $\wt\al$. Define the associated crooked plane as before: 
$\mathcal{P}_{\wt\al}:=\mathcal P(\mathbf {w}_{\wt\al},\mathbf {v}_{\wt\al})$, with $\mathbf {w}_{\wt\al}:=\frac{\phi(d_1)+\phi(d_2)}{2}$. Let $R$ be a fundamental domain of the surface $\sh$ bounded by some arcs $e_1,f_1,\ldots, e_k,f_k$ in $\led$ such that there exists $\ga_1,\cdots,\ga_k\in \fg{\sh}$ such that for $i=1,\cdots,k$, $f_i=\rho(\ga_i)\cdot e_i$. Since there are no parabolic elements in $\rho(\fg{\sh})$, nor any spiralling arcs, for every pair $(e_i,f_i)$ of spike-to-edge arcs, the spikes are distinct. So there exists an edge-to-edge arc $\al\in \ed$ whose lift $\wt\al$ separates $e_i$ from $f_i$. Since the arc $\wt\al$ is disjoint to both $e_i$ and $f_i$ in $\cHP$, its associated crooked plane $\mathcal{P}(\mathbf{w}_{\wt\al}, \mathbf{v}_{\wt\al})$ separates the crooked planes $\mathcal P_{e_i}, \mathcal P_{f_i}$. Hence we have that for every $i=1,\ldots, k$, the crooked planes $\mathcal P_{e_i}, \mathcal P_{f_i}$ are disjoint and $(\rho(\ga_i), u_0(\ga_i))\mathcal P_{e_i}=\mathcal P_{f_i}$. The region $\mathcal D$ bounded by these crooked planes is a fundamental domain for the action of $\Gamma^{(\rho,u_0)}$ on $\Min$. 

Thus we have proved the following theorem:
\begin{theorem}
	Let $\sh$ be a hyperbolic surface with decorated spikes and let $\rho:\fg {\sh} \rightarrow \pgl$ be a holonomy representation.
	Let $\mst$ be the space of all Margulis spacetimes with convex cocompact linear part as $\rho$. Then there is a bijection $\Psi: \sac\sh \rightarrow\mst$.
\end{theorem}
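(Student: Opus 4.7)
The plan is to define $\Psi$ as the composition of two bijections: the parametrisation theorem $\mathbb{P}f\colon\sac\sh\to\mathbb{P}^+(\adm m)$ from Theorem~\ref{maindec}, followed by a map $\adm m\to\mst$ constructed out of tile maps and crooked planes, in the manner of Drumm and of \cite{dgk}. Concretely, given $x\in\sac\sh$, write $[u]:=\mathbb{P}f(x)\in\mathbb{P}^+(\adm m)$, let $[u_0]:=\pi([u])$ be its image in the admissible cone of the convex core, and define $\Psi(x):=(M,\mathscr L)$ where $M:=\Min/\Gamma^{(\rho,u_0)}$ and $\mathscr L$ is the $\Gamma^{(\rho,u_0)}$-orbit of the photons $\mathcal L_i:=\phi(d_i)+\R\mathbf v_i$ obtained from the tile map $\phi$ at the $Q$ tiles $d_1,\dots,d_Q$ containing the lifts of the decorated spikes.

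First I would check that the recipe is well defined. By Theorem~\ref{maindec} the simplex $\sigma$ with $x\in\inte\sigma$ and the coefficients $t_i$ are uniquely determined, hence so is the cohomology class $[u]=\sum_i t_i\,f_{\al_i}(m)$ and, modulo coboundaries, the tile map $\phi$. Since $[u_0]\in\adm{[\rho]}$, the theorem of Goldman–Labourie–Margulis guarantees that $\Gamma^{(\rho,u_0)}$ acts properly on $\Min$, so $M$ is a Margulis spacetime. The collection of photons $\mathcal L_i$ is pairwise disjoint: any intersection would exhibit a Killing field simultaneously realising the infinitesimal motion of both endpoint horoballs of a horoball connection, forcing the length derivative to vanish and contradicting admissibility. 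A similar computation shows that every pair $(\mathcal L_i,\mathcal L_j)$ has the same handedness, because each horoball connection is lengthened by $[u]$ (Theorem~\ref{lenderiv}). This takes care of the decoration.

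Next I would build a crooked-plane fundamental domain to verify that $\Psi(x)$ depends only on $x$ (not on the tile map chosen in its cohomology class) and that the output lies in $\mst$. For each lifted arc $\wt\al\in\led$ separating tiles $d_1,d_2$, set $\mathbf w_{\wt\al}:=\tfrac12(\phi(d_1)+\phi(d_2))$ and let $\mathbf v_{\wt\al}$ be the Killing field realising the strip deformation across $\wt\al$; the crooked plane $\mathcal P_{\wt\al}:=\mathcal P(\mathbf w_{\wt\al},\mathbf v_{\wt\al})$ (replaced by its parabolic analogue when $\wt\al$ is spike-to-edge) is translated equivariantly to the rest of its orbit. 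Using Drumm's disjointness criterion via stem quadrants, combined with the fact that any two disjoint arcs of $\ed$ are separated by a further edge-to-edge arc of $\ed$, one concludes that crooked planes indexed by the boundary arcs of a fundamental polygon $R\subset\HP$ bound a fundamental polyhedron in $\Min$. This is the point where the hypothesis that $\sigma$ is filling and that $\rho$ is convex cocompact (no parabolic identifications between the pieces $e_i,f_i$) is used crucially.

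Finally, I would construct $\Psi^{-1}$. Given $(M,\mathscr L)\in\mst$, enumerate generators $\mathcal L_i=\mathcal L(\mathbf w_i,\mathbf v_i)$ of $\mathscr L$; the future-pointing light-like vectors $\mathbf v_i$ recover $Q$ decorated ideal points, and together with the $\rho$-orbit of the axes of $\rho(b_j)$ they bound a simply-connected region $R\subset\HP$ whose quotient is the decorated surface $\sh$ carrying the metric $m=[\rho,\mathbf x,\mathbf h]$. Reading off the infinitesimal horoball motions prescribed by the photons gives a well-defined class $[u]\in\adm m$ projecting to $[u_0]$; the common handedness translates into the uniform lengthening condition, and pairwise disjointness of the photons into the admissibility inequality. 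Then $\Psi^{-1}(M,\mathscr L):=(\mathbb{P}f)^{-1}([u])$ lies in $\sac\sh$ by Theorem~\ref{maindec}. Checking that the two constructions invert each other reduces to the already-established bijection $\mathbb{P}f$ and the remark that a pair (tile map, light-like vectors) determines and is determined by the collection of photons. The main obstacle, as in \cite{dgk}, is verifying the Drumm disjointness for crooked planes across arbitrary disjoint arcs of $\ed$; the added difficulty here is that parabolic fields replace hyperbolic ones along spike-to-edge arcs, so the separation argument must be recast using the fact that the stem of a parabolic crooked plane degenerates into the light-like line fixing the corresponding spike, lying in the common boundary of the two adjacent tiles.
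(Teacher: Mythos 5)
Your proposal follows essentially the same route as the paper: compose the parametrisation homeomorphism of Theorem~\ref{maindec} with the correspondence between admissible deformations and photon-decorated Margulis spacetimes, obtaining the photons from the tile map at the $Q$ spike-containing tiles, deducing their disjointness and common handedness from admissibility, building the fundamental domain from crooked planes attached to the arcs of the filling simplex, and inverting by reading the decorated metric and the class $[u]$ back off the photons. Even your closing concern about separating parabolic (spike-to-edge) crooked planes is resolved exactly as in the paper, by interposing an edge-to-edge arc of $\ed$ between any two identified spike-to-edge arcs of the fundamental polygon.
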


\begin{acknowledgements}
This work was done during my PhD at Universit\'e de Lille from 2017-2020 funded by the AMX scholarship of Ecole Polytechnique. I would like to thank my thesis advisor Fran\c{c}ois Gu\'eritaud for his valuable guidance and extraordinary patience. I am grateful to my thesis referees Hugo Parlier and Virginie Charette for their helpful remarks. I am also grateful to University of Luxembourg for funding my postdoctoral research (Luxembourg National Research Fund OPEN grant O19/13865598).
\end{acknowledgements}

\bibliography{DecoMarg.bib}
\bibliographystyle{spmpsci}
\end{document}